        \numberwithin{equation}{section}
        \theoremstyle{plain}
        \newtheorem{theorem}[equation]{Theorem}
        \newtheorem{corollary}[equation]{Corollary}
        \newtheorem{lemma}[equation]{Lemma}
        \newtheorem{proposition}[equation]{Proposition}
        \newtheorem{maintheorem}{Theorem}
        \theoremstyle{definition}
        \newtheorem{definition}[equation]{Definition}
        \newtheorem{example}[equation]{Example}
        \newtheorem*{example*}{Example}
        \theoremstyle{remark}
        \newtheorem{remark}[equation]{Remark}
        \newtheorem*{remark*}{Remark}
        \newtheorem*{claim*}{Claim}
        \def\namedlabel#1#2{\begingroup
        \def\@currentlabel{#2}%
        \label{#1}\endgroup
        }
        \newcommand{\suchthat}{\,:\,}
        \newcommand{\itemref}[1]{\eqref{#1}}
        \newcommand{\C}{\mathbb{C}}
        \newcommand{\Z}{\mathbb{Z}}
        \newcommand{\Q}{\mathbb{Q}}
        \newcommand{\Orb}{\mathcal{O}}   
       \DeclareMathOperator{\spec}{Spec} 
           \newcommand{\Aff}{\mathbb{A}}
        \newcommand{\MOD}{\mathsf{Mod}}    
        \DeclareMathOperator{\Def}{Def}
        \DeclareMathOperator{\coker}{coker}
         \DeclareMathOperator{\Hom}{Hom}
        \DeclareMathOperator{\Ext}{Ext}
        \DeclareMathOperator{\Tor}{Tor}
        \newcommand{\COHO}[1]{\mathcal{H}^{{#1}}}
        \newcommand{\trunc}[1]{\tau^{{#1}}}
        \newcommand{\RDERF}{\mathsf{R}}
        \newcommand{\LDERF}{\mathsf{L}}
        \newcommand{\DCAT}{\mathsf{D}}
        \newcommand{\RHom}{\RDERF\!\Hom}
        \newcommand{\SHom}{\mathcal{H}om}
        \newcommand{\SRHom}{\RDERF\SHom}
        \newcommand{\QCOH}{\mathsf{QCoh}}
        \newcommand{\COH}{\mathsf{Coh}}
        \newcommand{\PERF}{\mathsf{Perf}}
        \renewcommand{\bar}[1]{\overline{{#1}}}
        \DeclareMathOperator{\Aut}{Aut}
        \DeclareMathOperator{\End}{End}
        \newcommand{\tensor}{\otimes}
        \newcommand{\opp}{\circ}
\numberwithin{equation}{section}
\newcommand{\qcsubscript}{\mathrm{qc}} 
\newcommand{\DQCOH}[1][]{\DCAT_{\qcsubscript{#1}}} 
\newcommand{\GL}{\mathrm{GL}} 
\newcommand{\Ga}{\mathbb{G}_a} 
\newcommand{\hocolim}[1]{\underset{#1}{\mathrm{hocolim}}\,}
\newcommand{\shfcoho}{\mathrm{H}}
\newcommand{\Vect}{\mathsf{Vect}}
\newcommand{\spref}[1]{\href{http://stacks.math.columbia.edu/tag/#1}{#1}}
\newcommand{\labitem}[2]{%
\def\@itemlabel{(\textbf{#1})}
\item
\def\@currentlabel{\textbf{#1}}\label{#2}}
\title[Bondal--Orlov for Deligne--Mumford stacks]{A generalized Bondal--Orlov full faithfulness criterion for Deligne--Mumford stacks}
\date{May 9, 2024}
\author[J. Hall]{Jack Hall}
\email{jack.hall@unimelb.edu.au}
\address{School of Mathematics \& Statistics\\
         The University of Melbourne\\
         Parkville, VIC, 3010, Australia}
\author[K. Priver]{Kyle Priver}
\subjclass[2020]{ Primary 14F06, 14F08; secondary 14A20.  }
\keywords{
        Derived categories, algebraic stacks
         }
\theoremstyle{plain}
\newtheorem*{theorem*}{Theorem}
\renewcommand{\mathbb}{\mathbf}
    \newcommand{\Lift}{\mathsf{Lift}}
    \setlist[enumerate]{font=\upshape}
\begin{document}
\maketitle
\begin{abstract}
  Let $X$, $Y$ be smooth projective varieties over $\mathbf{C}$. Let $K$
be a bounded complex of coherent sheaves on $X\times Y$ and let
$\Phi_K \colon \mathsf{D}^b_{\mathsf{Coh}}(X) \to
\mathsf{D}^b_{\mathsf{Coh}}(Y)$ be the resulting Fourier--Mukai
functor. There is a well-known criterion due to Bondal--Orlov for
$\Phi_K$ to be fully faithful. This criterion was recently extended to
smooth Deligne--Mumford stacks with projective coarse moduli schemes
by Lim--Polischuk. We extend this to all smooth, proper
Deligne--Mumford stacks over arbitrary fields of characteristic
$0$. Along the way, we establish a number of foundational results for
bounded derived categories of proper and tame morphisms of noetherian
algebraic stacks (e.g., coherent duality).

\end{abstract}
\section{Introduction}
Let $X$ be a noetherian scheme, algebraic space, or algebraic
stack. Let $F$ be a triangulated functor from $\DCAT^{b}_{\COH}(X)$,
the bounded derived category of coherent sheaves on $X$, to a
triangulated category $\mathsf{T}$. There has been extensive study of
full faithfulness criteria for such $F$. The most famous is likely the
criterion due to Bondal and Orlov (see
\cite[Thm.~1.1]{bo-semiorthogonal} and \cite[Thm.~5.1]{MR1651025}) for
smooth projective schemes over an algebraically closed field of
characteristic $0$. Our main result is a generalization of this to
Deligne--Mumford stacks.
\begin{maintheorem}\label{T:bo-field}
  Let $X$ and $Y$ be smooth, proper, Deligne--Mumford stacks over a
  field $k$ of characteristic $0$.  Let
  $K \in \DCAT^b_{\COH}(X\times_kY)$ and let
  \[
    F=\Phi_K \colon \DCAT^b_{\COH}(X) \to \DCAT^b_{\COH}(Y) \colon M \mapsto \RDERF q_*(K \tensor^{\LDERF}_{\Orb_{X\times_k Y}} \LDERF p^*M),
  \]
  be the resulting Fourer--Mukai functor, where
  $p \colon X\times_k Y \to X$ and $q \colon X\times_k Y \to Y$ are
  the projections. Then $F$ is fully faithful if and only if for each
  pair of generalized closed points $(x,\xi)$, $(x,\xi')$ of $X$:
  \[
    \Hom_{\Orb_{X}}(\kappa(x,\xi),\kappa(x,\xi')) \simeq \Hom_{\Orb_{Y}}(F(\kappa(x,\xi)),F(\kappa(x,\xi')));
  \]
  and for each pair of generalized closed points $(x_1,\xi_1)$,
  $(x_2,\xi_2)$ of $X$ and integer $i$:
    \[
      \Hom_{\Orb_{Y}}(F(\kappa(x_1,\xi_1)),F(\kappa(x_2,\xi_2))[i]) =0,
    \]
    unless $x_1\simeq x_2$ and $0\leq i \leq \dim (X)$.
\end{maintheorem}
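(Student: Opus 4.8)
The plan is to reduce Theorem~\ref{T:bo-field} to the already-known case over $\mathbf{C}$ (Lim--Polishchuk) by a series of base-change and descent arguments, with the new technical input being the behaviour of bounded derived categories under proper tame morphisms. First I would establish the ``only if'' direction, which is essentially formal: if $F$ is fully faithful, then applying $F$ to the structure complexes $\kappa(x,\xi)$ of generalized closed points and using that these generate $\DCAT^b_{\COH}(X)$ (a point-support argument, together with the fact that $X$ is a noetherian Deligne--Mumford stack, so residue gerbes exist and their pushforwards to $X$ are the $\kappa(x,\xi)$) immediately forces the $\Hom$-isomorphisms and the vanishing ranges stated. The content is in the ``if'' direction.

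For the ``if'' direction the strategy is: (1) reduce to $k$ algebraically closed by faithfully flat base change along $k \to \bar k$, checking that $F$, the product $X\times_k Y$, smoothness, properness and tameness are all stable under this base change, and that full faithfulness of $F$ descends along $\bar k/k$ (using that $\RDERF\Hom$ of bounded coherent complexes commutes with flat base change on a proper tame stack---here is where the foundational coherent-duality and finiteness results quoted in the abstract are invoked); similarly the hypotheses on generalized closed points are unaffected since every generalized closed point of $X$ pulls back to a (finite, nonempty) set of generalized closed points of $X_{\bar k}$, and conversely. (2) Reduce from an arbitrary algebraically closed field of characteristic $0$ to $\mathbf{C}$: spread out $X$, $Y$, $K$ over a finitely generated $\mathbf{Z}$-subalgebra, use a Lefschetz-principle / generization argument to transfer full faithfulness to and from $\mathbf{C}$, again using flatness of the relevant $\RDERF\Hom$ formation and the fact that the conditions on generalized closed points are encoded in the (constructible) cohomology sheaves of $\RDERF\SHom(F(-),F(-))$. (3) Invoke Lim--Polishchuk once $X$, $Y$ are smooth proper Deligne--Mumford stacks over $\mathbf{C}$; this requires only that such a stack has a projective coarse moduli space, which holds since a smooth proper DM stack over a field has projective coarse space (it is proper, and one produces an ample line bundle on the coarse space using the tame/linearly reductive structure, or cites the known result).

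The key structural step common to (1) and (2)---and the one I expect to be the main obstacle---is proving that for a proper tame morphism $f\colon Z \to S$ of noetherian algebraic stacks, the formation of $\RDERF f_* \RDERF\SHom_{\Orb_Z}(M, N)$ for $M, N \in \DCAT^b_{\COH}(Z)$ commutes with arbitrary base change $S' \to S$, together with the coherence of these complexes. Over schemes this is Grothendieck's theory of relative duality and cohomology-and-base-change; over tame stacks one must show the pushforward $\RDERF f_*$ is bounded and coherence-preserving (this uses tameness: $f_*$ is exact on quasi-coherent sheaves after passing to the coarse space, so cohomological amplitude is controlled) and that a dualizing complex exists and is compatible with base change. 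I would build this by first reducing to the coarse-moduli morphism composed with a proper morphism of algebraic spaces, handling the coarse-moduli morphism via the tame/linearly-reductive hypothesis and the projection formula, and handling the algebraic-space part by the known scheme/space theory (dévissage to the scheme case via a Nisnevich or \'etale cover). Once this base-change statement is in hand, descent of full faithfulness along $\bar k/k$ and along the spreading-out in (2) is a diagram chase: $F$ is fully faithful iff the cone of $\Orb_X \to \RDERF\SHom(F(-),F(-))$ (suitably interpreted via the kernel $K$ and its transpose) vanishes, and vanishing of a bounded coherent complex can be checked after faithfully flat base change and, via constructibility, after passing to a dense point.

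Finally I would remark that the reduction also needs the statement that the generators $\kappa(x,\xi)$ of $\DCAT^b_{\COH}(X)$ are respected under these operations: a full faithfulness criterion ``on generators'' is equivalent to full faithfulness on all of $\DCAT^b_{\COH}$ by a standard thick-subcategory argument (the full subcategory of objects $M$ such that $F$ is fully faithful on $\Hom(M,-)$ is thick and contains the $\kappa(x,\xi)$, which generate), and this equivalence is itself stable under base change because ``thick subcategory generated by the point-complexes'' is, by Thomason-style arguments on the noetherian DM stack, all of $\DCAT^b_{\COH}$. The one subtlety I will need to be careful about is that ``generalized closed point'' accounts for the residue gerbe---its coarse point $x$ has residue field $\kappa(x)$ possibly inseparable-free (characteristic $0$, so separable) but the gerbe may be nontrivial---so $\kappa(x,\xi)$ is the pushforward of a simple module over the residual gerbe; I will check that these behave well under $\bar k/k$ and spreading out, which they do since forming residue gerbes commutes with flat base change.
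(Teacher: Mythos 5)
Your outline breaks at step~(3), and the break is fatal. The claim that a smooth proper Deligne--Mumford stack over a field of characteristic $0$ has projective coarse moduli space is false: Hironaka's examples already give smooth proper non-projective \emph{schemes} over $\mathbf{C}$, and the class of proper algebraic spaces is broader still. This is precisely why the theorem is not a routine corollary of Lim--Polishchuk \cite{MR4280492} --- the introduction points out it is new even for algebraic spaces. Moreover \cite{MR4280492} requires, in addition to projective coarse moduli, the a priori existence of left and right adjoints to $F$ with certain properties; producing these adjoints in the proper-but-not-projective setting is nontrivial (it is where the coherent duality and finiteness results of the paper do real work). The Lefschetz-principle step~(2) is then moot: reducing to $\mathbf{C}$ does not help if the result you need over $\mathbf{C}$ is exactly what remains to be proved.

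The one piece of your plan that does align with the paper is the reduction to $k=\bar k$; this is Lemma~\ref{L:bo-field-bc-alg-clo}, and the device of going back and forth via the ``regular representation'' complexes $\Xi(x)$ (Examples~\ref{E:regular-rep}--\ref{E:reg-rep-bc}) is close in spirit to what you sketch about generalized closed points behaving well under field extension. After that, however, the paper does \emph{not} reduce to any previously known case: it follows Bridgeland's strategy directly. The spanning class is the set of $\kappa(x,\xi)$ (Corollary~\ref{C:spanning}), Bridgeland's criterion \cite[Thm.~2.3]{MR1651025} reduces full faithfulness to checking on these objects, Corollary~\ref{C:is-sheaf} shows $GF(\kappa(x,\xi))$ is a sheaf supported at $x$, and then --- the heart of the argument, and the part your proposal has no substitute for --- one shows the kernel $C$ of $\varepsilon\colon Q\to(\Delta_X)_*\Orb_X$ is generically zero via the smooth map $X\to\underline{\COH}_{X/k}$ (Proposition~\ref{P:key}) and a Kodaira--Spencer injectivity argument (Remark~\ref{R:KS}, Proposition~\ref{P:ks-functorial}), then uses flatness of $C$ over $X$ (Lemma~\ref{L:bridgeland-nakayama}) to conclude $C=0$. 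No projectivity, Hilbert scheme, or Lefschetz principle enters. If you want to salvage a reduction-style argument, you would at minimum need an independent proof in the proper non-projective case over $\mathbf{C}$, which is not available.
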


A variant of Theorem \ref{T:bo-field} was recently proved for
Deligne--Mumford stacks with projective coarse moduli space over an
algebraically closed field of characteristic $0$ \cite{MR4280492},
provided left and right adjoints to the functor $F$ exist and have
certain properties. 
In particular, Theorem \ref{T:bo-field} is even new for algebraic spaces.

While \cite{MR4280492} used non-commutative geometry to reduce to the
orbifold case and then argued using Hilbert schemes, we proceed more closely to Bridgeland's approach. This requires some care with \emph{generalized points} (see
\S\ref{S:gen-points}), which were introduced in \cite{MR4280492} over
algebraically closed fields. A generalized point of an algebraic stack
simultaneously generalizes the residue field of an algebraic space and
an irreducible representation of a group. More precisely, a
generalized point of a quasiseparated algebraic stack $X$ is a pair
$(x,\xi)$ such that $x\colon \spec l\to X$ is a morphism, where $l$ is
a field, and $\xi$ is a simple object of the category of quasicoherent
sheaves on the residual gerbe $i_x \colon \mathcal{G}_x \subseteq X$
associated to $x$. We set $\kappa(x,\xi) = (i_x)_*\xi \in \QCOH(X)$.

Along the way to proving Theorem \ref{T:bo-field} we establish a
number of foundational results such as coherent duality (Theorem
\ref{T:coherent-duality} and Corollary \ref{C:gorenstein-duality}) and
finiteness results (Theorem \ref{T:boundedness-stacks}) for algebraic
stacks with finite diagonal. We use these results---together with
ideas of \cite{MR2964634}---to establish that full faithfulness in
families is an open condition (Corollary \ref{C:ff-open}).

We also establish a number of results on generalized points
(\S\ref{S:gen-points}). We show they form a \emph{spanning class}---in
the sense of Bridgeland \cite[Defn.~2.1]{MR1651025}---for noetherian
algebraic stacks that satisfy the Thomason condition (Corollary
\ref{C:spanning}). By \cite[Thm.~A]{hall2022remarks}, this generalizes
\cite[Prop.~2.1]{MR4280492} to smooth algebraic stacks with
quasi-affine diagonal and also many others
\cite{perfect_complexes_stacks}.

In Appendix \ref{A:retracted-covers}, we establish a criterion for an
algebraic stack to be cohomologically affine (Proposition
\ref{P:retract-cover}) and use this to characterize injectives and
projectives on such stacks (Corollary \ref{C:loc-proj-inj-glob}). This
is a technical result that is used to understand generalized points
for stacks with infinite affine stabilizers.

In Appendix \ref{A:strong-gens}, we establish an analog of
\cite[Thm.~1.1/A.1]{MR1996800} for tame algebraic stacks. While this
is not used in this article, it is closely related to the saturation
and finiteness results discussed in \S\ref{S:finite} (e.g., Corollary
\ref{C:saturated}). It follows relatively easily from ideas of
\cite{Neeman_Approx,aoki2020quasiexcellence,MR3674218} on descendable
morphisms.
\subsection{Acknowledgements}
The first author would like to thank Christian Haesemeyer, Fei Peng, and Amnon Neeman
for productive suggestions, discussions, and encouragement.
\subsection{Assumptions and conventions}
We adhere to the conventions of \cite{stacks-project}; in particular,
we have no separation hypotheses on our algebraic stacks. An algebraic
stack is \emph{locally noetherian} if it admits a smooth cover by a
locally noetherian scheme. An algebraic stack is \emph{noetherian} if
it is quasicompact, quasiseparated, and locally noetherian.

An algebraic stack is said to have the \emph{resolution property} if
it is quasicompact with affine diagonal and every quasicoherent
sheaf is a quotient of a direct sum of vector bundles of finite rank;
by the Totaro--Gross Theorem \cite{MR2108211,2013arXiv1306.5418G},
this is equivalent to being of the form $[U/\GL_n]$, where $U$ is a
quasi-affine scheme.

A flat and affine group scheme of finite presentation over a base $S$
is \emph{linearly reductive} if $\pi\colon BG \to S$ is
\emph{cohomologically affine} \cite{2008arXiv0804.2242A}; that is,
$\pi_* \colon \QCOH(BG) \to \QCOH(S)$ is exact (also see Appendix
\ref{A:retracted-covers}).

A quasicompact and quasiseparated algebraic stack $X$ has
\emph{finite cohomological dimension} if there exists $d_0$ such that
$\shfcoho^d(X,F) = 0$ for all $d>d_0$ and quasicoherent sheaves
$F$. This holds for quasicompact and quasiseparated schemes,
algebraic spaces, tame stacks, and stacks with affine stabilizers in
characteristic $0$
\cite{perfect_complexes_stacks,hallj_dary_alg_groups_classifying}. If
$X$ has finite cohomological dimension, then its unbounded derived
category can have similar properties to the unbounded derived category
of a quasicompact and quasiseparated scheme. For example, the
compact objects $\DQCOH(X)^c$ coincide with the perfect objects
$\PERF(X)$ \cite[Rem.~4.6]{perfect_complexes_stacks} (there is always
the inclusion $\DQCOH(X)^c \subseteq \PERF(X)$).

A morphism of algebraic stacks $f \colon X \to S$ is
\emph{concentrated} if for every morphism $\spec A \to S$ the
algebraic stack $X\times_S \spec A$ has finite cohomological
dimension. Quasicompact and quasiseparated representable/tame
morphisms are concentrated, as are $\Q$-stacks with affine
stabilizers. Due to subtleties with the lisse-\'etale site for
algebraic stacks, we define
$\RDERF f_{\qcsubscript,*} \colon \DQCOH(X) \to \DQCOH(S)$ as a right
adjoint to a certain $\LDERF f^* \colon \DQCOH(S) \to \DQCOH(X)$. If
$f$ is concentrated, then $\RDERF f_{\qcsubscript,*}$ behaves a lot
like familiar $\RDERF f_*$ for schemes. More detail is available in 
\cite[\S2]{perfect_complexes_stacks}.

\section{Finiteness results}\label{S:finite}
Recall that if $f \colon X \to S$ is a proper (resp.~proper and tame)
morphism of noetherian algebraic stacks, then the restriction of
$\RDERF f_{\qcsubscript,*} \colon \DQCOH(X) \to \DQCOH(S)$ to
$\DCAT^+_{\COH}(X)$ (resp.~$\DCAT^b_{\COH}(X)$) factors through
$\DCAT^+_{\COH}(S)$ (resp.~$\DCAT^b_{\COH}(S)$)
\cite[Thm.~1.2]{MR2183251}.
\begin{remark}
  There are non-noetherian versions of this
  for schemes \cite{MR0382280}. If $f$ is flat, then there are also
  non-noetherian finiteness theorems for morphisms of stacks (e.g.,
  argue as in \cite[Tag \spref{0A1H}]{stacks-project}). There are also
  variants in spectral algebraic geometry (e.g.,
  \cite{lurie_sag}). There is also a variant in the non-tame
  situation. We only prove the noetherian case.
\end{remark}
\begin{proposition}\label{P:necessary}
  Let $S$ be a noetherian algebraic stack. Let $f\colon X \to S$ be a proper
  morphism of algebraic stacks with finite diagonal. Let
  $M \in \DCAT^-_{\COH}(X)$ (resp.~$\DCAT^b_{\COH}(X)$). If
  $G \in \DQCOH(X)^c$, then
  $\RDERF f_*\SRHom_{\Orb_X}(G,M) \in \DCAT^-_{\COH}(S)$
  (resp.~$\DCAT^b_{\COH}(S)$).
\end{proposition}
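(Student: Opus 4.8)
The plan is to reduce to the case where $G = \Orb_X$, where the statement becomes the finiteness theorem for $\RDERF f_*$ on the bounded-below (resp.\ bounded) coherent derived category quoted above from \cite[Thm.~1.2]{MR2183251}. First I would observe that since $f$ has finite diagonal, it is in particular concentrated (representable, or at worst tame — but here finite diagonal already makes the residual gerbes finite, so $f$ is concentrated), so $\RDERF f_*$ has the expected properties on $\DQCOH(X)$. Next, because $G \in \DQCOH(X)^c$, it is a perfect complex: on a quasicompact algebraic stack with finite diagonal (hence affine stabilizers), $\DQCOH(X)^c = \PERF(X)$, as recalled in the conventions. Therefore $\SRHom_{\Orb_X}(G,M) \simeq G^{\vee} \ltensor_{\Orb_X} M$, where $G^{\vee} = \SRHom_{\Orb_X}(G,\Orb_X)$ is again perfect. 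So the task is to show that for $N := G^\vee \ltensor M$ we have $\RDERF f_* N \in \DCAT^-_{\COH}(S)$ (resp.\ $\DCAT^b_{\COH}(S)$).

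The key point is that $N$ is again cohomologically bounded above (resp.\ bounded) with coherent cohomology. For the bounded-above case: $G^\vee$ is perfect, hence locally a bounded complex of vector bundles, so $- \ltensor G^\vee$ shifts cohomological amplitude by a bounded amount (smooth-locally, hence globally since amplitude is detected on a smooth cover); since $M \in \DCAT^-_{\COH}(X)$ and each $\Tor$-sheaf of a coherent sheaf against a vector bundle is coherent, we get $N \in \DCAT^-_{\COH}(X)$. For the bounded case one needs, in addition, boundedness below: here I would use that $G^\vee$ is perfect with Tor-amplitude in some finite interval $[a,b]$, so tensoring with it raises the bottom of the cohomological range by at most $-b$; combined with $M$ bounded, $N$ is bounded. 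Then $\RDERF f_* N \in \DCAT^-_{\COH}(S)$ (resp.\ $\DCAT^b_{\COH}(S)$) follows directly from \cite[Thm.~1.2]{MR2183251} applied to the proper morphism $f$ of noetherian algebraic stacks (properness plus finite diagonal gives the noetherian hypotheses once we check $S$, hence $X$, is noetherian — $S$ is assumed noetherian and $f$ is proper, so $X$ is noetherian).

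The main obstacle, such as it is, is bookkeeping the cohomological amplitudes carefully enough to land in the \emph{bounded} (as opposed to merely bounded-above) category in the parenthetical case; this is where one genuinely uses that $G$ is perfect (equivalently compact) rather than just pseudo-coherent, since an unbounded-below complex would ruin boundedness of $N$ and hence of $\RDERF f_*N$. A secondary point to handle with care is that the identification $\SRHom(G,M) \simeq G^\vee \ltensor M$ and the amplitude estimates are all statements that may be checked after pullback along a smooth presentation $U \to X$ by a scheme (for amplitude) and étale-locally (for coherence of Tor-sheaves), using that $\LDERF(U\to X)^*$ is exact, commutes with $\SRHom$ of a perfect complex, and detects coherence and boundedness; none of these reductions is difficult, but they are what make the argument rigorous for stacks rather than schemes.
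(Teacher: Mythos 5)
The proposal goes wrong at the outset: finite diagonal does \emph{not} imply that $f$ is concentrated, nor that $X$ is tame. The standard counterexample is $X = B(\Z/p)$ over a field of characteristic $p$: here the diagonal is finite, but $\shfcoho^i(X,\Orb_X) \neq 0$ for all $i \geq 0$, so $X$ has infinite cohomological dimension. For exactly the same reason the claimed identification $\DQCOH(X)^c = \PERF(X)$ is false in this setting --- it requires finite cohomological dimension (see the paper's conventions and \cite[Rem.~4.6]{perfect_complexes_stacks}), and with only finite diagonal the inclusion $\DQCOH(X)^c \subseteq \PERF(X)$ can be strict. In the example above, $\Orb_X$ is perfect but not compact.

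This matters because your final step --- invoking \cite[Thm.~1.2]{MR2183251} on $N = G^\vee \ltensor M$ --- does not yield what you claim. That theorem sends $\DCAT^+_{\COH}$ to $\DCAT^+_{\COH}$ for proper $f$, and $\DCAT^b_{\COH}$ to $\DCAT^b_{\COH}$ only under the additional hypothesis that $f$ is \emph{tame}. It gives no control of the upper cohomological amplitude of $\RDERF f_* N$ for a general proper $f$ with finite diagonal: again for $X=B(\Z/p)$ in characteristic $p$ and $N=\Orb_X$ (bounded, coherent), $\RDERF f_* N = \RDERF\Gamma(X,\Orb_X)$ is \emph{not} bounded above. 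So the difficulty you identify ("landing in the bounded rather than merely bounded-above category") is the reverse of the real one: given the known finiteness theorem, the issue is boundedness \emph{above}, not below, and it is precisely here that one must use that $G$ is \emph{compact} (not merely perfect).

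The paper resolves this with a uniformity statement special to compact $G$ (\cite[Lem.~4.5(3)]{perfect_complexes_stacks}): there is an $r \geq 0$, depending only on $G$, such that
\[
\trunc{\geq j}\RHom_{\Orb_X}(G,M) \simeq \trunc{\geq j}\RHom_{\Orb_X}(G,\trunc{\geq j-r}M)
\]
for all $j \in \Z$ and $M\in\DQCOH(X)$. This reduces the coherence of each cohomology of $\RHom_{\Orb_X}(G,M)$ to the case of a \emph{bounded} truncation of $M$, where the bounded-below finiteness theorem (which does hold for proper $f$ without tameness) applies; it also immediately gives the vanishing of $\trunc{\geq j}$ for $j\gg0$ when $M$ is bounded above. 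Your argument as written is correct only under the extra hypothesis that $X$ is tame (or more generally that $f$ is concentrated), which is where the paper's proof also begins before addressing the genuinely non-tame situation.
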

\begin{proof}
  Since $S$ is noetherian, this is local on $S$
  \cite[Ex.~3.9]{perfect_complexes_stacks}, so we may assume that
  $S=\spec A$ is affine. If $X$ is tame, then the result follows from
  the usual finiteness result:
  \[
    \RHom_{\Orb_X}(G,M) \simeq \RDERF \Gamma(X,\SRHom_{\Orb_X}(G,M))
    \simeq \RDERF \Gamma(X,G^\vee \otimes^{\LDERF}_{\Orb_X} M).
  \]
  Without tameness this even shows that if
  $M\in \DCAT^b_{\COH}(X)$, then $\RHom_{\Orb_X}(G,M)$ belongs to
  $\DCAT^+_{\COH}(A)$. In general,
  \cite[Lem.~4.5(3)]{perfect_complexes_stacks} implies that there
  exists $r\geq 0$ such that for all $j\in \Z$ and $M \in \DQCOH(X)$:
  \[
    \trunc{\geq j}\RHom_{\Orb_X}(G,M) \simeq \trunc{\geq
      j}\RHom_{\Orb_X}(G,\trunc{\geq j-r}M).
  \]
  Combining this with the above gives the claim.
\end{proof}
\begin{corollary}\label{C:perfect-tor-dim}
  Let $S$ be a noetherian algebraic stack. Let $f\colon X \to S$ be a
  proper morphism of algebraic stacks of finite tor-dimension with
  finite diagonal. If $G \in \DQCOH(X)^c$, then
  $\RDERF f_*G \in \PERF(S)$.
\end{corollary}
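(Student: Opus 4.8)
The plan is to verify that $\RDERF f_*G$ is \emph{pseudo-coherent} and of \emph{finite tor-amplitude} on $S$, since a pseudo-coherent complex of finite tor-amplitude is perfect (and all three properties can be checked locally on $S$). I would first reduce to $S=\spec A$ noetherian affine. As $G$ is compact it is perfect, hence $G\in\DCAT^b_{\COH}(X)$ because $X$ is noetherian, so the finiteness theorem recalled at the start of this section (\cite[Thm.~1.2]{MR2183251}) gives $\RDERF f_*G\in\DCAT^+_{\COH}(S)$: in particular $\RDERF f_*G$ has coherent cohomology, and boundedness above will drop out of the tor-amplitude estimate. (Alternatively, since the dual $G^\vee:=\SRHom_{\Orb_X}(G,\Orb_X)$ is again compact---see below---Proposition \ref{P:necessary} applied to $G^\vee$ and $M=\Orb_X$ yields $\RDERF f_*G\simeq\RDERF f_*\SRHom_{\Orb_X}(G^\vee,\Orb_X)\in\DCAT^b_{\COH}(S)$ at once.)

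For the tor-amplitude, fix $N\in\QCOH(S)$. Since $G$ is perfect the projection formula gives $\RDERF f_*G\ltensor_A N\simeq\RDERF f_*(G\ltensor_{\Orb_X}\LDERF f^*N)$. If $f$ has tor-dimension $\leq c$ and $G$ has tor-amplitude in $[a,b]$, then $\LDERF f^*N$ is concentrated in degrees $[-c,0]$ and hence $G\ltensor_{\Orb_X}\LDERF f^*N$ in degrees $[a-c,b]$; as $\RDERF f_*=\RDERF\Gamma(X,-)$ (for $S$ affine) is left $t$-exact, $\RDERF f_*G\ltensor_A N$ lies in degrees $\geq a-c$. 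For the upper bound I would rewrite $\RDERF f_*(G\ltensor_{\Orb_X}\LDERF f^*N)\simeq\RHom_{\Orb_X}(G^\vee,\LDERF f^*N)$, using that $G^\vee$ is perfect, and then---exactly as in the proof of Proposition \ref{P:necessary}---invoke \cite[Lem.~4.5(3)]{perfect_complexes_stacks} to obtain an $r\geq 0$, depending only on the compact object $G^\vee$, with $\trunc{\geq j}\RHom_{\Orb_X}(G^\vee,P)\simeq\trunc{\geq j}\RHom_{\Orb_X}(G^\vee,\trunc{\geq j-r}P)$ for all $j\in\Z$ and $P\in\DQCOH(X)$; taking $P=\LDERF f^*N$, which sits in degrees $\leq 0$, forces $\RDERF f_*G\ltensor_A N$ into degrees $\leq r$. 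Thus $\RDERF f_*G\ltensor_A N$ has cohomology in the interval $[a-c,r]$, independent of $N$, so $\RDERF f_*G$ has finite tor-amplitude; taking $N=\Orb_S$ shows it is bounded, completing the pseudo-coherence. Hence $\RDERF f_*G$ is perfect.

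The step I expect to be the main obstacle is the claim that $G^\vee$ is again \emph{compact}, not merely perfect. This is genuinely necessary: when $X$ has infinite cohomological dimension (for instance $X=B(\Z/p)$ over $\F_p$) one has $\DQCOH(X)^c\subsetneq\PERF(X)$ and $\RDERF f_*$ of a perfect complex can fail to be bounded, so knowing only that $G$ is perfect would not suffice; it is essential that $G$, and hence $G^\vee$, be compact. What is needed is that $\DQCOH(X)^c$ is closed under the monoidal dual $\SRHom_{\Orb_X}(-,\Orb_X)$; this is either recorded in \cite{perfect_complexes_stacks} or can be checked locally, since (roughly) $X$ is locally a quotient $[\spec B/\Gamma]$ of an affine scheme by a finite flat group scheme $\Gamma$, under which the monoidal dual becomes the $A[\Gamma]$-linear dual, and the latter preserves finitely generated projective modules because the group algebra $A[\Gamma]$ is a Frobenius $A$-algebra. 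Granting this closedness, the remaining steps are a routine combination of Proposition \ref{P:necessary}, the projection formula, and \cite[Lem.~4.5(3)]{perfect_complexes_stacks}.
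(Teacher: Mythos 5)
Your overall strategy agrees with the paper's: reduce to $S=\spec A$ affine, use Proposition~\ref{P:necessary} (applied to $G^\vee$) to get boundedness and coherence, and then use the projection formula together with the tor-amplitude bounds on $G$ and on $f$, plus the uniform estimate from \cite[Lem.~4.5]{perfect_complexes_stacks}, to bound the tor-amplitude of $\RDERF f_*G$. You also correctly identify the one nontrivial point: that the dual $G^\vee$ is again \emph{compact}, not merely perfect, which is indeed essential since $\RDERF f_*$ of a merely perfect complex need not be bounded when $X$ is not concentrated.

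However, your proposed justification of that key point is where the gap lies, and the fix you sketch would not go through as written. You propose to check compactness of $G^\vee$ ``locally,'' reducing to quotients $[\spec B/\Gamma]$ by finite flat group schemes and invoking a Frobenius-algebra argument. Two problems: first, compactness in $\DQCOH(X)$ is a global condition and cannot be verified smooth-locally in the naive sense---in fact the whole subtlety here is precisely that the local invariant (perfectness) is weaker than the global one (compactness) when $X$ has infinite cohomological dimension. Second, a stack with finite diagonal need not have any convenient local presentation $[\spec B/\Gamma]$ with $\Gamma$ finite flat in the generality at hand; your sketch is a heuristic, not an argument. The paper closes the gap with a short abstract observation that sidesteps all of this: $G^\vee$ is a retract of $G^\vee \otimes^{\LDERF}_{\Orb_X} G \otimes^{\LDERF}_{\Orb_X} G^\vee$ (the standard dualizability triangle identity), and the latter is compact because it is a perfect object ($G^\vee \otimes^{\LDERF} G^\vee$) tensored with a compact one ($G$), and $\DQCOH(X)^c$ is a thick $\otimes$-ideal in $\PERF(X)$ by \cite[Lem.~4.4(2)]{perfect_complexes_stacks}. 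Since compacts form a thick subcategory, the retract $G^\vee$ is compact. I'd encourage you to replace your local Frobenius argument with this retract argument; with that substitution your proof matches the paper's.
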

\begin{proof}
  Again, we may assume that $S=\spec A$ is affine. First note that if
  $G \in \DQCOH(X)^c$, then it is perfect
  \cite[Lem.~4.4(1)]{perfect_complexes_stacks}. In particular,
  $G^\vee \otimes_{\Orb_X}^{\LDERF} G \otimes_{\Orb_X}^{\LDERF} G^\vee
  \in \DQCOH(X)^c$ \cite[Lem.~4.4(2)]{perfect_complexes_stacks}. Since
  $G^\vee$ is a direct summand of
  $G^\vee \otimes_{\Orb_X}^{\LDERF} G \otimes_{\Orb_X}^{\LDERF}
  G^\vee$, $G^\vee \in \DQCOH(X)^c$. By Proposition \ref{P:necessary},
  it now follows that
  $\RDERF\Gamma(X,G) \simeq \RHom_{\Orb_X}(G^\vee,\Orb_X) \in
  \DCAT^b_{\COH}(A)$. Hence, it remains to show that $\RDERF\Gamma(X,G)$ has
  finite tor-dimension \cite[Tag \spref{0658}]{stacks-project}. Since
  $G$ is perfect over $X$ and $X$ has finite tor-dimension, then $G$
  has tor-amplitude in $[a,b]$ for some $-\infty< a<b < \infty$ over
  $A$. By \cite[Prop.~4.11]{perfect_complexes_stacks}, if $I$ is an
  $A$-module, then
  \[
    \RDERF\Gamma(X,G) \otimes^{\LDERF}_A I \simeq \RDERF \Gamma(X, G \tensor^{\LDERF}_A I) \in \DCAT^{[a,b+r]}(A),
  \]
  where $r$ is chosen as in
  \cite[Lem.~4.5(2)]{perfect_complexes_stacks} for
  $\mathcal{P} = G^\vee$, which gives the claim.
\end{proof}
We now establish a converse to Proposition \ref{P:necessary}, which
extends \cite[Tag \spref{0CTT}]{stacks-project} from algebraic spaces
to algebraic stacks with finite diagonal. This will be used to
establish coherent duality for proper morphisms algebraic stacks
(Theorem \ref{T:coherent-duality}). Somewhat surprisingly, the
non-noetherian result can also be established. For background material
on pseudocoherence, we refer the interested reader to
\cite[\S3]{GAGA_theorems} and the associated references. The key
examples to keep in mind are if $X$ is noetherian:
\begin{itemize}
\item $M\in \DQCOH(X)$ is
  pseudocoherent if and only if it belongs to $\DCAT^-_{\COH}(X)$
  \cite[Cor.~I.3.5]{MR0354655}; 
\item $Y \to X$ is pseudocoherent if and only if it is locally of
  finite type \cite[Tag \spref{06BX}]{stacks-project}.
\end{itemize}
\begin{theorem}\label{T:boundedness-stacks}
  Let $A$ be a ring. Let $X \to \spec A$ be a finitely presented
  morphism of algebraic stacks with finite diagonal. Let
  $M \in \DQCOH(X)$. If $\RHom_{\Orb_X}(G,M) \in \DCAT(A)$ is
  pseudocoherent (resp.~pseudocoherent and bounded) for all
  $G \in \DQCOH(X)^c$, then $M$ is pseudocoherent
  (resp.~pseudocoherent and bounded) relative to $A$. In particular,
  if $X \to \spec A$ is pseudocoherent, then $M$ is pseudocoherent
  (resp.~pseudocoherent and bounded). 
\end{theorem}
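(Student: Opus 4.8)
The plan is to reduce to the known case of algebraic spaces via a presentation of $X$, combined with a dévissage on a chosen family of compact generators. Since $X \to \spec A$ is finitely presented with finite diagonal, after noetherian approximation we may assume $A$ is noetherian (pseudocoherence relative to $A$ is a condition that descends along a limit of rings, and $\DQCOH(X)^c = \PERF(X)$ is unaffected); this is where the ``surprising'' non-noetherian statement gets handled. So assume $A$ is noetherian and $X$ is a noetherian algebraic stack with finite diagonal, so that $X$ is tame after passing to a suitable cover and, in any case, $\DQCOH(X)$ is compactly generated by a single perfect complex $G_0$ (using \cite{perfect_complexes_stacks}, and the fact that stacks with finite diagonal have the resolution property Nisnevich-locally, or are tame).

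First I would fix a single compact generator $G_0 \in \DQCOH(X)^c$ and recall that $\thick{G_0} = \DQCOH(X)^c = \PERF(X)$. The hypothesis gives that $\RHom_{\Orb_X}(G_0, M)$ is pseudocoherent over $A$; I want to upgrade this to the statement that $M$ itself is pseudocoherent relative to $A$, i.e.\ (by \cite[Cor.~I.3.5]{MR0354655} in the noetherian case) that $M \in \DCAT^-_{\COH}(X)$, and bounded if the hypothesis is bounded. The key point is a ``reconstruction'' of $M$ from the complexes $\RHom_{\Orb_X}(G, M)$ as $G$ ranges over $\PERF(X)$: because $G_0$ generates, there is no nonzero object of $\DQCOH(X)$ killed by $\RHom(G_0, -)$, and more quantitatively, for each $n$ one can build an approximation to $\trunc{\geq n} M$ by a finite complex of shifts of summands of sums of copies of $G_0$ using the generation. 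The bookkeeping here is exactly the approximation machinery of Neeman; I would cite the version in \cite[\S3--4]{GAGA_theorems} or Appendix \ref{A:strong-gens} to get, for each $n$, a perfect complex $P_n$ and a map $P_n \to M$ whose cone lies in $\DCAT^{\leq n}$, with the cohomology sheaves of $M$ in a fixed range being coherent.

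The cleanest route, which I would actually pursue, is to pull back along a smooth presentation $U \to X$ with $U$ an affine scheme: pseudocoherence of $M$ relative to $A$ is smooth-local on $X$, so it suffices to show $\LDERF u^* M$ is pseudocoherent relative to $A$ on $U$, where $u\colon U \to X$. Now $U \to \spec A$ is a finitely presented morphism of schemes (or algebraic spaces), so by \cite[Tag \spref{0CTT}]{stacks-project} it is enough to check that $\RHom_{\Orb_U}(G_U, \LDERF u^* M)$ is pseudocoherent over $A$ for all $G_U \in \DQCOH(U)^c = \PERF(U)$. The finite-diagonal hypothesis enters to guarantee that $u_! $ (or more precisely the functor relating perfect complexes on $U$ to compact objects on $X$) takes $\PERF(U)$ into $\DQCOH(X)^c$: concretely, $\RHom_{\Orb_U}(G_U, \LDERF u^* M) \simeq \RHom_{\Orb_X}(\text{(a compact object built from } G_U\text{)}, M)$ by adjunction, once one knows $\RDERF u_{\qcsubscript,*}$ preserves compactness, which holds because $u$ is representable, quasicompact, quasiseparated, and of finite tor-dimension with the target having finite diagonal (so $u$ is concentrated and, by an argument as in Corollary \ref{C:perfect-tor-dim}, $\RDERF u_* $ of a perfect complex lands in $\PERF(X) = \DQCOH(X)^c$). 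Then the hypothesis on $M$ applies directly to these compact objects, giving pseudocoherence of $\RHom_{\Orb_U}(G_U, \LDERF u^* M)$ over $A$, hence pseudocoherence of $\LDERF u^* M$ over $A$ by the scheme case, hence pseudocoherence of $M$ over $A$ by smooth descent. The bounded case is identical, tracking the bounded-below and bounded-above estimates through each step.

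The main obstacle I anticipate is the second step: showing that $\RDERF u_{\qcsubscript,*}$ carries perfect complexes on the presentation $U$ to \emph{compact} objects of $\DQCOH(X)$, which is where the finite-diagonal hypothesis is genuinely needed and where the non-noetherian case requires care. One must check that $u$ has finite tor-dimension (true, since $u$ is smooth) and that the relevant cohomological dimension is finite and finitely presented in the $A$-direction, so that the resulting object is not merely in $\DCAT^b_{\COH}$ but actually perfect over $X$; this is precisely the content of Corollary \ref{C:perfect-tor-dim} applied to $u$, and it is the place where ``finite diagonal'' cannot be weakened to ``quasi-affine diagonal'' without extra hypotheses. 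Once that is in hand, the rest is the standard adjunction-and-descent argument.
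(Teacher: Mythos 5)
Your proposal has a genuine gap at the key step. You want to reduce to the scheme case by checking pseudocoherence of $\LDERF u^* M$ on a smooth affine presentation $u \colon U \to X$, and to do this you claim that $\RHom_{\Orb_U}(G_U, \LDERF u^*M) \simeq \RHom_{\Orb_X}(\text{compact}, M)$ by adjunction, invoking preservation of compactness by $\RDERF u_{\qcsubscript,*}$ ``by an argument as in Corollary~\ref{C:perfect-tor-dim}.'' But Corollary~\ref{C:perfect-tor-dim} is for \emph{proper} morphisms, and a smooth presentation is essentially never proper. Concretely, already for $X = \spec k$ and $U = \Aff^1_k$, the complex $\RDERF u_{\qcsubscript,*}\Orb_U = k[t]$ is infinite-dimensional, hence not compact in $\DQCOH(\spec k)$; likewise $u_!$ (the left adjoint to $\LDERF u^*$, which is what the adjunction you want would actually require) does not carry perfect complexes on $U$ to compact objects of $\DQCOH(X)$. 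So the adjunction step simply does not produce objects of $\DQCOH(X)^c$, and the hypothesis on $M$ cannot be fed in. This is not a small bookkeeping issue; it is the whole content of the theorem, and ``finite diagonal'' rather than some weaker separation hypothesis is being used precisely to get a \emph{finite flat} chart, not a smooth one.

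The paper's actual route makes this precise. It uses noetherian approximation to produce a coarse moduli space $\pi \colon X \to Y$ (via Keel--Mori applied to a noetherian model $X_0 \to \spec A_0$), reduces the hypothesis to pseudocoherence of $\RDERF\pi_{\qcsubscript,*}\SRHom_{\Orb_X}(G,M)$ over $A$, and then pulls back along an \'etale affine $Y' \to Y$ over which $X' = X\times_Y Y'$ admits a \emph{finite, faithfully flat, finitely presented} cover $w \colon W' \to X'$ by an affine scheme. The point is that $w_{\qcsubscript,*}\Orb_{W'}$ \emph{is} perfect on $X'$ (finite flat of finite presentation), so the relevant pushforwards do land in compacts; this is the structural input that a smooth presentation cannot supply. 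A thick-subcategory bootstrapping (using that $x \colon X' \to X$ is affine and Thomason's theorem) propagates the hypothesis from $\DQCOH(X)^c$ to $\DQCOH(X')^c$, and then the finite morphism $\pi'\circ w \colon W' \to Y'$ plus \'etale descent finishes.

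Two further remarks. First, your reduction ``we may assume $A$ is noetherian by noetherian approximation'' does not work as stated: $M$ lives on $X$ over $A$ and need not descend to a noetherian model $X_0$, so pseudocoherence relative to $A$ cannot be transferred to a statement over $A_0$ for free; the paper keeps the base $A$ and only uses $A_0$ to build the moduli-space and finite-flat chart. Second, your parenthetical that $X$ with finite diagonal ``is tame after passing to a suitable cover'' is not a correct dichotomy and is not used in the paper. The theorem holds without any tameness.
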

\begin{proof}
  The latter statement follows from the former and \cite[Tag
  \spref{0DHQ}]{stacks-project}.

  By noetherian approximation, there is a finitely generated
  $\Z$-subalgebra $A_0 \subseteq A$ together with a finite type
  morphism of algebraic stacks $X_0 \to \spec A_0$ with finite
  diagonal such that $X_0 \times_{\spec A_0} \spec A \simeq X$
  \cite[App.~B]{rydh-2009}. Since $X_0$ has finite diagonal, the
  Keel--Mori Theorem \cite{MR1432041,MR3084720} implies that it admits
  a coarse moduli space $\pi_0 \colon X_0 \to Y_0$, where $Y_0$ is a
  separated algebraic space of finite type over $\spec A_0$. Set
  $Y=Y_0 \times_{\spec A_0} \spec A$ and let $\pi \colon X \to Y$ be
  the induced morphism, which is of finite presentation. Note that
  $X\simeq X_0\times_{Y_0} Y$ as well but $\pi$ is not, in general, a
  coarse moduli space (it is if $X_0$ is tame or
  $\spec A \to \spec A_0$ is flat, however). Let $G\in \DQCOH(X)^c$
  and $P \in \DQCOH(Y)^c= \PERF(Y)$; then
  \begin{align*}
    \RHom_{\Orb_X}(\LDERF \pi^*P^\vee\tensor^{\LDERF}_{\Orb_X} G,M) &\simeq \RHom_{\Orb_X}(\LDERF \pi^*P^\vee,\SRHom_{\Orb_X}(G,M))\\
                                                                    &\simeq \RHom_{\Orb_Y}(P^\vee,\RDERF \pi_{\qcsubscript,*}\SRHom_{\Orb_X}(G,M))\\
                                                                    &\simeq \RDERF\Gamma(Y,P\otimes^{\LDERF}_{\Orb_Y}\RDERF \pi_{\qcsubscript,*}\SRHom_{\Orb_X}(G,M)).
  \end{align*}
  Now $\LDERF\pi^*P^\vee \tensor^{\LDERF}_{\Orb_X} G \in \DQCOH(X)^c$
  \cite[Lem.~4.4(2)]{perfect_complexes_stacks} and so the left hand
  side of the above is pseudocoherent. Hence,
  $\RDERF \pi_{\qcsubscript,*}\SRHom_{\Orb_X}(G,M)$ is pseudocoherent relative to $A$
  for all $G\in \DQCOH(X)^c$ \cite[Tag \spref{0GFJ}]{stacks-project}
  (resp.~pseudocoherent relative to $A$ and bounded \cite[Tag
  \spref{0GFE}]{stacks-project}).
  
  Observe that there is an \'etale covering
  $Y_0' \xrightarrow{y_0} Y_0$, where $Y_0'$ is an affine scheme,
  together with a finite and faithfully flat morphism of finite
  presentation $w_0 \colon W_0' \to X_0\times_{Y_0} Y_0'$, where
  $W_0'$ is an affine scheme (see, for example, \cite[Proof of
  Thm.~6.12]{MR3084720}). Now form the diagram:
  \[
    \xymatrix{ & W' \ar[d]_{w} \ar[rr]  & & W_0' \ar[d]^{w_0} \\& X' \ar[dl]_x \ar[rr]\ar[dd]^(0.3){\pi'}|\hole & & X_0' \ar[dd]^{\pi_0'} \ar[dl]\\X \ar[rr] \ar[dd]_\pi & &X_0 \ar[dd]^(0.3){\pi_0} & \\ & Y' \ar[rr]|\hole \ar[ddl]|\hole \ar[dl]_y& & Y_0' \ar[dl] \ar[ddl]\\
      Y \ar[rr] \ar[d] & &\ar[d] Y_0 &\\
      \spec A \ar[rr] & & \spec A_0, &}
  \]
  where all squares are cartesian. Observe that if $P \in \DQCOH(X)$, then
  \[
    y^*\RDERF \pi_{\qcsubscript,*}\SRHom_{\Orb_X}(P,M) \simeq \RDERF
    \pi'_{\qcsubscript,*}x^*\SRHom_{\Orb_X}(P,M) \simeq \RDERF
    \pi'_{\qcsubscript,*}\SRHom_{\Orb_{X'}}(x^*P,x^*M).
  \]
  It follows from the above that the right hand side is pseudocoherent
  relative to $A$ (resp.~pseudocoherent relative to $A$ and bounded)
  whenever $P\in \DQCOH(X)^c$. Let
  $\mathcal{T} \subseteq \DQCOH(X')^c$ be the full subcategory with
  objects those $G'$ such that the complex
  $\RDERF \pi'_{\qcsubscript,*}\SRHom_{\Orb_{X'}}(G',x^*M)$ is pseudocoherent
  relative to $A$ (resp.~pseudocoherent relative to $A$ and
  bounded). Certainly, $\mathcal{T}$ is a thick and triangulated
  subcategory. The above shows that
  $x^*\DQCOH(X)^c \subseteq \mathcal{T}$. But $x$ is affine, so the
  smallest thick subcategory of $\DQCOH(X')^c$ containing
  $x^*\DQCOH(X)^c$ is $\DQCOH(X')^c$ \cite[Lem.~8.2 \&
  Thm.~3.12]{perfect_complexes_stacks} (essentially Thomason's
  Theorem). It follows that $\mathcal{T} = \DQCOH(X')^c$ and
  $\RDERF \pi'_{\qcsubscript,*}\SRHom_{\Orb_{X'}}(G',x^*M)$ is pseudocoherent
  relative to $A$ (resp.~pseudocoherent relative to $A$ and bounded)
  for all $G'\in\DQCOH(X')^c$.

  Now $w$ is a finite and faithfully flat morphism of finite
  presentation and $W'$ is affine, so $w_{\qcsubscript,*}\Orb_{W'} \in \DQCOH(X')^c$
  \cite[Ex.~3.8 \& Cor.~4.15]{perfect_complexes_stacks}. It follows
  that
  \begin{align*}
    \RDERF \pi'_{\qcsubscript,*}\SRHom_{\Orb_{X'}}((w_{\qcsubscript,*}\Orb_{W'})^\vee,x^*M) &\simeq \RDERF \pi'_{\qcsubscript,*}(w_{\qcsubscript,*}\Orb_{W'}\tensor^{\LDERF}_{\Orb_{X'}}x^*M)\\
                                                              &\simeq \RDERF \pi'_{\qcsubscript,*}\RDERF w_{\qcsubscript,*}w^*x^*M\\
                                                              &\simeq \RDERF (\pi'\circ w)_{\qcsubscript,*}(w^*x^*M)                                                              
  \end{align*}
  is pseudocoherent relative to $A$ (resp.~pseudocoherent relative to
  $A$ and bounded). But $\pi' \circ w \colon \colon W' \to Y'$ is
  finite, so $w^*x^*M$ is pseudocoherent relative to $A$
  (resp.~pseudocoherent relative to $A$ and bounded)
  \cite[\spref{09UK}]{stacks-project}. By \'etale descent, it follows
  that $M$ is pseudocoherent relative to $A$ (resp.~pseudocoherent
  relative to $A$ and bounded).
\end{proof}
We have the following ``saturation'' result for stacks, along the
lines of \cite{MR1996800}.
\begin{corollary}\label{C:saturated}
  Let $X$ be a proper algebraic stack over a field $k$ with finite
  diagonal. If $F \colon (\DQCOH(X)^c)^\opp \to \Vect(k)$ is
  a cohomological functor of finite type, then it is representable by some
  $M \in \DCAT^b_{\COH}(X)$. 
\end{corollary}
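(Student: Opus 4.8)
The plan is to realize $F$ as the composite of a standard Brown-representability argument on the unbounded derived category with the finiteness input provided by Theorem~\ref{T:boundedness-stacks}. First I would invoke Thomason's localization theorem for stacks satisfying the Thomason condition: since $X$ is proper over $k$ with finite diagonal it is in particular noetherian with affine (indeed finite) diagonal, so $\DQCOH(X)$ is compactly generated with $\DQCOH(X)^c = \PERF(X)$, and the given cohomological functor $F\colon (\DQCOH(X)^c)^\opp\to \Vect(k)$ of finite type extends uniquely to a cohomological functor on $(\PERF(X))^\opp$. By Neeman's representability theorem (Brown representability for the dual, applied to the compactly generated category $\DQCOH(X)$), $F$ is represented by some object $M\in\DQCOH(X)$; that is, $F(G)\simeq\Hom_{\DQCOH(X)}(G,M)$ naturally for $G\in\PERF(X)=\DQCOH(X)^c$. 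At this stage $M$ is merely an object of the unbounded derived category, and the entire content of the corollary is to upgrade it to a bounded coherent complex.

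The key step is then exactly Theorem~\ref{T:boundedness-stacks}, applied with $A=k$: the morphism $X\to\spec k$ is proper, hence of finite type, hence pseudocoherent, and it has finite diagonal. For each $G\in\DQCOH(X)^c$ we have $\RHom_{\Orb_X}(G,M)\simeq F(G)$, which lies in $\DCAT^b_{\COH}(k)=\Vect(k)$-complexes with finitely many nonzero cohomology groups, each finite-dimensional; in particular it is pseudocoherent and bounded as a complex of $k$-modules. (Here I must check that the derived Hom computed in the unbounded category agrees with $\RHom_{\Orb_X}(G,M)$ in the sense of the excerpt — this is immediate since $G$ is compact, so $\Hom_{\DQCOH(X)}(G,M[i])=H^i\RHom_{\Orb_X}(G,M)$, and finite type of $F$ forces all but finitely many of these to vanish.) Therefore Theorem~\ref{T:boundedness-stacks} applies and yields that $M$ is pseudocoherent and bounded relative to $k$; since $X\to\spec k$ is of finite type and $X$ is noetherian, ``pseudocoherent'' over $X$ is equivalent to $M\in\DCAT^-_{\COH}(X)$, and combined with boundedness we get $M\in\DCAT^b_{\COH}(X)$, as claimed.

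The main obstacle I anticipate is purely bookkeeping: verifying that a \emph{cohomological functor of finite type} in the sense needed here really does produce, for every compact $G$, a complex of $k$-vector spaces that is both bounded and has finite-dimensional cohomology — i.e.\ pinning down the precise definition of ``finite type'' (bounded amplitude plus finite-dimensionality, uniformly, or just pointwise) and checking it matches the pseudocoherence-and-boundedness hypothesis of Theorem~\ref{T:boundedness-stacks}. If ``finite type'' only guarantees finite-dimensionality of each $F(G)$ without a uniform bound on amplitude, one still gets that each $\RHom_{\Orb_X}(G,M)$ is bounded (by finite type) with finite-dimensional cohomology, which is all that is required; the uniform bound on $M$ itself then comes out of Theorem~\ref{T:boundedness-stacks} rather than being assumed. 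A secondary, minor point is confirming that the Brown-representability / Thomason machinery of \cite{perfect_complexes_stacks} applies to $X$: this is guaranteed because a proper algebraic stack over a field with finite diagonal has finite cohomological dimension (it is tame in characteristic zero situations, but more generally finite diagonal suffices via the cited results), so $\DQCOH(X)$ is compactly generated and the identification $\DQCOH(X)^c=\PERF(X)$ holds.
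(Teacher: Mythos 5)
Your proposal follows the paper's proof essentially verbatim: compact generation of $\DQCOH(X)$ (Theorem~A of \cite{perfect_complexes_stacks}) plus Brown/Neeman representability for cohomological functors on $(\DQCOH(X)^c)^\opp$ produce $M\in\DQCOH(X)$, the finite-type hypothesis yields $\RHom_{\Orb_X}(G,M)\in\DCAT^b_{\COH}(k)$ for every compact $G$, and Theorem~\ref{T:boundedness-stacks} then forces $M\in\DCAT^b_{\COH}(X)$. One incidental slip worth flagging, though it does not affect the argument: you claim that finite diagonal implies finite cohomological dimension and hence $\DQCOH(X)^c=\PERF(X)$; this is false in positive characteristic (e.g.\ $B(\Z/p)$ over $\mathbb{F}_p$), and compact generation already follows from quasi-finite separated diagonal without invoking finite cohomological dimension, so the identification of compacts with perfects is neither true in general nor needed here --- the corollary and Theorem~\ref{T:boundedness-stacks} are both phrased directly in terms of $\DQCOH(X)^c$.
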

\begin{proof}
  By \cite[Thm.~A]{perfect_complexes_stacks}, $\DQCOH(X)$ is compactly
  generated. Hence, \cite[Lem.~2.14]{MR1867248} provides
  $M \in \DQCOH(X)$ with $F(-) \simeq \Hom_{\Orb_X}(-,M)$. Since $F$
  is of finite type, if $G\in \DQCOH(X)^c$, then
  $\sum_{n\in \Z} \dim_k F(G[n]) < \infty$. It follows that
  $\RHom_{\Orb_X}(G,M) \in \DCAT_{\COH}^b(k)$ for all
  $G\in \DQCOH(X)^c$. Theorem \ref{T:boundedness-stacks} implies that
  $M\in \DCAT^b_{\COH}(X)$.
\end{proof}
\section{Coherent duality for proper and tame morphisms}
Let $f \colon X \to S$ be a concentrated morphism of algebraic
stacks. By \cite[Thm.~4.14]{perfect_complexes_stacks}, there is an
adjoint pair
\[
  \RDERF f_{\qcsubscript,*} \colon \DQCOH(X) \leftrightarrows \DQCOH(S) \colon
  f^\times.
\]
Note that $f^\times$ is much simpler to construct than $f^!$, which is
a partial right adjoint to $\RDERF f_{\qcsubscript,*}$ on a different category
\cite{MR4575464}. Also see \cite{MR4479830} for some related results.

If $f$ is a proper morphism of finite tor-dimension between noetherian
schemes, then $f^\times$ admits an explicit description
\cite[\S5]{MR1308405}. Here we extend this result to proper and tame
morphisms of finite tor-dimension of noetherian algebraic stacks,
using the recent developments discussed in \cite{MR4239176}. We also
observe that Theorem \ref{T:boundedness-stacks} shows that $f^\times$
preserves coherence, which will be important for our applications.
\begin{theorem}\label{T:coherent-duality}
  Let $S$ be a noetherian and concentrated algebraic stack. Let
  $f \colon X \to S$ be a proper and tame morphism of algebraic stacks
  of finite tor-dimension.
  \begin{enumerate}
  \item \label{TI:coherent-duality:bc} If  $j \colon S'\to S$ is
    tor-independent of $f$ and concentrated, where $S'$ is
    noetherian, and $\theta \colon X\times_S S' \to X$ and
    $f' \colon X\times_S S' \to S'$ are the projections; then
    $\theta^*f^\times(-) \simeq f'^\times j^*(-)$.
  \item \label{TI:coherent-duality:coherent} The restriction of
    $f^\times$ to $\DCAT^b_{\COH}(S)$ factors through
    $\DCAT^b_{\COH}(X)$ and
  \item \label{TI:coherent-duality:formula}
    $f^\times(-) \simeq f^\times(\Orb_S) \otimes^{\LDERF}_{\Orb_X}
    \LDERF f^*(-)$.
  \end{enumerate}
\end{theorem}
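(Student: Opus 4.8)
The plan is to reduce everything to the representable (indeed scheme) case via a presentation of $S$, and then invoke the classical results together with the new finiteness input from Theorem \ref{T:boundedness-stacks}. First I would dispose of \eqref{TI:coherent-duality:bc}: since $f^\times$ is, by construction, the right adjoint of $\RDERF f_{\qcsubscript,*}$ and $\theta$, $j$ are flat-base-change-compatible (using tor-independence, so that $\RDERF f_{\qcsubscript,*}$ commutes with $\LDERF j^*$ on $\DQCOH$ — this is the stacky flat base change of \cite{perfect_complexes_stacks}), the base-change map $\theta^* f^\times \to f'^\times j^*$ is obtained by adjunction from $\RDERF f'_{\qcsubscript,*}\theta^*f^\times \simeq j^*\RDERF f_{\qcsubscript,*}f^\times \to j^*$. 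One then checks it is an isomorphism; since $\DQCOH(X)$ is compactly generated (Thomason, \cite{perfect_complexes_stacks}) it suffices to test against $\LDERF\theta^* G$ for $G$ compact, where the projection formula reduces the claim to the identity $\RHom_{\Orb_X}(G, f^\times(-))\simeq \RHom_{\Orb_S}(\RDERF f_{\qcsubscript,*} G, -)$ and its primed analog, which match up by flat base change. (This part is essentially formal and I do not expect difficulty; it is needed so that the later smooth-local arguments make sense.)

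Next, \eqref{TI:coherent-duality:coherent}. Let $N \in \DCAT^b_{\COH}(S)$; I want $f^\times(N) \in \DCAT^b_{\COH}(X)$. The key identity is that for any $G \in \DQCOH(X)^c = \PERF(X)$,
\[
  \RHom_{\Orb_X}(G, f^\times(N)) \simeq \RHom_{\Orb_S}(\RDERF f_{\qcsubscript,*}G, N).
\]
Now $\RDERF f_{\qcsubscript,*}G \in \PERF(S)$ by Corollary \ref{C:perfect-tor-dim} (here is where finite tor-dimension and finite diagonal of $f$ enter), so $\RHom_{\Orb_S}(\RDERF f_{\qcsubscript,*}G, N)$ is a bounded coherent complex on $S$; applying a fixed affine presentation $\spec A \to S$ and using \eqref{TI:coherent-duality:bc} to localize, we get that $\RHom_{\Orb_{X_A}}(G_A, f_A^\times(N_A))$ is pseudocoherent and bounded over $A$ for all $G_A$ compact. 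Theorem \ref{T:boundedness-stacks}, applied to the finitely presented morphism $X_A \to \spec A$ with finite diagonal, then forces $f_A^\times(N_A)$ to be pseudocoherent and bounded relative to $A$, i.e.\ it lies in $\DCAT^b_{\COH}(X_A)$; since coherence and boundedness are smooth-local on $X$, this gives $f^\times(N) \in \DCAT^b_{\COH}(X)$. (I should double-check the reduction to affine $S$ is legitimate: $f^\times$ commutes with the smooth pullback to $\spec A$ by part \eqref{TI:coherent-duality:bc}, since a smooth morphism is flat, hence tor-independent of $f$.)

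For \eqref{TI:coherent-duality:formula}, I would construct the natural transformation $f^\times(\Orb_S)\otimes^{\LDERF}_{\Orb_X}\LDERF f^*(-) \to f^\times(-)$ as the adjoint of
\[
  \RDERF f_{\qcsubscript,*}\bigl(f^\times(\Orb_S)\otimes^{\LDERF}_{\Orb_X}\LDERF f^*(-)\bigr) \simeq \RDERF f_{\qcsubscript,*}f^\times(\Orb_S)\otimes^{\LDERF}_{\Orb_S}(-)\xrightarrow{\ \mathrm{counit}\ } \Orb_S \otimes^{\LDERF}_{\Orb_S}(-) \simeq (-),
\]
using the projection formula for the concentrated morphism $f$ (\cite{perfect_complexes_stacks}), which is valid here. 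To see it is an isomorphism it suffices, again by compact generation of $\DQCOH(S)$ and the fact that both sides send $\DQCOH(S)^c$ into $\DQCOH(X)$ commuting with arbitrary coproducts (for the left side this is clear; for $f^\times$ this needs $f$ concentrated of finite tor-dimension, so that $\RDERF f_{\qcsubscript,*}$ preserves compactness and $f^\times$ preserves coproducts), to check it after $\RHom_{\Orb_X}(G,-)$ for $G$ perfect and on a generating set of objects, e.g.\ $N = \Orb_S$ where it is the identity; then a standard dévissage propagates it. The main obstacle is genuinely the finiteness step in \eqref{TI:coherent-duality:coherent}: without Theorem \ref{T:boundedness-stacks} one only knows $f^\times(N)$ is an abstract object of $\DQCOH(X)$, and it is precisely the "test against all compact $G$" criterion that converts the pointwise finiteness of $\RHom$ into global coherence and boundedness of $f^\times(N)$ itself; tameness of $f$ is used to guarantee $f^\times$ restricted to $\DCAT^b_{\COH}$ does not pick up unbounded cohomology, matching the behaviour of $\RDERF f_{\qcsubscript,*}$ on $\DCAT^b_{\COH}$ recalled at the start of \S\ref{S:finite}.
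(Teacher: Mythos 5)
Your treatments of \eqref{TI:coherent-duality:coherent} and \eqref{TI:coherent-duality:formula} match the paper's: the identity $\RHom_{\Orb_X}(G,f^\times N)\simeq\RHom_{\Orb_S}(\RDERF f_{\qcsubscript,*}G,N)$ plus Corollary \ref{C:perfect-tor-dim} and Theorem \ref{T:boundedness-stacks} is exactly the paper's argument for part (2), and the Neeman-style ``check on compacts after observing $f^\times$ preserves coproducts'' argument you sketch for part (3) is what the cited \cite[Lem.~2.4(1)]{MR4239176} encodes.

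The gap is in \eqref{TI:coherent-duality:bc}, and it is not merely cosmetic. You claim that because $\DQCOH(X)$ is compactly generated it suffices to test the base-change map against objects of the form $\LDERF\theta^*G$ with $G$ compact on $X$. But the base-change morphism lives in $\DQCOH(X\times_S S')$, and the objects $\LDERF\theta^*G$ do \emph{not} compactly generate $\DQCOH(X\times_S S')$ unless $\theta$ is affine (equivalently, $S'\to S$ is affine). A concrete counterexample to the generation claim: take $X=S=\spec k$ and $S'=\mathbf{P}^1_k$; then $\theta\colon\mathbf{P}^1_k\to\spec k$ and $\LDERF\theta^*$ of compacts produces only direct sums of shifts of $\Orb_{\mathbf{P}^1}$, which do not generate $\DQCOH(\mathbf{P}^1_k)$. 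To make a direct compact-generation argument work, you would have to test against the full set of compact generators of $\DQCOH(X\times_S S')$ --- typically objects of the form $\LDERF\theta^*G\otimes^{\LDERF}\LDERF f'^*H$ with $G$ compact on $X$ and $H$ compact on $S'$ --- and then use both the projection formula and tor-independent base change for the pushforward to unwind; this is doable but requires an argument you have not given, and it needs the compact-generation statement for fibre products over stacks, which is not automatic. The paper avoids this by a different route: for $S$, $S'$ quasi-affine it cites the specific base-change machinery of \cite{MR4239176}; for general $S$ it uses that $\RDERF p_{\qcsubscript,*}\Orb_{\spec A}$ is descendable for a smooth affine cover $p\colon\spec A\to S$ (citing \cite{hall2022remarks}), which reduces testing on all of $\DQCOH(S)$ to objects $\RDERF p_{\qcsubscript,*}N$, and then tor-independent base change for \emph{pushforwards} moves the problem to the already-settled affine case; a final bootstrap handles general $S'$. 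Your ``this is essentially formal and I do not expect difficulty'' misjudges precisely the step where the paper invests the most technical effort, and since your proofs of parts (2) and (3) both invoke part (1) to reduce to affine $S$, the gap propagates.
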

\begin{proof}
  We note once and for all that $\RDERF f_{\qcsubscript,*}$ sends perfects to
  perfects (Corollary \ref{C:perfect-tor-dim}) and so compacts to
  compacts if $S$ is concentrated.
  
  We begin with \eqref{TI:coherent-duality:bc}. If $S$ and $S'$ are
  quasi-affine, then by \cite[Lem.~7.1]{perfect_complexes_stacks} we
  are in the situation of \cite[Defn.~5.4]{MR4239176} and
  \cite[Prop.~6.3]{MR4239176} gives the base change
  result.
  In general, let $p\colon \spec A \to S$ be a smooth covering; then
  $\RDERF p_{\qcsubscript,*}\Orb_{\spec A}$ is a descendable $\Orb_S$-algebra
  \cite[Thm.~7.1]{hall2022remarks}. In particular, the smallest thick
  subcategory of $\DQCOH(S)$ containing the objects $\RDERF p_{\qcsubscript,*}N$,
  $N \in \DQCOH(\spec A)$ is $\DQCOH(S)$. Hence, it suffices to
  establish the base change isomorphism on objects of the form
  $\RDERF p_{\qcsubscript,*}N$. Straightforward functoriality and tor-independent
  base change \cite[Cor.~4.13]{perfect_complexes_stacks}
  considerations show that it suffices to establish base change when
  $S=\spec A$. If $S'$ is quasi-affine, then the case already
  considered establishes base change. In particular, base change is
  proved whenever $j$ is quasi-affine. A standard bootstrapping
  procedure now gives the result whenever $j$ is representable. A
  further bootstrapping gets us the general result.

  For \eqref{TI:coherent-duality:coherent} and \eqref{TI:coherent-duality:formula}, by
  \eqref{TI:coherent-duality:bc} we may reduce to the situation where
  $S=\spec A$ is affine. Let $P\in \DQCOH(X)$ be perfect and
  $M\in \DCAT^b_{\COH}(S)$; then
  \[
    \RHom_{\Orb_X}(P,f^\times({M})) \simeq \RHom_{\Orb_S}(\RDERF f_{\qcsubscript,*}P,M).
  \]
  Since $\RDERF f_{\qcsubscript,*}P$ is perfect,
  $\RHom_{\Orb_S}(\RDERF f_{\qcsubscript,*}P,M) \in \DCAT^b_{\COH}(A)$. Theorem
  \ref{T:boundedness-stacks} now implies that
  $f^\times(M) \in \DCAT^b_{\COH}(X)$, which gives
  \eqref{TI:coherent-duality:coherent}. Finally,
  \eqref{TI:coherent-duality:formula} follows from
  \cite[Lem.~2.4(1)]{MR4239176} and the preservation of perfects/compacts.
\end{proof}
\begin{corollary}\label{C:gorenstein-duality}
  Let $S$ be a noetherian and concentrated algebraic stack. Let
  $f\colon X \to S$ be a proper and tame morphism of algebraic stacks. If $f$ is flat with geometrically Gorenstein fibers of dimension $d$, then
  \begin{enumerate}
  \item\label{CI:gorenstein-duality:perfect} $f^{\times}$ sends perfects to perfects;
  \item\label{CI:gorenstein-duality:lines} if $L$ is a line bundle on
    $S$, then $f^{\times}L[-d]$ is a line bundle on $X$.
  \end{enumerate}
\end{corollary}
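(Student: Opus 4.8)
The plan is to reduce both parts to a single statement about the relative dualizing complex $\omega_{X/S}:=f^\times(\Orb_S)$ and then to check that statement on the fibers of $f$. Since $f$ is flat it has tor-dimension $0$, so Theorem~\ref{T:coherent-duality} applies: by \eqref{TI:coherent-duality:coherent} we have $\omega_{X/S}\in\DCAT^b_{\COH}(X)$, and by \eqref{TI:coherent-duality:formula} we have $f^\times(-)\simeq\omega_{X/S}\otimes^{\LDERF}_{\Orb_X}\LDERF f^*(-)$. I claim it is enough to prove that $\omega_{X/S}$ is a line bundle placed in cohomological degree $-d$. Granting this: $\omega_{X/S}$ is perfect, $\LDERF f^*$ preserves perfect complexes, and a derived tensor product of perfect complexes is perfect, so $f^\times$ preserves perfectness --- this is \eqref{CI:gorenstein-duality:perfect}; and, for a line bundle $L$ on $S$, $f^\times(L)[-d]\simeq\omega_{X/S}[-d]\otimes_{\Orb_X}f^*L$ (using that $\LDERF f^*L=f^*L$ as $f$ is flat) is a tensor product of line bundles, which gives \eqref{CI:gorenstein-duality:lines}.

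I would prove that $\omega_{X/S}[-d]$ is a line bundle by reducing to the fibers. The condition of being a line bundle concentrated in a fixed cohomological degree is fppf-local on $X$, and since $f$ is flat \emph{every} morphism to $S$ is tor-independent of $f$; so by \eqref{TI:coherent-duality:bc} the formation of $\omega_{X/S}$ commutes with arbitrary concentrated noetherian base change on $S$, and a smooth surjection onto $S$ from an affine scheme reduces us to $S=\spec A$ with $A$ noetherian. By \eqref{TI:coherent-duality:formula}, \eqref{TI:coherent-duality:coherent}, and concentratedness of $f$, the complex $\omega_{X/S}$ has finite tor-dimension over $S$; hence, by the standard fiberwise criterion --- a bounded complex of finite tor-dimension over $S$ whose derived restriction to each fiber is a line bundle sitting in one and the same degree is itself such --- it suffices to show, for each $s\in\spec A$, that the derived restriction of $\omega_{X/S}$ to $X_s:=X\times_S\spec\kappa(s)$ is a line bundle in degree $-d$. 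Applying \eqref{TI:coherent-duality:bc} to the affine, noetherian, $f$-tor-independent morphism $\spec\kappa(s)\to\spec A$ identifies this restriction with the dualizing complex of $X_s$ --- a proper, tame algebraic stack with finite diagonal over $\kappa(s)$ which is Gorenstein (as $f$ has geometrically Gorenstein fibers, and Gorenstein-ness descends along faithfully flat local maps) and equidimensional of dimension $d$.

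It thus remains to prove: if $Z$ is a proper, tame algebraic stack with finite diagonal, Gorenstein and equidimensional of dimension $d$ over a field $k$, then the dualizing complex $\omega_Z:=f_Z^\times(\Orb_{\spec k})$ of $Z$ (here $f_Z\colon Z\to\spec k$) is a line bundle in degree $-d$. For this, pick a smooth surjection $u\colon U\to Z$ with $U$ a scheme; then $U$ is Gorenstein and of finite type over $k$, so classical Grothendieck duality \cite{MR1308405} shows its dualizing complex is, locally on $U$, a shifted line bundle. The exceptional-pullback formalism for algebraic stacks \cite{MR4575464} --- pseudo-functoriality together with the smooth case $u^{!}(-)\simeq\LDERF u^{*}(-)\otimes_{\Orb_U}\omega_{U/Z}[\dim u]$, with $\omega_{U/Z}$ a line bundle and $\dim u$ locally constant --- then identifies $\LDERF u^{*}\omega_Z$ with a line bundle on $U$ concentrated in degree $-d$ (using that $Z$ has pure dimension $d$); since $u$ is an fppf cover, $\omega_Z$ is a line bundle in degree $-d$.

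I expect the main obstacle to lie in the reduction step: reconciling $f^\times$ --- which exists only for concentrated morphisms, and only as a right adjoint to $\RDERF f_{\qcsubscript,*}$ --- with the smooth atlases through which Gorenstein-ness is accessed, and confirming that the two classical scheme-theoretic ingredients (the fiberwise criterion for a shifted line bundle, and Gorenstein duality for finite-type schemes over a field) carry over to algebraic stacks with finite diagonal. The base-change result \eqref{TI:coherent-duality:bc}, whose hypotheses are met here precisely because $f$ is flat, carries most of the load; the remainder leans on the finiteness results of \S\ref{S:finite} (notably Theorem~\ref{T:boundedness-stacks}) and on the coherent-duality formalism of \cite{MR4239176,MR4575464}.
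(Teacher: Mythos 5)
Your overall architecture is sound and correctly identifies the key reduction: everything hinges on showing $\omega_{X/S}=f^\times(\Orb_S)$ is a line bundle in degree $-d$, checked on fibers. However, the endgame diverges substantially from the paper, and that divergence is where a genuine gap sits.

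You reduce to the case of a proper, tame, Gorenstein stack $Z$ of pure dimension $d$ over a field $k$, and then invoke the exceptional‑pullback formalism: pseudofunctoriality of $(-)^!$ and the smooth‑morphism formula $u^!(-)\simeq\LDERF u^*(-)\otimes\omega_{U/Z}[\dim u]$, to transport the scheme computation on a smooth atlas $U\to Z$ down to $Z$. But the paper takes pains to define and use only $f^\times$, the unbounded right adjoint to $\RDERF f_{\qcsubscript,*}$ for \emph{concentrated} (in practice, proper tame) morphisms, and it flags explicitly that the $f^!$ of \cite{MR4575464} lives on a different category and is only a partial right adjoint. Your atlas map $u$ is smooth and not proper, so there is no $u^\times$ in the paper's framework, and the compatibility of $f^\times$ with $f^!$ needed to make your computation go through is precisely the foundational point the paper circumvents. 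The paper's way around this is a genuinely different argument: it applies Lemma~\ref{L:bridgeland-nakayama} a second time, with $f=\mathrm{id}_X$, to reduce to computing $\LDERF x^*f^\times k$ at closed $k$‑points of $X$; then it slices by an artinian Gorenstein closed subscheme $\tilde{x}\colon\spec B\hookrightarrow X$ through $x$ (cut out by a maximal regular sequence in a Gorenstein local ring of the atlas) and uses Theorem~\ref{T:coherent-duality}\eqref{TI:coherent-duality:formula} for the \emph{finite} composite $\beta=f\circ\tilde{x}$, so that $\beta^\times$ and $\tilde{x}^\times$ are both available. This stays entirely inside the proper/$f^\times$ world and never touches $u^!$ for a smooth map. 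If you wanted to keep your route, you would need to cite or prove a comparison of $f^\times$ with Grothendieck's $f^!$ on the relevant subcategory for proper tame stacks, together with the full smooth‑pullback formalism for non‑proper $u$; that is real additional work, not a one‑line appeal to \cite{MR4575464}.

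A second, smaller issue: your ``fiberwise criterion'' (a bounded complex of finite tor‑dimension over $S$ all of whose derived fibers are a line bundle in a fixed degree is itself such) presupposes finite tor‑dimension of $\omega_{X/S}$ over $S$, which you assert follows from concentratedness and \eqref{TI:coherent-duality:coherent}--\eqref{TI:coherent-duality:formula}, but this is not immediate (it is essentially equivalent to what you are trying to prove). The paper sidesteps the need for that hypothesis entirely by proving Lemma~\ref{L:bridgeland-nakayama}, which works for $F\in\DCAT^-_{\COH}(X)$ with no tor‑dimension assumption via the local criterion for flatness; you should invoke that lemma in place of the generic fiberwise criterion.

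In summary: your reduction to fibers, your descent of Gorenstein‑ness, and your deduction of \eqref{CI:gorenstein-duality:perfect} from \eqref{CI:gorenstein-duality:lines} are all fine and agree with the paper. The gap is the final identification of $\omega_Z$ over a field, where you lean on a compatibility between $f^\times$ and the exceptional pullback $f^!$ for non‑proper smooth morphisms of stacks that is not available in the paper's toolkit and is precisely what its local‑algebra slicing argument is engineered to avoid.
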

\begin{proof}
  By Theorem \ref{T:coherent-duality}\itemref{TI:coherent-duality:bc},
  we may assume that $S$ is affine and $L\simeq \Orb_S$. Since
  $\PERF(S)$ is the smallest thick subcategory containing $\Orb_S$, it
  suffices to prove that $f^{\times}\Orb_S[-d]$ is a line bundle. By
  Theorem
  \ref{T:coherent-duality}\itemref{TI:coherent-duality:coherent}, we
  also know that $f^{\times}\Orb_S[-d] \in \DCAT^b_{\COH}(X)$. By
  Lemma \ref{L:bridgeland-nakayama}, $f^{\times}\Orb_S[-d]$ is a line bundle on $X$ if
  and only if $\LDERF s_X^*f^{\times}\Orb_S[-d]$ is a line bundle
  on $X_{\ell}$ for all closed points $s \colon \spec \ell \to S$. Since
  $f$ is flat, it is tor-independent of all geometric closed points
  $s$. Hence, Theorem
  \ref{T:coherent-duality}\itemref{TI:coherent-duality:bc} and the
  previous assertion show that we are reduced to the situation where
  $S=\spec k$ and $k$ is an algebraically closed field and $X$ is Gorenstein of dimension $d$. Applying Lemma \ref{L:bridgeland-nakayama} again (with $f=\mathrm{id}_X$), we see that it suffices to prove that $\LDERF x^*f^\times k[-d]$ is a line bundle on $\spec k$ for all closed points $x\colon \spec k \to X$. Since $k$ is algebraically
  closed, $x$ factors through a smooth covering $U \to X$, with $U$ a
  Gorenstein and affine scheme of dimension $d$. Let $u\in U$ be the image of
  $x$. Since $\Orb_{U,u}$ is Gorenstein, it is Cohen--Macaulay, and so
  admits a regular sequence $(a_1,\dots, a_d)$ such that
  $B=\Orb_{U,u}/(a_1,\dots,a_d)$ is artinian and Gorenstein. Let
  $\tilde{x} \colon \spec B \to X$ be the induced map. Also let
  $\beta = f\circ \tilde{x}$, which is finite; then $\tilde{x}$ is
  finite and has tor-dimension $d$. By Theorem
  \ref{T:coherent-duality}\itemref{TI:coherent-duality:formula}:
  \[
    \beta^{\times}k \simeq \tilde{x}^\times f^{\times}k \simeq
    (\tilde{x}^\times\Orb_X) \tensor^{\LDERF}_{\Orb_{\spec B}} \LDERF
    \tilde{x}^*f^\times k.
  \]
  Since $B$ is Gorenstein of dimension $0$, $\beta^{\times}k$ is a
  (trivial) line bundle on $\spec B$.  A simple argument shows that
  $\LDERF \tilde{x}^*f^\times k$ must be a shift of a line bundle on
  $\spec B$. For the degree, we note that $\tilde{x}_*$ is $t$-exact
  and
  $\tilde{x}_*\tilde{x}^{\times}\Orb_{X} \simeq
  \SRHom_{\Orb_X}(\tilde{x}_*\Orb_{\spec B},\Orb_X)$, which lives in
  degree $d$ and so $\LDERF \tilde{x}^*f^\times k$ lives in degree
  $-d$; in particular, $\LDERF \tilde{x}^*f^\times k[-d]$ is a line
  bundle. It follows immediately that $\LDERF x^*f^\times k[-d]$ is a
  line bundle.
\end{proof}
The following variant of \cite[Lem.~4.3]{MR1651025} is used in the
proof of Corollary \ref{C:gorenstein-duality}. Note that flatness is
essential (e.g., it is obviously false for
$f\colon X = \spec \C[x]/(x) \to \spec \C[x]=S$ and
$\mathcal{E} = \Orb_X$ and $s=f$).
\begin{lemma}\label{L:bridgeland-nakayama}
  Let $f \colon X \to S$ be a flat morphism of locally noetherian
  algebraic stacks. Let $F \in \DCAT^-_{\COH}(X)$ be such that
  $\LDERF s_X^*F$ is a sheaf on $X_{\ell}$ for some point
  $s\colon \spec \ell \to S$ of $S$, where
  $s_X \colon X_{\ell} \to X$ denotes the projection and $\ell$ is a field. Then there is
  an open neighbourhood $U \subseteq X$ of $s$ such that $F_U$ is a
  coherent sheaf on $U$, flat over $S$.
\end{lemma}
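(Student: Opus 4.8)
The plan is to reduce to a Nakayama-type argument on the fibers, after first checking that $F$ is cohomologically concentrated in degree $0$ near $s$. Since the statement is local on $X$ and the hypotheses concern only a neighbourhood of (the image of) $s$, I would first choose a smooth presentation and work near a chosen point in the fiber; ultimately everything reduces to the case where $X=\spec B$ is a noetherian local ring, $S=\spec A$, $f$ is flat, $F$ is represented by a bounded-above complex of finite free $B$-modules, and $s$ corresponds to the maximal ideal so that $\ell$ is the residue field and $\LDERF s_X^*F = F\otimes_B^{\LDERF}\ell$ has cohomology only in degree $0$.

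The key step is the following: since $\COHO{i}(F\otimes_B^{\LDERF}\ell)=0$ for $i\neq 0$, a minimal free resolution argument (equivalently, Nakayama applied to the differentials of a minimal complex) shows that near the closed point $F$ is quasi-isomorphic to a single finite free $B$-module in degree $0$ placed in a neighbourhood; more precisely $\COHO{i}(F)=0$ for $i\neq 0$ in a punctured-spectrum-free neighbourhood because their supports are closed and do not contain the closed point, and $\COHO{0}(F)$ has projective dimension $0$ over the local ring by the Auslander--Buchsbaum-type consequence of $\Tor_i^B(\COHO{0}(F),\ell)=0$ for $i>0$. Shrinking $X$ to such a neighbourhood $U$, we conclude $F_U \simeq \COHO{0}(F)|_U$ is a coherent sheaf. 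Flatness of $F_U$ over $S$ then follows from the local criterion for flatness: $F\otimes_B^{\LDERF}\ell$ being a module (i.e. concentrated in degree $0$) together with $f$ flat gives, via base change and the spectral sequence computing $\Tor^A$, that $\Tor_i^A(\COHO{0}(F),\kappa(s))=0$ for $i>0$, hence $\COHO{0}(F)$ is $A$-flat in a neighbourhood.

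More structurally, I would phrase this using the standard fact (Stacks Project, Tag \spref{0CTU} and neighbours, or \cite[Lem.~4.3]{MR1651025} itself) that for $F\in\DCAT^-_{\COH}(X)$ with $X$ noetherian, the locus where $F$ is represented by a flat-over-$S$ coherent sheaf concentrated in degree $0$ is characterized fiberwise, and flatness of $f$ is exactly what lets one compare $\LDERF s_X^*F$ with the derived restriction over $S$. The passage from schemes to algebraic stacks is harmless here: flatness, coherence, and being a sheaf are all smooth-local, so pulling back along a smooth presentation $X'\to X$ (with $X'$ a scheme) and using that $\LDERF(s_{X'})^*$ of the pullback agrees with the pullback of $\LDERF s_X^*F$ reduces the stacky statement to the scheme case, which is \cite[Lem.~4.3]{MR1651025} (or its Stacks Project avatar).

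The main obstacle is bookkeeping rather than anything deep: one must be careful that "open neighbourhood $U\subseteq X$ of $s$" makes sense—$s$ is a morphism from a field, so $U$ should contain the image point—and that the neighbourhood produced on a smooth presentation descends to an open substack of $X$ (it does, since the bad locus is closed and $f$-saturated in the relevant sense, being the union of supports of coherence/flatness obstruction sheaves, all of which are compatible with smooth base change). The only genuinely mathematical point to get right is the interplay between the two vanishing statements—$\COHO{i}(F)=0$ for $i\neq 0$ and $\Tor^A$-vanishing—which is precisely the content of the local flatness criterion and is why the hypothesis is that $\LDERF s_X^*F$ is a \emph{sheaf} (in degree $0$) and not merely that $F$ itself is bounded.
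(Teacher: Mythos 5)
Your overall plan (reduce to the local affine case, apply a Nakayama-type argument together with the local flatness criterion) matches the paper's, and your final paragraph correctly isolates the crucial interplay between derived-restriction vanishing and $\Tor^A$-vanishing. However, there is a genuine gap in the middle step, and it stems from a misidentification of $\LDERF s_X^*F$.

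Writing $X=\spec B$ with $B$ local over $A$ local, the derived restriction to the fiber is
\[
  \LDERF s_X^*F \;\simeq\; F \otimes^{\LDERF}_B (B\otimes_A \ell) \;\simeq\; F\otimes^{\LDERF}_A \ell,
\]
where the second identification uses flatness of $A\to B$; this is the only place flatness of $f$ enters, and $B\otimes_A\ell$ is neither a field nor the residue field of $B$. You instead treat the hypothesis as $\COHO{i}(F\otimes^\LDERF_B\ell_B)=0$ for $i\neq 0$ (with $\ell_B$ the residue field of $B$), which is a strictly stronger condition. That stronger condition would indeed give, by minimal complexes, that $F$ is quasi-isomorphic to a finite free $B$-module near the closed point; but this is more than the lemma asserts and is not implied by the actual hypothesis. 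For instance, take $A=\ell$, $B=\ell[x]_{(x)}$, and $F=B/(x)$: here $B\otimes_A\ell=B$, so $\LDERF s_X^*F=F$ is certainly a sheaf, yet $F$ is not locally free and $F\otimes^{\LDERF}_B\ell_B$ is not concentrated in degree $0$.

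Your subsequent claim---that the supports of $\COHO{i}(F)$ for $i\neq 0$ are closed and miss the fiber---is asserted but not justified, and cannot be read directly off the hypothesis: only for the top degree $u_0$ do we have $\COHO{u_0}(F\otimes^{\LDERF}_A\ell)\simeq \COHO{u_0}(F)\otimes_A\ell$, by right-exactness. For lower degrees the Tor terms intervene, and one genuinely needs the sequence of steps the paper uses: (i) ordinary Nakayama (over $B$, via $\mathfrak{m}_AB\subseteq\mathrm{rad}(B)$) applied to the top nonvanishing cohomology to conclude $\trunc{>0}F\simeq 0$; (ii) the triangle $\trunc{<0}F\to F\to \COHO{0}(F)[0]$ tensored with $\ell$ and the resulting vanishing of $\Tor_1^A(\COHO{0}(F),\ell)$ to invoke the local flatness criterion for $\COHO{0}(F)$ over $A$; and then (iii) once $\COHO{0}(F)$ is $A$-flat, $(\trunc{<0}F)\otimes^{\LDERF}_A\ell\simeq 0$, and derived Nakayama kills $\trunc{<0}F$. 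Your write-up has the ingredients (ii) but replaces (i) and (iii) with an incorrect freeness-over-$B$ claim. Repairing the proof essentially reproduces the paper's argument. The reduction from stacks to the affine scheme case via smooth presentations is fine and agrees with the paper.
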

\begin{proof}
  This is all local on $S$ and $X$ and can be further checked after
  localization. Hence, we may assume that $S=\spec A$, where $A$ is
  local with maximal ideal $\mathfrak{m}$, $X=\spec B$ and
  $\ell = A/\mathfrak{m}$. It suffices to prove that if
  $F \tensor^{\LDERF}_B (B \tensor_A A/\mathfrak{m})$ is a module,
  then $F$ is a module that is flat over $A$. Since $B$ is flat over $A$,
  \[
    F \tensor^{\LDERF}_B (B \tensor_A A/\mathfrak{m}) \simeq F
    \tensor^{\LDERF}_B (B \tensor_A^{\LDERF} A/\mathfrak{m}) \simeq F
    \tensor^{\LDERF}_A A/\mathfrak{m}.
  \]
  Choose $u_0\geq 0$ such that $\COHO{u}(F) = 0$ for all $u>u_0$; then
  $0=\COHO{u_0}(F \tensor^{\LDERF}_A A/\mathfrak{m}) \simeq
  \COHO{u_0}(F) \tensor_A A/\mathfrak{m}$. By Nakayama's Lemma,
  $\COHO{u_0}(F) = 0$ unless $u_0 \leq 0$. Next observe that there is a distinguished triangle
  \[
    \trunc{<0}F \to  F \to \COHO{0}(F)[0] \to \trunc{<0}F[1].
  \]
  The cohomology sequence associated to the
  functor: $-\tensor_A^{\LDERF} A/\mathfrak{m}$ gives an exact sequence
  \[
    \COHO{-1}(F \tensor_A^{\LDERF} A/\mathfrak{m}) \to \Tor_1^A(\COHO{0}(F),A/\mathfrak{m}) \to \COHO{0}(\trunc{<0}F \tensor^{\LDERF}_AA/\mathfrak{m}).
  \]
  The outside terms of this clearly vanish, so we have
  $\Tor_1^A(\COHO{0}(F),A/\mathfrak{m}) = 0$. Since $\COHO{0}(F)$ is a
  finitely generated $B$-module, it follows from the local criterion
  for flatness that $\COHO{0}(F)$ is flat over $A$. In particular,
  $F \tensor^{\LDERF}_A A/\mathfrak{m} \simeq \COHO{0}(F)
  \tensor^{\LDERF}_A A/\mathfrak{m}$ and so
  $(\trunc{<0}F) \tensor^{\LDERF}_A A/\mathfrak{m} \simeq 0$. The
  (derived) Nakayama Lemma implies that $\trunc{<0}F = 0$ and we have
  the claim.  
\end{proof}

\section{The stack of coherent sheaves}
Let $f\colon X \to S$ be a morphism of algebraic stacks. If $f$ is a
smooth and separated morphism of algebraic spaces of finite
presentation, then the diagonal
$\Delta_{X/S} \colon X \to X\times_S X$ is a closed immersion and the
resulting morphism $X \to \underline{\mathsf{Hilb}}_{X/S}$ to the
Hilbert functor is an open
immersion.

In general, there is the \emph{stack of coherent sheaves over $S$},
$\underline{\COH}_{X/S} \to S$, which assigns to each $T \to S$ the
groupoid of finitely presented $\Orb_{T\times_S X}$-modules that are
flat over $T$. If $f$ is proper and of finite presentation, then
$\underline{\COH}_{X/S}\to S$ is a morphism of algebraic stacks that
is locally of finite presentation with affine diagonal
\cite[Thm.~8.1]{MR3589351}.
\begin{proposition}\label{P:key}
  Let $f\colon X \to S$ be a proper and smooth morphism of algebraic
  stacks with finite diagonal. Then $(\Delta_{f})_*\Orb_X$, viewed as
  a finitely presented $\Orb_{X\times_S X}$-module that is flat over
  $X$, defines a smooth morphism
  $\delta_f \colon X \to \underline{\COH}_{X/S}$.
\end{proposition}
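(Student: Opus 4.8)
The plan is to show that the map $\delta_f\colon X \to \underline{\COH}_{X/S}$ classifying the structure sheaf of the diagonal is smooth by verifying the infinitesimal criterion, which here amounts to a deformation-theoretic computation. First I would reduce to the case where $S = \spec A$ with $A$ a local artinian ring; this is legitimate since $\underline{\COH}_{X/S}\to S$ is locally of finite presentation (by \cite[Thm.~8.1]{MR3589351}) and $\delta_f$ is a morphism of algebraic stacks locally of finite presentation over $S$, so smoothness can be tested via lifting along square-zero extensions $A' \twoheadrightarrow A$ with kernel a finite $A$-module $I$ killed by the maximal ideal. An $A$-point of $\underline{\COH}_{X/S}$ near $\delta_f$ is a finitely presented $\Orb_{X_A\times_A X_A}$-module, flat over the first factor $X_A$, that reduces to $(\Delta_f)_*\Orb_{X}$; the content is that the deformation theory of such a module, relative to deformations of the first-factor $X$, is unobstructed and the tangent space is as large as it should be.

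The key computation is the identification of the relevant obstruction and tangent spaces. Writing $p_1, p_2\colon X\times_S X \to X$ for the two projections and $\Delta = (\Delta_f)_*\Orb_X$, the deformations of $\Delta$ as a $p_1$-flat sheaf over a fixed $X$ are governed by $\Ext^1$ and $\Ext^2$ groups of $\Delta$ with itself computed on the fibers of $p_1$ — i.e.\ by $\mathscript{E}xt$-sheaves $\SExt^i_{p_1}(\Delta,\Delta)$ pushed to $X$. Because $\Delta$ is the structure sheaf of the diagonal and $f$ is smooth, the diagonal $\Delta_f\colon X \to X\times_S X$ is a regular (even a lci) closed immersion of codimension $n = \dim(X/S)$, and the standard Koszul/self-intersection computation gives $\SExt^i_{\Orb_{X\times_S X}}(\Delta,\Delta) \simeq (\Delta_f)_*\wedge^i N$, where $N = N_{X/X\times_S X}\simeq T_{X/S}$ is the relative tangent bundle. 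Restricting to fibers of $p_1$ (equivalently, applying $\RDERF p_{1,*}$ to $\SRHom(\Delta, \Delta\ltensor p_1^*(-))$ and using $\RDERF p_{1,*}(\Delta_f)_* = \mathrm{id}$ on $X$ since $p_1\circ \Delta_f = \mathrm{id}_X$), the fiberwise $\Ext$-sheaves collapse: $\SExt^i_{p_1}(\Delta,\Delta) \simeq \wedge^i T_{X/S}$ as $\Orb_X$-modules. In particular the local-to-global spectral sequence degenerates over the artinian base and one finds that the deformations of $\Delta$ along $p_1$-flat families are always unobstructed — the obstruction classes live in an $\Ext^2$ that, after the collapse, maps to something controlled by $T_{X/S}$ and is absorbed by simultaneously deforming $X$ — with tangent space $\Hom_{\Orb_X}(\Orb_X, T_{X/S}) = \Gamma(X, T_{X/S})$. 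Matching this against the deformations of the $X$-point of $\underline{\COH}_{X/S}\to S$, which by \cite[Thm.~8.1]{MR3589351} and the construction of $\underline{\COH}$ are precisely the $p_1$-flat deformations of $\Delta$, shows the lifting problem for $\delta_f$ is always solvable, i.e.\ $\delta_f$ is formally smooth; combined with local finite presentation, $\delta_f$ is smooth.

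The main obstacle will be handling the relative $\SExt$ and the base change for the projection $p_1$ carefully in the stacky setting: $p_1$ is proper (since $f$ is) with finite diagonal but need not be tame or representable, so one must invoke the machinery of $\RDERF p_{1,\qcsubscript,*}$, tor-independent base change \cite[Cor.~4.13]{perfect_complexes_stacks}, and the coherence/finiteness results of \S\ref{S:finite} (Proposition \ref{P:necessary}, Theorem \ref{T:boundedness-stacks}) to ensure the fiberwise $\Ext$-sheaves are coherent and that forming them commutes with the artinian base changes $A'\to A$. The finiteness of the diagonal of $f$ is exactly what guarantees that $X\times_S X$ is well-behaved enough (noetherian, with the required finiteness of $\RDERF p_{1,*}$) for these computations to go through; the smoothness and regularity of $\Delta_f$ then make the Koszul computation of $\SExt^\bullet(\Delta,\Delta)$ formally identical to the classical case, so once the base-change bookkeeping is in place the deformation-theoretic conclusion follows as for schemes.
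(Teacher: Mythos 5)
Your proposal takes a fundamentally different route from the paper. The paper's proof is a direct lifting argument: given a compatible pair $a_0\colon \spec A_0 \to X$ and $a\colon \spec A \to \underline{\COH}_{X/S}$ over a square-zero thickening $A\twoheadrightarrow A_0$, one first lifts $a_0$ to some $\tilde{a}\colon \spec A \to X$ using only the smoothness of $X\to S$, and then shows that the given family $F$ can be identified with $\tilde{a}^*_{X\times_S X}(\Delta_f)_*\Orb_X$ compatibly with $\lambda_0$; after affine base change and adjunction, this reduces to lifting a map $\tilde{h}^*F \to i_*\Orb_{\spec A_0}$ through the surjection $\Orb_{\spec A}\twoheadrightarrow i_*\Orb_{\spec A_0}$, which is possible because $\tilde{h}^*F$ is a (projective) vector bundle over the local ring $A$. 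No $\SExt$-sheaves, Koszul resolutions, local-to-global spectral sequences, or coherence theorems for $\RDERF p_{1,\qcsubscript,*}$ are invoked.

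The central step of your proposal does not survive scrutiny in the stacky setting. The identification $\SExt^i_{\Orb_{X\times_S X}}(\Delta,\Delta)\simeq (\Delta_f)_*\wedge^i N$ with $N\simeq T_{X/S}$ is the self-intersection formula for a \emph{regular closed immersion}. But for an algebraic stack with nontrivial stabilizers, $\Delta_f\colon X\to X\times_S X$ is only a finite unramified morphism; it is a closed immersion precisely when $X$ is an algebraic space. Consequently $(\Delta_f)_*\Orb_X$ is not the structure sheaf of a closed substack, and the Koszul resolution you are implicitly using does not exist. Already for $X=BG$ over $S=\spec k$ with $G$ a finite group of order invertible in $k$, the sheaf $(\Delta_f)_*\Orb_{BG}$ corresponds to the $(G\times G)$-representation $\mathrm{Ind}^{G\times G}_{G}k\simeq k[G]$, and $\SHom((\Delta_f)_*\Orb,(\Delta_f)_*\Orb)$ is the algebra of class functions on $G$ rather than $\Orb_{X\times_S X}$-restricted-to-the-diagonal; the Koszul prediction $\wedge^0 T_{X/S}\simeq\Orb_X$ is already wrong in degree zero. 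Since the Proposition's hypotheses expressly allow stacks with nontrivial finite stabilizers, this failure cannot be brushed aside. Even setting this aside, the passage from the (claimed) tangent-space computation to formal smoothness is not established: you would need to show that the map $T_{X/S}\to \delta_f^*T_{\underline{\COH}_{X/S}/S}$ is the specific isomorphism you describe rather than merely a map between modules of the same rank, and you would need a genuine argument that relative obstruction classes vanish, not the assertion that they are ``absorbed by simultaneously deforming $X$.'' The paper avoids all of this with a short, elementary argument.
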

\begin{proof}
  Since $f$ is of finite presentation with finite diagonal, $\Delta_f$
  is finite and of finite presentation. It follows that
  $(\Delta_{f})_*\Orb_X$ is a finitely presented
  $\Orb_{X\times_S X}$-module that is flat over $X$ via the first projection.

  We next check that the morphism is smooth using the infinitesimal
  lifting criterion. Let $A$ be a strictly henselian local ring and
  $I \subseteq A$ a square zero ideal. Let $A_0 =A/I$; then we must
  complete the diagram:
  \[
    \xymatrix{\spec A_0 \ar[d]_i \ar[r]^{{a}_0} & X \ar[d]^{\delta_f}
      \\\spec A \ar[r]_a \ar@{-->}[ur]& \underline{\COH}_{X/S}.}
  \]
  In other words, we must show that if we are given a finitely
  presented $\Orb_{\spec A \times_S X}$-module $F$ that is flat over
  $\spec A$ together with an isomorphism
  $\lambda_0 \colon F_{\spec A_0 \times_S X} \simeq (a_0)_{X\times_S
    X}^*(\Delta_f)_*\Orb_X$, then $a_0$ lifts to a morphism
  $\tilde{a} \colon \spec A \to X$ and $\lambda_0$ lifts to an
  isomorphism
  $\lambda \colon F \simeq \tilde{a}^*_{X\times_S
    X}(\Delta_f)_*\Orb_X$. Since $X \to S$ is smooth, we can lift the
  composition $\spec A \to \underline{\COH}_{X/S} \to S$ to
  $\tilde{a} \colon \spec A \to X$. Now form the following cartesian
  diagram:
  \[
    \xymatrix@C+2pc{\spec A_0 \ar[d]_{h_0} \ar@/^2pc/[rr]^-{a_0} \ar[r]_{i} & \spec A \ar[d]^{\tilde{h}}\ar[r]_{\tilde{a}} & X \ar[d]^{\Delta_f}\\
      \spec A_0 \times_S X \ar@/_2pc/[rr]_-{(a_0)_{X\times_S X}} \ar[r]^{i_{\spec A \times_S X}} & \spec A \times_S X  \ar[r]^{\tilde{a}_{X\times_S X}} &
      X\times_{S} X,}
  \]
  where $\tilde{h} = (\mathrm{id},\tilde{a})$ and
  $h_0 = (\mathrm{id},a_0)$.  Thus, it suffices to complete the
  following diagram:
  \[
    \xymatrix@C+4pc{ F \ar@{-->}[r]^{\exists\lambda} \ar@{->>}[d] &
      \tilde{a}^*_{X\times_S X}(\Delta_f)_*\Orb_X
      \ar@{->>}[d]\\
      (i_{\spec A \times_S X})_*F_{\spec A_0 \times_S X}
      \ar[r]_-{(i_{\spec A \times_S X})_*\lambda_0} & (i_{\spec A \times_S X
        })_*(a_0)_{X\times_S X}^*(\Delta_f)_*\Orb_X, }
  \]
  where the right vertical arrow is the composition:
  \[
    \tilde{a}^*_{X\times_S X}(\Delta_f)_*\Orb_X \twoheadrightarrow (i_{\spec A \times_S X
      })_*i_{\spec A \times_S X}^*\tilde{a}^*_{X\times_S
      X}(\Delta_f)_{*}\Orb_X \simeq (i_{\spec A \times_S X})_*(a_0)^*_{X\times_S X}(\Delta_f)_{*}\Orb_X
  \]
  By affine base change,
  \begin{align*}
    \tilde{a}^*_{X\times_S X}(\Delta_f)_{*}\Orb_X &\simeq \tilde{h}_{*}\Orb_{\spec A} \quad \mbox{and}\\
    (i_{\spec A \times_S X})_*(a_0)_{X\times_S X}^*(\Delta_f)_{*}\Orb_X &\simeq (i_{\spec A \times_S X})_*(h_0)_{*}\Orb_{\spec A_0} \simeq \tilde{h}_{*}i_{*}\Orb_{\spec A_0} 
  \end{align*}
  Thus, adjunction says that it is sufficient to lift
  \[
    \xymatrix{\tilde{h}^*F \ar@{-->}[r] \ar[dr] &
      \Orb_{\spec A} \ar@{->>}[d]\\ &
      i_{*}\Orb_{\spec A_0}.}
  \]
  Since $\spec A$ is affine, it remains to prove that $\tilde{h}^*F$
  is a vector bundle on $\spec A$. First we note that $\tilde{h}^*F$
  is a finitely presented $\Orb_{\spec A}$-module, so it remains to
  prove that it is flat, which is clear. 
\end{proof}
\section{Fourier--Mukai transforms}\label{S:FM}
In this section we discuss Fourier--Mukai transforms in the relative
situation for algebraic stacks. Much of this is likely folklore,
cf. \cite{MR2669705,MR2505443}. Consider a cartesian diagram of
algebraic stacks:
\[
  \xymatrix{& \ar[dl]_{p} X\times_S Y \ar[dr]^{q} & \\ X \ar[dr]_f & & Y \ar[dl]^g \\ & S. &}
\]
Let $K \in \DQCOH(X\times_S Y)$. Then we may form the \emph{Fourier--Mukai functor} or \emph{integral transform}
\[
  \Phi_{K,X\to Y}(-) = \RDERF q_{\qcsubscript,*} (K
  \otimes^{\LDERF}_{\Orb_{X\times_S Y}} \LDERF p^*(-)) \colon
  \DQCOH(X) \to \DQCOH(Y).
\]
If $h \colon Z \to S$ is another morphism and
$L \in \DQCOH(Y\times_S Z)$, let
$\pi_{12} \colon X\times_S Y \times_S Z \to X\times_S Y$ and similarly
for $\pi_{13}$, $\pi_{23}$ be the associated projections. Let
$r \colon Y\times_S Z \to Y$ and $s \colon Y\times_S Z \to Z$ be the
projections. Assume that $r$ and $q$ are tor-independent and $f$, $g$,
$h$ are concentrated.  Then the projection formula and tor-independent
base change \cite[\S4]{perfect_complexes_stacks} give the well-known
formula:
\begin{align*}
  \Phi_{L,Y \to Z} \circ \Phi_{K,X\to Y}(M)
                                            &\simeq \Phi_{\RDERF \pi_{13,\qcsubscript,*}(\LDERF \pi_{23}^*L \tensor^{\LDERF}_{\Orb_{X\times_S Y\times_S Z}} \LDERF \pi_{12}^*K), X \to Z}(M).
\end{align*}
Hence, the composition of Fourier--Mukai transforms is a Fourier--Mukai transform. We now have remarks about preservation of coherence. 
\begin{remark}\label{R:bounded-above}
  If $X$ and $Y$ are noetherian, $q$ is proper and tame, and
  $K \in \DCAT^-_{\COH}(X\times_S Y)$, then $\Phi_{K,X\to Y}$ sends
  $\DCAT^-_{\COH}(X)$ to $\DCAT^-_{\COH}(Y)$ (Proposition
  \ref{P:necessary}).
\end{remark}
\begin{remark}\label{R:bounded}
  Let $W \to T$ be a flat and locally of finite presentation morphism
  of algebraic stacks. We say that $P \in \DQCOH(W)$ is
  \emph{$T$-perfect} if it is pseudocoherent on $W$ and locally of
  finite tor-dimension over $T$ \cite[Tag
  \spref{0DI0}]{stacks-project}. In particular, if $T$ is the spectrum
  of a field and $W$ is quasicompact, then $P$ is $T$-perfect if and
  only if $P \in \DCAT^b_{\COH}(W)$. Now assume that $X$ and $Y$ are
  noetherian; $q$ is proper, flat and tame; $p$ is flat and of finite
  type; and $K \in \DQCOH(X\times_S Y)$ is $X$-perfect. Then the
  restriction of $\Phi_{K,X\to Y}$ to $\DCAT^b_{\COH}(X)$ factors
  through $\DCAT^b_{\COH}(Y)$ (Proposition \ref{P:necessary}).
\end{remark}
The following lemma will be useful.
\begin{lemma}\label{L:bounded-perfect-smooth}
  Let $W \to T$ be a flat and locally of finite presentation morphism
  of algebraic stacks. Let $T \to S$ be a smooth morphism of algebraic
  stacks. Let $K \in \DQCOH(W)$ be pseudocoherent. Then $K$ is
  $S$-perfect if and only if it is $T$-perfect.
\end{lemma}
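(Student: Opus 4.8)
The plan is to reduce at once to a statement in commutative algebra and then exploit the diagonal of the smooth morphism $T\to S$. All the conditions in play---pseudocoherence of $K$ on $W$, local finiteness of tor-dimension over $T$ (resp.\ $S$), and smoothness of $T\to S$---are smooth-local on $W$, on $T$, and on $S$; note also that the composite $W\to S$ is flat and locally of finite presentation, so that ``$S$-perfect'' makes sense. Hence I may assume $W=\spec B$, $T=\spec A$, $S=\spec R$ with $R\to A$ smooth and $A\to B$ flat of finite presentation, and $K$ a pseudocoherent complex of $B$-modules. Since pseudocoherence of $K$ on $W$ is a hypothesis common to both ``$S$-perfect'' and ``$T$-perfect'', it suffices to prove that $K$ has locally finite tor-dimension over $R$ (via $R\to B$) if and only if it has locally finite tor-dimension over $A$ (via $A\to B$).

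For the direction ``finite tor-dimension over $A$ $\Rightarrow$ finite tor-dimension over $R$'', observe that $A$ is flat over $R$ because $R\to A$ is smooth. Thus for any $R$-module $N$ one has $K\otimes^{\LDERF}_R N\simeq K\otimes^{\LDERF}_A(A\otimes_R N)$, with $A\otimes_R N$ an $A$-module; so if $K$ has tor-amplitude in $[a,b]$ over $A$ it has tor-amplitude in $[a,b]$ over $R$. This direction uses only flatness of $T\to S$.

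For the converse, let $n$ be an integer bounding the relative dimension of the (finitely presented, quasicompact) morphism $\spec A\to\spec R$. Since $R\to A$ is smooth, the diagonal $\Delta\colon\spec A\to\spec(A\otimes_R A)$ is a quasi-regular immersion whose conormal module $\Omega_{A/R}$ is finite locally free of rank at most $n$; Zariski-locally on $\spec(A\otimes_R A)$ its ideal is generated by a regular sequence of length at most $n$, and away from the diagonal $A$ vanishes as an $A\otimes_R A$-module, so $A$ has tor-amplitude contained in $[-n,0]$ over $A\otimes_R A$. Now for an arbitrary $A$-module $N$, the standard comparison between relative Tor and external Tor (using once more that $A$ is $R$-flat, so $A\otimes^{\LDERF}_R A=A\otimes_R A$) yields
\[
  K\otimes^{\LDERF}_A N \simeq (K\otimes^{\LDERF}_R N)\otimes^{\LDERF}_{A\otimes_R A}A .
\]
If $K$ has tor-amplitude in $[a,b]$ over $R$, then $K\otimes^{\LDERF}_R N$ has cohomology concentrated in $[a,b]$, and tensoring over $A\otimes_R A$ with $A$ moves this into $[a-n,b]$; as this bound is independent of $N$, the complex $K$ has tor-amplitude in $[a-n,b]$ over $A$. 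This proves the claim and hence the lemma.

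The main obstacle is the second direction: one must know that the diagonal of a smooth morphism is a quasi-regular immersion of bounded codimension---so that $A$ is of finite, uniformly bounded, tor-dimension over $A\otimes_R A$---and one must correctly invoke the identity $K\otimes^{\LDERF}_A N\simeq(K\otimes^{\LDERF}_R N)\otimes^{\LDERF}_{A\otimes_R A}A$; granting these, the tor-amplitude bookkeeping is routine. For schemes a version of this statement is essentially contained in the Stacks project's treatment of relatively perfect complexes, but the diagonal argument above makes the stacky case immediate after smooth-local reduction.
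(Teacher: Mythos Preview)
Your proof is correct but follows a genuinely different route from the paper's. Both arguments begin by reducing to the affine situation and dispatch the direction ``$T$-perfect $\Rightarrow$ $S$-perfect'' identically, using only flatness of $T\to S$.

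For the harder direction the paper argues pointwise: with its notation $S=\spec A$, $T=\spec B$, $W=\spec C$, it invokes \cite[Tag 0GHJ]{stacks-project} to reduce finiteness of tor-dimension over $B$ to checking that $K\otimes^{\LDERF}_B\kappa(\mathfrak p)$ is bounded below for every prime $\mathfrak p\subseteq B$; it then factors through the fibre ring $B\otimes_A\kappa(\mathfrak q)$ (with $\mathfrak q=\mathfrak p\cap A$), which is regular of finite global dimension because $T\to S$ is smooth. Your argument is global: you exploit that the diagonal of the smooth morphism is a quasi-regular immersion of codimension $\le n$, so the middle ring has tor-amplitude $[-n,0]$ over its self-tensor-product, and combine this with the Hochschild-type identity $K\otimes^{\LDERF}_A N\simeq (K\otimes^{\LDERF}_R N)\otimes^{\LDERF}_{A\otimes_R A}A$ (in your notation). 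Your approach yields an explicit bound---the tor-amplitude over $T$ is at worst $[a-n,b]$ if it is $[a,b]$ over $S$---which the paper's fibrewise argument does not directly produce; on the other hand, the paper's route avoids the change-of-rings identity and instead leans on a standard residue-field criterion. Both are clean; yours is arguably the more conceptual of the two.
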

\begin{proof}
  We immediately reduce to the situation where $W=\spec C$,
  $T=\spec B$, and $S=\spec A$. If $K$ is $B$-perfect and $M$ is an
  $A$-module, then
  $K \tensor^{\LDERF}_A M \simeq K \tensor^{\LDERF}_B B
  \tensor^{\LDERF}_A M \simeq K \tensor^{\LDERF}_B (B \tensor_A M)$ by
  flatness of $A \to B$. In particular, the $B$-tor-amplitude of $K$
  bounds its $A$-tor-amplitude, which proves one direction. For the
  other direction, we apply \cite[Tag
  \spref{0GHJ}]{stacks-project}. Since $K$ is $A$-perfect, it is
  bounded below, so we just need to show that
  $K \tensor^{\LDERF}_B \kappa(\mathfrak{p})$ is bounded below for all
  primes $\mathfrak{p} \subseteq B$. Thus, let
  $\mathfrak{p} \subseteq B$ be a prime and let
  $\mathfrak{q} \subseteq A$ be its contraction. Then
  \[
    K \tensor^{\LDERF}_B \kappa(\mathfrak{p}) \simeq K
    \tensor^\LDERF_B (B \tensor_A \kappa(\mathfrak{q}))
    \tensor^{\LDERF}_{B \tensor_A \kappa(\mathfrak{q})}
    \kappa(\mathfrak{p}) \simeq (K \tensor^{\LDERF}_A
    \kappa(\mathfrak{q})) \tensor^{\LDERF}_{B \tensor_A
      \kappa(\mathfrak{q})} \kappa(\mathfrak{p}).
  \]
  Since $K$ is $A$-perfect, 
  $K \tensor^{\LDERF}_A \kappa(\mathfrak{q})$ is a bounded
  below complex of $B \tensor_A \kappa(\mathfrak{q})$-modules. But
  $T \to S$ is smooth, $B \tensor_A \kappa(\mathfrak{q})$ is a regular
  ring of finite projective dimension, so tensoring with
  $\kappa(\mathfrak{p})$ will remain bounded below.
\end{proof}
Using Theorem \ref{T:coherent-duality} and Corollary
\ref{C:gorenstein-duality}, we can now compute the adjoints to
Fourier--Mukai functors. These formulas are well-known for schemes.
\begin{proposition}{\par\noindent}\label{P:adjoints-FM}
\begin{enumerate}
\item \label{PI:adjoints-FM:RIGHT} Assume that $Y$ is noetherian and concentrated.  If $q$ is
  proper and tame of finite tor-dimension, then $\Phi_K$ admits a
  right adjoint $H$. If $K$ is perfect, then
  $H\simeq \Phi_{K^\vee \tensor^{\LDERF}_{\Orb_{X\times_S Y}} q^{\times}\Orb_Y}$. 
\item \label{PI:adjoints-FM:LEFT} Assume that $X$ is noetherian and concentrated and $K$ is
  perfect. If $p$ is proper, tame, and flat with geometrically
  Gorenstein fibers, then
  $\Phi_{K^\vee \otimes^{\LDERF}_{\Orb_{X\times_S Y}}
    p^{\times}\Orb_X}$ is left adjoint to $\Phi_K$.
\end{enumerate}
\end{proposition}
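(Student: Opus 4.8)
The plan is to reduce both statements to the adjunction data already assembled in the paper, namely the $(\RDERF q_{\qcsubscript,*}, q^\times)$ and $(\RDERF p_{\qcsubscript,*}, p^\times)$ adjunctions together with the projection formula and tor-independent base change from \cite[\S4]{perfect_complexes_stacks}. For \itemref{PI:adjoints-FM:RIGHT}, first note that since $Y$ is noetherian and concentrated and $q$ is proper, tame and of finite tor-dimension, $\RDERF q_{\qcsubscript,*}$ sends compacts to compacts (Corollary \ref{C:perfect-tor-dim}); hence $\Phi_K = \RDERF q_{\qcsubscript,*}(K \tensor^{\LDERF} \LDERF p^*(-))$ preserves coproducts when $K$ is, say, arbitrary in $\DQCOH$, and by Brown representability it admits a right adjoint $H$. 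To identify $H$ when $K$ is perfect, I would chase adjunctions: for $N \in \DQCOH(Y)$ and $M \in \DQCOH(X)$,
\[
  \RHom_Y(\Phi_K M, N) \simeq \RHom_Y(\RDERF q_{\qcsubscript,*}(K \tensor^{\LDERF} \LDERF p^* M), N) \simeq \RHom_{X\times_S Y}(K \tensor^{\LDERF} \LDERF p^* M, q^\times N).
\]
Since $K$ is perfect, $K \tensor^{\LDERF} (-)$ has right adjoint $K^\vee \tensor^{\LDERF}(-)$, so this is $\RHom_{X\times_S Y}(\LDERF p^* M, K^\vee \tensor^{\LDERF} q^\times N) \simeq \RHom_X(M, \RDERF p_{\qcsubscript,*}(K^\vee \tensor^{\LDERF} q^\times N))$, which is the claimed Fourier--Mukai description once one rewrites $q^\times N \simeq q^\times \Orb_Y \tensor^{\LDERF} \LDERF q^* N$ using Theorem \ref{T:coherent-duality}\itemref{TI:coherent-duality:formula} and applies the projection formula to move $\LDERF q^* N$ past $\RDERF p_{\qcsubscript,*}$; note $H$ is itself a Fourier--Mukai functor with kernel $K^\vee \tensor^{\LDERF}_{\Orb_{X\times_S Y}} q^\times \Orb_Y$ with the roles of $p,q$ swapped.

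For \itemref{PI:adjoints-FM:LEFT}, the strategy is symmetric but uses that $p$ is flat with geometrically Gorenstein fibers so that $p^\times \Orb_X$ is (a shift of) a line bundle by Corollary \ref{C:gorenstein-duality}\itemref{CI:gorenstein-duality:lines}, and in particular perfect; this is what makes the candidate kernel $K^\vee \tensor^{\LDERF} p^\times \Orb_X$ perfect and hence its Fourier--Mukai functor well-behaved. I would verify the adjunction $\RHom_Y(N, \Phi_K M) \simeq \RHom_X(\Phi_{K^\vee \tensor^{\LDERF} p^\times \Orb_X, Y\to X} N, M)$ by the analogous chase: expand $\RHom_Y(N, \RDERF q_{\qcsubscript,*}(K \tensor^{\LDERF} \LDERF p^* M))$, but now instead of using $q^\times$ as a right adjoint one uses that $\RDERF q_{\qcsubscript,*}$ has $\LDERF q^*$ as a \emph{left} adjoint after rewriting via duality; concretely, since $p$ is proper, tame and Gorenstein, $p^\times$ differs from $\LDERF p^*$ by a twist and a shift, and this lets one convert the "push-pull along $q$ then $p$" into its transpose. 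The key identity to invoke is Theorem \ref{T:coherent-duality}\itemref{TI:coherent-duality:formula} for $p$, together with the projection formula and base change, exactly as in the scheme case.

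The main obstacle is bookkeeping rather than conceptual: one must be careful that all the tor-independence and concentratedness hypotheses needed to apply the projection formula, base change, and the existence of $q^\times$ (resp.\ $p^\times$) are in force at each step — e.g., $p$ and $q$ in a fiber product over $S$ are automatically tor-independent, and properness/tameness of $q$ (resp.\ $p$) is what licenses Corollary \ref{C:perfect-tor-dim} and Theorem \ref{T:coherent-duality}. A secondary subtlety is that for \itemref{PI:adjoints-FM:LEFT} we have not assumed $q$ has finite tor-dimension, so one should phrase the left adjoint statement so that it only asserts $\Phi_{K^\vee \tensor^{\LDERF} p^\times \Orb_X}$ is left adjoint on the relevant subcategories where everything converges, or else observe that perfectness of $K$ and of $p^\times \Orb_X$ already gives enough finiteness. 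Modulo these checks, the argument is the standard duality-theoretic computation of Fourier--Mukai adjoints, now available over algebraic stacks thanks to the foundational results of \S\ref{S:finite} and \S3.
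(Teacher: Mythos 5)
The paper does not print a proof of Proposition \ref{P:adjoints-FM} (it is left to the reader as an assembly of the adjunctions already established), so the comparison is against the standard argument, which you have reconstructed correctly in outline. Your plan for both parts is right: for \itemref{PI:adjoints-FM:RIGHT}, chase $\LDERF p^* \dashv \RDERF p_{\qcsubscript,*}$, $K\tensor^{\LDERF}(-) \dashv K^\vee \tensor^{\LDERF}(-)$ (using perfectness of $K$), and $\RDERF q_{\qcsubscript,*} \dashv q^\times$, then rewrite $q^\times N$ as $q^\times\Orb_Y \tensor^{\LDERF} \LDERF q^* N$ via Theorem \ref{T:coherent-duality}\itemref{TI:coherent-duality:formula}; for \itemref{PI:adjoints-FM:LEFT}, chase $\LDERF q^* \dashv \RDERF q_{\qcsubscript,*}$, the two-sided perfect-tensor adjunction, and then use that $p^\times \Orb_X$ is a shifted line bundle (Corollary \ref{C:gorenstein-duality}\itemref{CI:gorenstein-duality:lines}) to trade $\LDERF p^*$ for $p^\times$ and close with $\RDERF p_{\qcsubscript,*} \dashv p^\times$.

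Two points of imprecision worth tightening. First, in \itemref{PI:adjoints-FM:RIGHT} you write that since $\RDERF q_{\qcsubscript,*}$ sends compacts to compacts (Corollary \ref{C:perfect-tor-dim}), \emph{hence} $\Phi_K$ preserves coproducts; this is a non sequitur. Preservation of compacts by a left adjoint is equivalent to preservation of coproducts by its \emph{right} adjoint $q^\times$, not by $\RDERF q_{\qcsubscript,*}$ itself. What you actually want here is that $\RDERF q_{\qcsubscript,*}$ preserves coproducts because $q$ is concentrated. In any event, Brown representability is an unnecessary detour: the right adjoint exists simply because $\Phi_K$ is a composite of three functors each of which has a right adjoint, namely $H \simeq \RDERF p_{\qcsubscript,*} \circ \SRHom_{\Orb_{X\times_S Y}}(K, -) \circ q^{\times}$; the hypotheses on $q$ and Theorem \ref{T:coherent-duality} are then used only to recognize $H$ as a Fourier--Mukai functor with the stated kernel when $K$ is perfect. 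Second, in \itemref{PI:adjoints-FM:LEFT} the phrase ``$\RDERF q_{\qcsubscript,*}$ has $\LDERF q^*$ as a \emph{left} adjoint after rewriting via duality'' is backwards (no duality is needed for that adjunction, and $\LDERF q^*$ is the left adjoint outright); the duality input is entirely on the $p$-side, where invertibility of $p^\times\Orb_X$ lets you write $\LDERF p^* M \simeq (p^\times\Orb_X)^{-1}\tensor^{\LDERF} p^\times M$ and thereby reduce the last $\RHom$ to $\RHom_X(\RDERF p_{\qcsubscript,*}((K^\vee\tensor^{\LDERF}\LDERF q^*N)\tensor^{\LDERF} p^\times\Orb_X), M)$. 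With these corrections your proof is complete and matches the intended argument.
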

We also have the following relative compatibility results.
\begin{lemma}
  Let $M\in \DQCOH(X)$ and $N\in \DQCOH(S)$. If $q$ is concentrated,
  then there is a natural isomorphism
  \[
    F^{f,g}_{M,N,K} \colon \Phi_{K}(M\otimes^{\LDERF}_{\Orb_X} \LDERF
    f^*N) \to \Phi_K(M) \otimes^{\LDERF}_{\Orb_Y} \LDERF g^*N.
  \]
\end{lemma}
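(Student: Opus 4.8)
The plan is to establish the isomorphism by chaining together the projection formula for $\RDERF q_{\qcsubscript,*}$ with tor-independent base change along the cartesian square relating $f$, $g$, $p$, $q$. First I would unwind the left-hand side using the definition of $\Phi_K$ and the fact that $\LDERF p^*$ is symmetric monoidal:
\[
  \Phi_K(M \otimes^{\LDERF}_{\Orb_X} \LDERF f^*N) = \RDERF q_{\qcsubscript,*}\bigl(K \otimes^{\LDERF}_{\Orb_{X\times_S Y}} \LDERF p^*(M \otimes^{\LDERF}_{\Orb_X} \LDERF f^*N)\bigr) \simeq \RDERF q_{\qcsubscript,*}\bigl(K \otimes^{\LDERF}_{\Orb_{X\times_S Y}} \LDERF p^*M \otimes^{\LDERF}_{\Orb_{X\times_S Y}} \LDERF p^*\LDERF f^*N\bigr).
\]
Since $f\circ p = g\circ q$ (commutativity of the defining square), $\LDERF p^*\LDERF f^*N \simeq \LDERF q^*\LDERF g^*N$, so the integrand is $K\otimes^{\LDERF}_{\Orb_{X\times_S Y}} \LDERF p^*M \otimes^{\LDERF}_{\Orb_{X\times_S Y}} \LDERF q^*\LDERF g^*N$.

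Next I would apply the projection formula for the concentrated morphism $q$, in the form recorded in \cite[\S4]{perfect_complexes_stacks}: for $P \in \DQCOH(X\times_S Y)$ and $Q \in \DQCOH(Y)$ one has $\RDERF q_{\qcsubscript,*}(P \otimes^{\LDERF}_{\Orb_{X\times_S Y}} \LDERF q^*Q) \simeq \RDERF q_{\qcsubscript,*}(P) \otimes^{\LDERF}_{\Orb_Y} Q$. Taking $P = K\otimes^{\LDERF}_{\Orb_{X\times_S Y}}\LDERF p^*M$ and $Q = \LDERF g^*N$, the right-hand side becomes
\[
  \RDERF q_{\qcsubscript,*}\bigl(K \otimes^{\LDERF}_{\Orb_{X\times_S Y}} \LDERF p^*M\bigr) \otimes^{\LDERF}_{\Orb_Y} \LDERF g^*N = \Phi_K(M) \otimes^{\LDERF}_{\Orb_Y} \LDERF g^*N,
\]
which is exactly the target. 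Naturality in $M$ and $N$ is inherited from the naturality of the monoidal structure maps and of the projection formula isomorphism, and one should check the isomorphism is compatible with the various associativity and base-change constraints, but this is routine coherence bookkeeping.

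The only genuine point requiring care is the applicability of the projection formula: for algebraic stacks the derived pushforward $\RDERF q_{\qcsubscript,*}$ is defined as an adjoint rather than by the lisse-\'etale site, so one must invoke the version of the projection formula valid for concentrated morphisms (which is precisely the hypothesis imposed, and which is established in \cite[\S4]{perfect_complexes_stacks}). I expect this—verifying that the cited projection formula applies verbatim to $\LDERF g^*N$ without boundedness assumptions on $N$—to be the main (and essentially only) obstacle; everything else is formal manipulation of symmetric monoidal functors.
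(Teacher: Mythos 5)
The paper states this lemma without proof, and your argument is the standard (and surely intended) one: expand $\Phi_K$, distribute $\LDERF p^*$ over the tensor product, rewrite $\LDERF p^*\LDERF f^* N \simeq \LDERF q^*\LDERF g^*N$ via $f\circ p = g\circ q$, and apply the projection formula for the concentrated morphism $q$ from \cite[Cor.~4.12]{perfect_complexes_stacks}. One small wrinkle: your opening sentence announces a plan that chains the projection formula with ``tor-independent base change,'' but your actual argument (correctly) needs only commutativity of the defining square, not the cartesian/tor-independence property---so the executed proof is cleaner than the plan suggests, and no base-change hypothesis beyond $q$ being concentrated is required.
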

Let $t \colon T \to S$ be a morphism of algebraic stacks. Fix
pullbacks $f_T \colon X_T=T\times_S X \to T$, $t_X\colon X_T \to X$,
$g_T \colon Y_T=T\times_S Y \to T$, $t_Y\colon Y_T \to Y$,
$p_T \colon X_T\times_T Y_T \to X_T$,
$q_T \colon X_T\times_T Y_T \to Y_T$, and
$r_T \colon X_T\times_T Y_T \to X\times_S Y$. Set
$K_T= \LDERF r_T^*K\in \DQCOH(X_T\times_T Y_T)$. We have the following simple lemma.
\begin{lemma}\label{L:fm-bc}
  Let $M\in \DQCOH(X)$ and $N\in \DQCOH(S)$.
  \begin{enumerate}
  \item There is a natural isomorphism
    \[
      B^{f,g,t}_{M,K}\colon \LDERF t_Y^*\Phi_K(M) \to
      \Phi_{K_T}(\LDERF t_X^*M).
    \]
  \item The following diagram commutes:
    \[
      \tiny\xymatrix{\LDERF t_Y^* \Phi_K(M \otimes^{\LDERF}_{\Orb_X} \LDERF f^*N) \ar[r]^{\LDERF t_Y^*F^{f,g}_{M,N,K}} \ar[d]_{B^{f,g,t}_{M \otimes^{\LDERF}_{\Orb_X} \LDERF f^*N,K}} & \LDERF t_Y^*(\Phi_K(M) \otimes^{\LDERF}_{\Orb_Y} \LDERF g^*N) \ar[r] & \LDERF t_Y^*\Phi_K(M) \otimes^{\LDERF}_{\Orb_{Y_T}} \LDERF t_Y^*\LDERF g^*N \ar[d]\\ \Phi_{K_T}(\LDERF t_X^*(M \otimes^{\LDERF}_{\Orb_X} \LDERF f^*N)) \ar[d] &  & \LDERF t_Y^*\Phi_K(M) \otimes^{\LDERF}_{\Orb_{Y_T}} \LDERF g_T^*\LDERF t^*N \ar[d]^{B^{f,g,t}_{M,K} \otimes \mathrm{id}} \\
        \Phi_{K_T}(\LDERF t_X^*M \otimes^{\LDERF}_{\Orb_{X_T}} \LDERF
        t_X^*\LDERF f^*N)) \ar[r] & \Phi_{K_T}(\LDERF t_X^*M
        \otimes^{\LDERF}_{\Orb_{X_T}} \LDERF f_T^*\LDERF t^*N)
        \ar[r]_-{F^{f_T,g_T}_{\LDERF t_X^*M,\LDERF t^*N,K_T}} &
        \Phi_{K_T}(\LDERF t_X^*M) \tensor^{\LDERF}_{\Orb_{Y_T}} \LDERF
        g_T^*\LDERF t^*N}
    \]
  \item If $t$ is concentrated and tor-independent of $p$ and
    $F \in \DQCOH(X_T)$, then the adjoint to the composition
    \[
      \LDERF t_X^*\Phi_K(\RDERF (t_X)_{\qcsubscript,*}F)
      \xrightarrow{B^{f,g,t}_{\RDERF (t_X)_{\qcsubscript,*}F,K}}
      \Phi_{K_T}(\LDERF t_X^*\RDERF (t_X)_{\qcsubscript,*}F)
      \xrightarrow{ \Phi_{K_T}(\epsilon_F)} \Phi_{K_T}(F)
    \]
    induces a natural isomorphism
    \[
      A^{f,g,t}_{F,K} \colon \Phi_{K}(\RDERF (t_X)_{\qcsubscript,*}F)
      \to \RDERF (t_X)_{\qcsubscript,*}\Phi_{K_T}(F).
  \]
  \end{enumerate}
\end{lemma}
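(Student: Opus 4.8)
The plan is to deduce all three assertions from the standard compatibilities of $\RDERF(-)_{\qcsubscript,*}$, $\LDERF(-)^*$ and $\otimes^{\LDERF}$ for concentrated morphisms of algebraic stacks---tor-independent base change and the projection formula \cite[\S4]{perfect_complexes_stacks}, functoriality of $\RDERF(-)_{\qcsubscript,*}$ under composition, and the defining properties of the isomorphisms $F^{f,g}_{M,N,K}$ and of the adjunction counit---assembled along the canonical identification $X_T\times_T Y_T\simeq (X\times_S Y)\times_S T$. That identification produces two cartesian squares
\[
  \xymatrix{X_T\times_T Y_T \ar[r]^{r_T}\ar[d]_{q_T} & X\times_S Y \ar[d]^{q}\\ Y_T \ar[r]_{t_Y} & Y}
  \qquad\qquad
  \xymatrix{X_T\times_T Y_T \ar[r]^{r_T}\ar[d]_{p_T} & X\times_S Y \ar[d]^{p}\\ X_T \ar[r]_{t_X} & X}
\]
with $q\circ r_T=t_Y\circ q_T$, $p\circ r_T=t_X\circ p_T$ and $f\circ p=g\circ q$; together with the monoidality of $\LDERF r_T^*$, these are what all three parts are built from.

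For the first part, by definition $\LDERF t_Y^*\Phi_K(M)=\LDERF t_Y^*\RDERF q_{\qcsubscript,*}\bigl(K\otimes^{\LDERF}_{\Orb_{X\times_S Y}}\LDERF p^*M\bigr)$, while monoidality of $\LDERF r_T^*$ and the identity $p\circ r_T=t_X\circ p_T$ give a canonical identification $\LDERF r_T^*\bigl(K\otimes^{\LDERF}_{\Orb_{X\times_S Y}}\LDERF p^*M\bigr)\simeq K_T\otimes^{\LDERF}_{\Orb_{X_T\times_T Y_T}}\LDERF p_T^*\LDERF t_X^*M$, so that $\Phi_{K_T}(\LDERF t_X^*M)\simeq\RDERF (q_T)_{\qcsubscript,*}\LDERF r_T^*\bigl(K\otimes^{\LDERF}_{\Orb_{X\times_S Y}}\LDERF p^*M\bigr)$. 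I would then apply tor-independent base change \cite[Cor.~4.13]{perfect_complexes_stacks} along the first cartesian square to identify these, taking the resulting composite as the definition of $B^{f,g,t}_{M,K}$.

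For the third part, note first that $t$ concentrated forces $r_T$, $t_X$ and $t_Y$ to be concentrated (being base changes of $t$), and that the hypothesis that $t$ is tor-independent of $p$ is precisely the statement that the second cartesian square is tor-independent. Base change along that square gives $\LDERF p^*\RDERF (t_X)_{\qcsubscript,*}F\simeq\RDERF (r_T)_{\qcsubscript,*}\LDERF p_T^*F$; tensoring with $K$ and invoking the projection formula for the concentrated morphism $r_T$ yields $K\otimes^{\LDERF}_{\Orb_{X\times_S Y}}\LDERF p^*\RDERF (t_X)_{\qcsubscript,*}F\simeq\RDERF (r_T)_{\qcsubscript,*}\bigl(K_T\otimes^{\LDERF}_{\Orb_{X_T\times_T Y_T}}\LDERF p_T^*F\bigr)$; and applying $\RDERF q_{\qcsubscript,*}$, using the definitions of $\Phi_K$ and $\Phi_{K_T}$ together with $\RDERF q_{\qcsubscript,*}\RDERF (r_T)_{\qcsubscript,*}=\RDERF (q\circ r_T)_{\qcsubscript,*}=\RDERF (t_Y\circ q_T)_{\qcsubscript,*}=\RDERF (t_Y)_{\qcsubscript,*}\RDERF (q_T)_{\qcsubscript,*}$, assembles a natural isomorphism $\chi\colon\Phi_K(\RDERF (t_X)_{\qcsubscript,*}F)\simeq\RDERF (t_Y)_{\qcsubscript,*}\Phi_{K_T}(F)$. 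It then remains to verify that $\chi$ is the adjoint of the displayed composite; equivalently, that $\LDERF t_Y^*\chi$ followed by the counit $\LDERF t_Y^*\RDERF (t_Y)_{\qcsubscript,*}\Rightarrow\mathrm{id}$ equals $\Phi_{K_T}(\epsilon_F)\circ B^{f,g,t}_{\RDERF (t_X)_{\qcsubscript,*}F,K}$. This is a ``mate'' computation that unwinds the base-change and projection-formula isomorphisms against the adjunction units, and it simultaneously exhibits $A^{f,g,t}_{F,K}=\chi$ as an isomorphism.

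The remaining part---commutativity of the hexagon---is a diagram chase. Each of its arrows is built from associativity of $\otimes^{\LDERF}$, monoidality of the pullbacks, the identities $p\circ r_T=t_X\circ p_T$ and $f\circ p=g\circ q$, and instances of base change and of the projection formula, so the claim reduces to the standard compatibility of base change with the projection formula for $\RDERF q_{\qcsubscript,*}$. I would check that compatibility by pulling back along smooth covers to reduce to the case in which $S$, $X$, $Y$ and $T$ are affine schemes---base change and the projection formula being compatible with such covers---where it becomes an explicit manipulation of complexes. I expect the main obstacle throughout, both for the hexagon and for the ``mate'' identification above, to be purely the coherence bookkeeping: ensuring that the various instances of base change, of the projection formula, and of the monoidal structure on $\LDERF r_T^*$ are all invoked and composed compatibly. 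There should be no conceptual difficulty---which is presumably why the authors call it a simple lemma.
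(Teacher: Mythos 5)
The paper states this as a ``simple lemma'' and provides \emph{no proof}, so there is no paper argument to compare against; your proposal supplies the missing details in what is surely the intended way, and it is essentially correct.

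A few remarks worth recording. First, as you note implicitly, the published statement of part~(3) contains a typo: the target should be $\RDERF (t_Y)_{\qcsubscript,*}\Phi_{K_T}(F)$, not $\RDERF (t_X)_{\qcsubscript,*}\Phi_{K_T}(F)$ (the source lives in $\DQCOH(Y)$, and $\Phi_{K_T}(F) \in \DQCOH(Y_T)$); your construction of $\chi$ correctly lands in $\RDERF (t_Y)_{\qcsubscript,*}\Phi_{K_T}(F)$. Second, for part~(1) you invoke tor-independent base change along the square $(q,t_Y)$, but the lemma as stated carries no explicit hypothesis that makes this square tor-independent. In the paper's applications $f$ and $g$ are always flat (so $q$, being a base change of $f$, is flat and the square is automatically tor-independent), and the surrounding paragraphs assume $f$, $g$, $h$ concentrated, so the hypotheses are implicit in context; it would be cleaner to state them. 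Third, your observation that $t$ concentrated implies $r_T$, $t_X$, $t_Y$ concentrated (since concentratedness is stable under base change) is precisely what legitimizes the projection formula for $r_T$ and the adjoint $\RDERF (t_Y)_{\qcsubscript,*}$ in part~(3). The remaining work---the mate computation in part~(3) identifying $\chi$ with the adjoint of the displayed composite, and the hexagon in part~(2)---is, as you say, pure coherence bookkeeping among base change, the projection formula, and the monoidality of $\LDERF r_T^*$; your sketch of how to organize it (reduce to affines along smooth covers and unwind) is the standard route and is fine.
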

We have the following variant of \cite[Thm.~2.4]{MR2323539} for stacks.
\begin{proposition}\label{P:ff-everywhere}
  Let $* \in \{b,-\}$. Assume that:
  \begin{itemize}
  \item $S$ is noetherian and concentrated;
  \item $f$, $g$ are proper, tame, and flat; and 
  \item $K$ is perfect.
  \end{itemize}
  If
  $\Phi_{K_{\bar{s}}} \colon \DCAT^*_{\COH}(X_{\bar{s}}) \to
  \DCAT^*_{\COH}(Y_{\bar{s}})$ is fully faithful for all geometric
  closed points $\bar{s}$ of $S$, then
  $\Phi_{K} \colon \DCAT^*_{\COH}(X) \to \DCAT^*_{\COH}(Y)$ is fully
  faithful.
\end{proposition}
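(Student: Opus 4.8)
The plan is to pass to the right adjoint of $\Phi_K$ and verify fibrewise that the unit of the resulting adjunction is an isomorphism. By Proposition~\ref{P:adjoints-FM}\itemref{PI:adjoints-FM:RIGHT}---applicable because $Y$ is noetherian and concentrated (being proper and tame over $S$) and $q$ is proper, tame, and of finite tor-dimension (being a base change of the flat morphism $f$)---the functor $\Phi_K$ admits a right adjoint $H\simeq\Phi_{K^\vee\tensor^{\LDERF}_{\Orb_{X\times_S Y}}q^\times\Orb_Y}\colon\DQCOH(Y)\to\DQCOH(X)$. Its kernel lies in $\DCAT^b_{\COH}(X\times_S Y)$, since $K^\vee$ is perfect and $q^\times\Orb_Y\in\DCAT^b_{\COH}(X\times_S Y)$ by Theorem~\ref{T:coherent-duality}\itemref{TI:coherent-duality:coherent}; hence Remarks~\ref{R:bounded-above} and~\ref{R:bounded} show that $\Phi_K$ and $H$ each carry $\DCAT^-_{\COH}$ into $\DCAT^-_{\COH}$ and that $\Phi_K$ carries $\DCAT^b_{\COH}$ into $\DCAT^b_{\COH}$. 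Because $\DCAT^*_{\COH}$ is a full subcategory of $\DQCOH$ and, for $M,M'\in\DCAT^*_{\COH}(X)$, adjunction identifies $\Hom(M,M')\to\Hom(\Phi_K M,\Phi_K M')$ with post-composition by the unit $\eta_{M'}\colon M'\to H\Phi_K(M')$, it suffices to prove that $\eta_M\colon M\to H\Phi_K(M)$ is an isomorphism in $\DQCOH(X)$ for every $M\in\DCAT^*_{\COH}(X)$.

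So fix such an $M$ and set $N=\mathrm{cone}(\eta_M)$, which lies in $\DCAT^-_{\COH}(X)$ by the preceding remarks; I would show $N=0$ fibrewise. Let $\bar s\colon\spec\bar k\to S$ be a geometric closed point, with flat projections $\bar s_X\colon X_{\bar s}\to X$ and $\bar s_Y\colon Y_{\bar s}\to Y$ (flat since $f$ and $g$ are), so that $\LDERF\bar s_X^*=\bar s_X^*$ is exact and $\bar s_X,\bar s_Y$ are tor-independent of the relevant morphisms. Combining Lemma~\ref{L:fm-bc} (for both $\Phi_K$ and $H$), Theorem~\ref{T:coherent-duality}\itemref{TI:coherent-duality:bc} (to identify $\LDERF r_{\bar s}^*\bigl(K^\vee\tensor q^\times\Orb_Y\bigr)\simeq K_{\bar s}^\vee\tensor q_{\bar s}^\times\Orb_{Y_{\bar s}}$), and Proposition~\ref{P:adjoints-FM}\itemref{PI:adjoints-FM:RIGHT} on the fibre (so that the restricted functor is the right adjoint $H_{\bar s}$ of $\Phi_{K_{\bar s}}$), one obtains a natural isomorphism $\LDERF\bar s_X^*H\Phi_K(M)\simeq H_{\bar s}\Phi_{K_{\bar s}}(\bar s_X^*M)$ compatible with the units; hence $\LDERF\bar s_X^*N\simeq\mathrm{cone}\bigl(\eta^{\bar s}_{\bar s_X^*M}\bigr)$, the cone of the unit of $(\Phi_{K_{\bar s}},H_{\bar s})$ at $\bar s_X^*M\in\DCAT^*_{\COH}(X_{\bar s})$. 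Since $\DQCOH(X_{\bar s})^c\subseteq\PERF(X_{\bar s})\subseteq\DCAT^b_{\COH}(X_{\bar s})\subseteq\DCAT^*_{\COH}(X_{\bar s})$, full faithfulness of $\Phi_{K_{\bar s}}$ on $\DCAT^*_{\COH}(X_{\bar s})$ makes $\Hom_{X_{\bar s}}(G,-[i])$ applied to $\eta^{\bar s}_{\bar s_X^*M}$ an isomorphism for every compact $G$ and every $i$, so $\LDERF\bar s_X^*N$ is right-orthogonal to $\DQCOH(X_{\bar s})^c$. As $X_{\bar s}$ is proper over the field $\bar k$, $\DQCOH(X_{\bar s})$ is compactly generated \cite[Thm.~A]{perfect_complexes_stacks}, so $\LDERF\bar s_X^*N=0$.

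It remains to deduce $N=0$. If not, let $u_0$ be the largest integer with $\COHO{u_0}(N)\neq0$ and put $\mathcal F_0=\COHO{u_0}(N)$, a nonzero coherent sheaf on $X$. Properness of $f$ makes $f(\supp\mathcal F_0)$ a nonempty closed subset of $|S|$, which therefore contains a closed point $s$; choosing a geometric point $\bar s$ of $S$ over $s$, the image of $|X_{\bar s}|\to|X|$ is the fibre $f^{-1}(s)$, which meets $\supp\mathcal F_0$, whence $\bar s_X^*\mathcal F_0\neq0$. Since $\bar s_X$ is flat, $\COHO{u_0}(\LDERF\bar s_X^*N)=\bar s_X^*\mathcal F_0\neq0$, contradicting the previous paragraph. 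Therefore $N=0$, the unit $\eta_M$ is an isomorphism, and $\Phi_K$ is fully faithful on $\DCAT^*_{\COH}(X)$.

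The genuine work is in the second paragraph: checking that the base-change isomorphism $\LDERF\bar s_X^*H\Phi_K(M)\simeq H_{\bar s}\Phi_{K_{\bar s}}(\bar s_X^*M)$ carries $\LDERF\bar s_X^*\eta_M$ to the fibrewise unit $\eta^{\bar s}_{\bar s_X^*M}$---that is, that the Fourier--Mukai adjunction $(\Phi_K,H)$ is compatible with base change along geometric closed points. This should be assembled from the functorialities in Lemma~\ref{L:fm-bc} together with the base-change compatibility of $q^\times$ in Theorem~\ref{T:coherent-duality}\itemref{TI:coherent-duality:bc}; once granted, the remaining steps are the cited duality and finiteness results, compact generation of derived categories of proper stacks, and a standard Nakayama argument on $|S|$.
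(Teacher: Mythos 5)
Your proposal follows the route the paper intends (the paper's own proof produces the right adjoint, records coherence-preservation, and then simply cites \cite[Thm.~2.4]{MR2323539} for the fibrewise bootstrap; your second and third paragraphs reconstruct exactly that bootstrap). The high-level structure---pass to the cone of the unit, show it vanishes on geometric closed fibres via compact generation, then run a support/Nakayama argument on $|S|$---is the right one.

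There is, however, a genuine error. You assert that $\bar s_X\colon X_{\bar s}\to X$ and $\bar s_Y\colon Y_{\bar s}\to Y$ are flat ``since $f$ and $g$ are.'' They are not: these are base changes of $\bar s\colon\spec\bar k\to S$ along $f$ and $g$, so they inherit flatness from $\bar s$, not from $f$ or $g$, and a geometric closed point of a noetherian stack $S$ is not flat unless $S$ has dimension $0$ near $s$. (Flatness of $f$ and $g$ only buys you that $\bar s$ is tor-independent of $f$, $g$, $p$, $q$, which is what Theorem~\ref{T:coherent-duality}\itemref{TI:coherent-duality:bc} and Lemma~\ref{L:fm-bc} actually require.) The error is harmless in your third paragraph, since right-exactness of $\bar s_X^*$ already gives $\COHO{u_0}(\LDERF\bar s_X^*N)\simeq \bar s_X^*\COHO{u_0}(N)$ in the top degree $u_0$ without any flatness. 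But it is a real gap in the second paragraph in the case $*=b$: for $M\in\DCAT^b_{\COH}(X)$ the pullback $\LDERF\bar s_X^*M$ lies in $\DCAT^-_{\COH}(X_{\bar s})$ but need \emph{not} be bounded (that would require $M$ to have finite tor-dimension over $S$, which the hypotheses do not supply), so you cannot invoke full faithfulness of $\Phi_{K_{\bar s}}$ on $\DCAT^b_{\COH}(X_{\bar s})$ with $\LDERF\bar s_X^*M$ as an argument. The gap is fixable---for instance, by first observing that full faithfulness of a coproduct-preserving, compact-preserving Fourier--Mukai transform on compacts (which lie in $\DCAT^b_{\COH}$) propagates to all of $\DQCOH(X_{\bar s})$ by the standard localizing-subcategory bootstrap, after which one is free to evaluate the unit at $\LDERF\bar s_X^*M\in\DCAT^-_{\COH}(X_{\bar s})$---but as written the $*=b$ case does not go through, and the stated reason for flatness would mislead a reader into thinking it does.
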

\begin{proof}
  By Proposition \ref{P:adjoints-FM}\itemref{PI:adjoints-FM:RIGHT},
  $F=\Phi_K$ has a right adjoint
  $H=\Phi_{K^\vee \tensor^{\LDERF}_{\Orb_{X\times_S Y}} q^{\times}
    \Orb_Y}$. Both $F$ and $H$ send $\DCAT^*_{\COH}$ to
  $\DCAT^*_{\COH}$ (Theorem
  \ref{T:coherent-duality}\itemref{TI:coherent-duality:coherent} and
  Remark \ref{R:bounded-above}). Now just argue as in
  \cite[Thm.~2.4]{MR2323539}.
\end{proof}
It turns out that Proposition \ref{P:ff-everywhere} can be
strengthened using the remarkable results of \cite{MR2964634}. The
proof immediately adapts to our situation, given Proposition
\ref{P:adjoints-FM} and Theorem \ref{T:coherent-duality}.
\begin{theorem}\label{T:anno-log}
  Assume that:
  \begin{itemize}
  \item $S$ and $X$ are noetherian and concentrated; 
  \item $g$ is proper, tame, and flat with geometrically Gorenstein fibers; and 
  \item $K$ is perfect.
  \end{itemize}
  Let
  $Q=\RDERF \pi_{13,\qcsubscript,*}(\LDERF \pi_{12}^*(K^\vee
  \tensor^{\LDERF}_{\Orb_{X\times_S Y}} p^\times\Orb_X)
  \tensor^{\LDERF}_{\Orb_{X\times_S Y\times_S Z}} \LDERF
  \pi_{12}^*K)$. Then
  \[
    \Phi_{Q}\simeq \Phi_{K^\vee \tensor^{\LDERF}_{\Orb_{X\times_S Y}} p^\times\Orb_X}
    \circ \Phi_K \Rightarrow \Phi_{(\Delta_{X/S})_{\qcsubscript,*}\Orb_X}
  \]
  is induced by a $\Orb_{X\times_S X}$-morphism
  $Q \to (\Delta_{X/S})_{\qcsubscript,*}\Orb_X$.
\end{theorem}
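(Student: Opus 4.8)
The plan is to adapt Anno--Logvinenko \cite{MR2964634}, which establishes that the units and counits of the adjunctions between Fourier--Mukai functors are themselves induced by morphisms of kernels; the only substantial new inputs needed are the existence of the left adjoint as a Fourier--Mukai functor (Proposition \ref{P:adjoints-FM}\itemref{PI:adjoints-FM:LEFT}) and coherent duality (Theorem \ref{T:coherent-duality}, Corollary \ref{C:gorenstein-duality}). Write $F=\Phi_{K,X\to Y}$ and $K_L=K^\vee\tensor^{\LDERF}_{\Orb_{X\times_S Y}}p^\times\Orb_X$, viewed as a kernel $Y\to X$ on $Y\times_S X$; by Proposition \ref{P:adjoints-FM}\itemref{PI:adjoints-FM:LEFT}, $G=\Phi_{K_L,Y\to X}$ is left adjoint to $F$, with counit $\epsilon\colon G\circ F\Rightarrow\mathrm{id}_{\DQCOH(X)}$. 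The composition formula of \S\ref{S:FM} identifies $G\circ F$ with $\Phi_Q$ and $\mathrm{id}_{\DQCOH(X)}$ with $\Phi_{(\Delta_{X/S})_{\qcsubscript,*}\Orb_X}$ (using that $g$ is flat, $K$ is perfect, and all structure morphisms are concentrated, so that the tor-independent base change and projection formulas of \cite[\S4]{perfect_complexes_stacks} apply). Under these identifications, the goal is to produce $c\colon Q\to(\Delta_{X/S})_{\qcsubscript,*}\Orb_X$ in $\DQCOH(X\times_S X)$ with $\Phi_c=\epsilon$.

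I would realize this by building the adjunction $K_L\dashv K$ at the level of kernels, following \loccit: construct a counit $c\colon Q=K\star K_L\to(\Delta_{X/S})_{\qcsubscript,*}\Orb_X$ and a unit $u\colon(\Delta_{Y/S})_{\qcsubscript,*}\Orb_Y\to K_L\star K$ (where $K\star L=\RDERF\pi_{13,\qcsubscript,*}(\LDERF\pi_{12}^*K\tensor^{\LDERF}\LDERF\pi_{23}^*L)$ is the convolution, so that $\Phi_{K\star L}\simeq\Phi_L\circ\Phi_K$), and verify the two triangle identities as morphisms in $\DQCOH(X\times_S X)$ and $\DQCOH(Y\times_S Y)$. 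This is organized most cleanly by assembling the Fourier--Mukai kernels into a bicategory with $\star$ as horizontal composition and the sheaves $(\Delta_{(-)/S})_{\qcsubscript,*}\Orb$ of the diagonals as identities, and noting that $K\mapsto\Phi_K$ is a pseudofunctor (by the composition formula and Lemma \ref{L:fm-bc}) and hence preserves adjunctions; applying $\Phi$ to $(c,u)$ then gives an adjunction $\Phi_{K_L}\dashv\Phi_K$ with counit $\Phi_c$, which by uniqueness of adjoints (after normalizing $\Phi_{K_L}$ to agree with the left adjoint of Proposition \ref{P:adjoints-FM}\itemref{PI:adjoints-FM:LEFT}) must be $\epsilon$.

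The construction of $c$ and $u$ is where coherent duality enters. Since $p\colon X\times_S Y\to X$ is the base change of $g$, it is proper, tame, and flat with geometrically Gorenstein fibers, so by Corollary \ref{C:gorenstein-duality} the object $p^\times\Orb_X$ is a shifted line bundle and $p^\times$ preserves perfection, while $\RDERF p_{\qcsubscript,*}$ preserves perfection by Corollary \ref{C:perfect-tor-dim}; hence the evaluation pairing $K\tensor^{\LDERF}_{\Orb_{X\times_S Y}}K^\vee\to\Orb_{X\times_S Y}$ and the Grothendieck--Serre trace $\RDERF p_{\qcsubscript,*}p^\times\Orb_X\to\Orb_X$ (the counit of $\RDERF p_{\qcsubscript,*}\dashv p^\times$ at $\Orb_X$; see Theorem \ref{T:coherent-duality}) are available and well behaved. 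To build $c$, I would restrict $\LDERF\pi_{12}^*K\tensor^{\LDERF}\LDERF\pi_{23}^*K_L$ to $X\times_S Y\times_S X$, contract the two copies of $K$ over the common middle factor $Y$ via the evaluation pairing, absorb the $p^\times\Orb_X$-twist, and push down along $\RDERF\pi_{13,\qcsubscript,*}$ while applying the trace for the proper Gorenstein map collapsing that factor; $u$ is built dually on $X\times_S Y$. The two triangle identities then unwind to the same diagram chases with the base-change, projection-formula, and duality isomorphisms as in \cite{MR2964634}.

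The hard part will not be any single new idea but the bookkeeping: one has to check that every base-change, projection-formula, and duality comparison used in \cite{MR2964634} remains legitimate here, i.e.\ that the relevant squares are tor-independent and the relevant morphisms concentrated (so that the composition formula holds in this generality, so that $\RDERF(-)_{\qcsubscript,*}$ commutes with the pullbacks in the convolution, and so that it interacts correctly with $p^\times$ via Theorem \ref{T:coherent-duality}\itemref{TI:coherent-duality:bc} and \itemref{TI:coherent-duality:formula}), and, because the lisse-\'etale subtleties force $\RDERF(-)_{\qcsubscript,*}$ to be defined only as an adjoint, that the argument is phrased so as never to depend on a chosen complex representing a convolution. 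Granting this, the triangle-identity verifications are formal and the statement follows.
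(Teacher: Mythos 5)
Your proposal is correct and takes the same route the paper does: the paper gives no written proof of Theorem~\ref{T:anno-log}, simply remarking that the argument of \cite{MR2964634} ``immediately adapts'' once one has Proposition~\ref{P:adjoints-FM} and Theorem~\ref{T:coherent-duality}, and your plan is exactly that adaptation --- building the kernel-level adjunction $(c,u)$ via the evaluation pairing and the trace of coherent duality, and transporting it through the pseudofunctor $K\mapsto\Phi_K$ --- correctly pinpointing the same two inputs (together with Corollary~\ref{C:gorenstein-duality} and Remarks~\ref{R:bounded-above}/\ref{R:bounded}) as the only new ingredients. The extra bookkeeping you flag (tor-independence of the relevant squares, concentratedness of the relevant maps, and independence of a chosen complex representing a convolution in $\DQCOH$) is precisely what the paper silently delegates to the reader, so your proposal is, if anything, more explicit than the paper's own proof.
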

\begin{corollary}\label{C:ff-open}
  Let $* \in \{b,-\}$. Assume that:
  \begin{itemize}
  \item $S$ is noetherian and concentrated;
  \item $f$, $g$ are proper, tame, and flat with geometrically Gorenstein fibers; and 
  \item $K$ is perfect. If $*=b$ also assume that $K$ is $X$ and
    $Y$-perfect.
  \end{itemize}
  If
  $\Phi_{K_{\bar{s}}} \colon \DCAT^*_{\COH}(X_{{s}}) \to
  \DCAT^*_{\COH}(Y_{{s}})$ is fully faithful for some 
  point $s$ of $S$, then there is an open $U \subseteq S$
  containing $s$ such that
  $\Phi_{K_U} \colon \DCAT^*_{\COH}(X_U) \to \DCAT^*_{\COH}(Y_U)$ is
  fully faithful.
\end{corollary}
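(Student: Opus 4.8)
The plan is to reduce full faithfulness, over an arbitrary base, to the vanishing of a single bounded complex on $X\times_S X$, and then to observe that this vanishing is an open condition. Set $F=\Phi_K$. Since $f$ and $g$ are flat, the projection $p\colon X\times_S Y\to X$ is the base change of $g$, hence proper, tame and flat with geometrically Gorenstein fibres; by Corollary~\ref{C:gorenstein-duality} the complex $K^\vee\tensor^{\LDERF}_{\Orb_{X\times_S Y}}p^\times\Orb_X$ is perfect, and by Proposition~\ref{P:adjoints-FM}\itemref{PI:adjoints-FM:LEFT} the Fourier--Mukai functor $G=\Phi_{K^\vee\tensor^{\LDERF}_{\Orb_{X\times_S Y}}p^\times\Orb_X}$ is left adjoint to $F$. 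By Theorem~\ref{T:anno-log} the counit $\varepsilon\colon GF=\Phi_Q\Rightarrow\mathrm{id}=\Phi_{(\Delta_{X/S})_*\Orb_X}$ is induced by a morphism of kernels $\phi\colon Q\to(\Delta_{X/S})_*\Orb_X$ in $\DQCOH(X\times_S X)$. Put $C=\mathrm{cone}(\phi)$; since $\Phi_{(-)}$ is exact in the kernel variable, there results a triangle of exact functors $GF\xrightarrow{\varepsilon}\mathrm{id}\to\Phi_C\to GF[1]$. The object $Q$ is the pushforward of a perfect complex along the projection $X\times_S Y\times_S X\to X\times_S X$, which is the base change of the proper and tame $g$; hence $Q\in\DCAT^b_{\COH}(X\times_S X)$ by the finiteness theorem of \cite{MR2183251}, and therefore $C\in\DCAT^b_{\COH}(X\times_S X)$ as well.

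The next step is to prove that \emph{for every morphism $t\colon T\to S$, the functor $\Phi_{K_T}$ is fully faithful on $\DCAT^*_{\COH}(X_T)$ if and only if $\LDERF r_{X,T}^*C\simeq0$}, where $r_{X,T}\colon X_T\times_T X_T\to X\times_S X$ is the projection. Flatness of $f$ and $g$ makes all of the squares entering the construction of $Q$ and $\phi$ tor-independent of $t$, so Lemma~\ref{L:fm-bc}, the base change isomorphism for $p^\times$ from Theorem~\ref{T:coherent-duality}\itemref{TI:coherent-duality:bc}, and the compatibility of $(\Delta_{X/S})_*\Orb_X$ with base change (again a consequence of flatness of $f$) together identify $\LDERF r_{X,T}^*C$ with the cone of the morphism built in the same manner from $(f_T,g_T,K_T)$---data that satisfy the standing hypotheses. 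It therefore suffices to treat $T=S$. Now $F$ and $G$, hence $GF$, commute with direct sums, because $p$ and $q$ are concentrated (being base changes of the proper and tame $g$ and $f$) and their kernels are perfect; so the full subcategory of $\DQCOH(X)$ on which $\varepsilon$ is invertible is localizing. Since $\DQCOH(X)$ is compactly generated by $\PERF(X)\subseteq\DCAT^b_{\COH}(X)\subseteq\DCAT^*_{\COH}(X)$ \cite{perfect_complexes_stacks}, invertibility of $\varepsilon$ on $\DCAT^*_{\COH}(X)$ is equivalent to invertibility on all of $\DQCOH(X)$, that is, to $\Phi_C\equiv0$ on $\DQCOH(X)$. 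Finally, applying $\Phi_C$ in the second variable to $(\Delta_{X/S})_*\Orb_X$ gives back $C$ (projection formula and tor-independent base change along the diagonals); hence $\Phi_C\equiv0$ forces $C\simeq0$, while the converse is immediate.

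For the openness statement, suppose $\Phi_{K_s}$ is fully faithful at a point $s$ of $S$. Applying the equivalence just established over $\spec\kappa(s)\to S$ yields $\LDERF\iota_s^*C\simeq0$, where $\iota_s\colon X_{\kappa(s)}\times_{\kappa(s)}X_{\kappa(s)}\to X\times_S X$ is the inclusion of the fibre over $s$ of the structure morphism $c\colon X\times_S X\to S$. Since $C$ is a bounded complex of coherent sheaves, a standard argument with Nakayama's lemma, applied inductively to the cohomology sheaves of $C$, shows that $\supp(C)$ is disjoint from $c^{-1}(s)$. The morphism $c$ is proper, so $Z:=c(\supp(C))$ is closed in $S$ and does not contain $s$; set $U=S\setminus Z$. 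Then $C$ restricts to zero over the open substack $c^{-1}(U)=X_U\times_U X_U$, i.e.\ $\LDERF r_{X,U}^*C\simeq0$, and the equivalence of the previous paragraph, now over $U$, shows that $\Phi_{K_U}$ is fully faithful.

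I expect the main obstacle to be the bookkeeping in the second step: one must verify that the morphism $\phi$ of Theorem~\ref{T:anno-log}---and not merely its source $Q$---is compatible with arbitrary base change, which amounts to tracing the construction of the counit through the compatibility diagrams of Lemma~\ref{L:fm-bc}, the base change isomorphism for $p^\times$, and the standard compatibilities of derived pushforward along finite morphisms with base change. Once this is in hand, reducing full faithfulness on $\DCAT^*_{\COH}$ to the vanishing of $C$, and passing from a fibrewise vanishing to a vanishing over an open neighbourhood, are routine. (The hypothesis that $K$ is $X$- and $Y$-perfect when $*=b$ is needed only to ensure that $\Phi_K$ and its adjoints genuinely preserve $\DCAT^b_{\COH}$, cf.\ Remark~\ref{R:bounded}; it plays no role in the vanishing argument.)
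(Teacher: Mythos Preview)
Your proof is correct and follows essentially the same route as the paper: construct the left adjoint via Proposition~\ref{P:adjoints-FM}, realize the counit as induced by a morphism of kernels via Theorem~\ref{T:anno-log}, and then use derived Nakayama together with properness of $X\times_S X\to S$ to deduce openness of the locus where the cone vanishes. You spell out more explicitly than the paper the equivalence ``$\Phi_{K_T}$ fully faithful $\Leftrightarrow C_T\simeq 0$'' and the support--plus--properness argument behind the phrase ``derived Nakayama''; your one slightly oblique step is the justification that $\Phi_C\equiv 0\Rightarrow C\simeq 0$, where a more direct argument (implicit in the paper's proof of Theorem~\ref{T:bo-field}) is to evaluate $\Phi_C$ on skyscrapers $\Xi(x)$ and note $\Phi_C(\Xi(x))\simeq \LDERF(x,\mathrm{id})^*C$.
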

\begin{proof}
  Proposition \ref{P:adjoints-FM}\itemref{PI:adjoints-FM:LEFT} and
  Remarks \ref{R:bounded-above} and \ref{R:bounded} imply that
  $F=\Phi_K$ admits a left adjoint $G$, which is Fourier--Mukai and
  sends $\DCAT^*_{\COH}$ to $\DCAT^*_{\COH}$. By Lemma \ref{L:fm-bc},
  these functors and their adjoints are compatible with base
  change. By \cite[Prop.~1.18]{MR2323539}, $\Phi_{K_T}$ for some
  $t\colon T\to S$ is fully faithful whenever
  $G_T \circ
  \Phi_{K_T} \Rightarrow \mathrm{id}_{\DQCOH(X_T)}$ is an equivalence. By
  Theorem \ref{T:anno-log}, this natural transformation comes from a
  map $Q_T \to (\Delta_{X_T/T})_*\Orb_{X_T}$, where
  $Q \in \DCAT^*_{\COH}(X\times_S X)$. By derived Nakayama, the locus
  where this map is an isomorphism is an open neighbourhood of
  ${s}$ in $S$.
\end{proof}
\section{The Kodaira--Spencer map and Fourier--Mukai transforms}
Let $f \colon X \to S$ be a proper morphism of algebraic stacks that is of finite presentation. Let $W \to S$ be a morphism of algebraic stacks and let $F$ be a finitely presented $\Orb_{W\times_S X}$-module that is flat over $W$. There is an induced morphism $W \xrightarrow{\underline{F}} \underline{\COH}_{X/S}$. In the language of \cite[\S6]{MR3589351}, for each point $w \colon \spec \ell_w \to W$, there is an exact sequence of morphisms of tangent spaces:
\[
   \Def_{\underline{F}}(w,\ell_w) \to \Def_{W/S}(w,\ell_w) \xrightarrow{\mathrm{d}\underline{F}(w)}  \Def_{\underline{\COH_{X/S}}/S}(\underline{F}(w),\ell_w),
\]
where we refer to $\mathrm{d}\underline{F}(w)$ as the
\emph{Kodaira--Spencer} map.
\begin{remark}\label{R:KS}
  Let $k$ be a field. Let $X \xrightarrow{f} Y$ be a finite type
  morphism of algebraic stacks that are locally of finite type over
  $\spec k$. Let $x \colon \spec \ell_x \to X$ be a point of $X$. If
  $f$ is representable, then
  $\Def_{f}(x,\ell_x) \simeq \Hom_{\ell_x}(x^*\Omega_{f},\ell_x)$
  \cite[Ex.~6.1]{MR3589351}. In particular, if $f$ is also radiciel
  (i.e., finite type and universally injective), $X$ is reduced, and
  $k$ is a field of characteristic $0$; then $\Def_{f}(x,\ell_x)=0$
  (and so
  $\mathrm{d}f(x) \colon \Def_{X/k}(x,\ell_x) \to
  \Def_{Y/k}(x,\ell_x)$ is injective) for $x$ in an open and dense
  subset $U$ of $X$. Indeed,
  $f$ admits a factorization as
  $X \twoheadrightarrow V \hookrightarrow Y$, where the first map is
  schematically dominant (so $V$ is reduced) and the second is a
  closed immersion. By generic smoothness, there is a dense open of
  $U$ of $X$ such that $U \to V$ is smooth. Since $f$ is radiciel, it
  follows that $U \to V$ is also \'etale---even an open
  immersion---and so the restriction of $f$ to $U$ is
  unramified. Hence, $(\Omega_{f})_U = 0$ and the claim follows. 
\end{remark}
We will be concerned with the behaviour of the Kodaira--Spencer map
under a Fourier--Mukai transform. The main result of this section is
Proposition \ref{P:ks-functorial}. We begin by recalling some results
from \cite[Tag \spref{0DIS}]{stacks-project} (also see
\cite{MR2177199}), which gives a general interpretation of the tangent
space to moduli stacks of complexes (even if the moduli stack is not
algebraic).

Let $i \colon X \hookrightarrow X'$ be a \emph{locally nilpotent}
closed immersion of algebraic stacks; that is, it is a closed
immersion defined by a quasicoherent sheaf of ideals
$I \subseteq \Orb_{X'}$ and smooth-locally on $X'$ there always exists
an $n$ with $I^n = (0)$. Let $F \in \DQCOH(X)$. Then there is the set
of isomorphism classes of lifts of $F$ over $i$:
\[
  \Lift[F,i] = \{ (F',\phi) \suchthat
  \mbox{$F' \in \DQCOH(X')$ and
    $\phi \colon \LDERF i^* F' \simeq F$}\}/\simeq.
\]
\begin{remark}
  Note that every morphism $(F'_1,\phi_1) \to (F'_2,\phi_2)$ of lifts
  of $F$ to $X'$ is an isomorphism. Indeed, it suffices to prove that
  if $M \in \DQCOH(X')$ and $\LDERF i^*M \simeq 0$, then $M \simeq
  0$. This is reasonably well-known if $X'$ is noetherian or
  $I=\ker(\Orb_{X'} \to \Orb_X)$ is finitely generated (e.g., using
  derived completions) or $M$ is bounded above. In general, this is
  smooth-local on $X'$, so we may assume that $I^n=(0)$ for some
  $n>0$; by induction on $n$ we may assume that $n=2$. Then
  $\LDERF i^*M \simeq 0$ implies via the derived projection formula
  \cite[Cor.~4.12]{perfect_complexes_stacks} that
  $M \otimes^{\LDERF}_{\Orb_{X'}} \RDERF i_{\qcsubscript,*}K \simeq \RDERF i_{\qcsubscript,*}(\LDERF
  i^*M \otimes^{\LDERF}_{\Orb_X} K) \simeq 0$ for all
  $K \in \DQCOH(X)$. Now $I \simeq \RDERF i_{\qcsubscript,*}(i^*I)$ (note that the
  pullback is underived), so tensoring the triangle
  $\RDERF i_{\qcsubscript,*}(i^*I) \to \Orb_{X'} \to \RDERF i_{\qcsubscript,*}\Orb_X \to \RDERF
  i_{\qcsubscript,*}(i^*I)[1]$ with $M$ we see that $M \simeq 0$.
\end{remark}
Now consider a pair of square zero closed immersions of algebraic
stacks:
\[
  \xymatrix{& X \ar@{_(->}[dl]_i \ar@{^(->}[dr]^j &\\
    X'  & & X''.}
\]
If $f \colon X' \to X''$ is a morphism of algebraic stacks that is
compatible with the above diagram, then there is an induced functor
\[
  f^* \colon \Lift[F,j] \to \Lift[F,i] \colon (F',\phi)
  \mapsto (\LDERF f^*F',\LDERF f^*\phi).
\]

Given a lift $(F',\phi)$ of $F$ over $i$, we may tensor the exact
sequence
\[
  \xymatrix{0 \ar[r] & I \ar[r] & \Orb_{X'} \ar[r] &i_{\qcsubscript,*}\Orb_X \ar[r] &
    0}
\]
by $F'$ to obtain the following distinguished triangle in $\DQCOH(X')$:
\[
  \xymatrix{F' \tensor^{\LDERF}_{\Orb_{X'}} I \ar[r] & F' \ar[r] &
    F'\tensor^{\LDERF}_{\Orb_{X'}} \RDERF i_{\qcsubscript,*}\Orb_X \ar[r] & F' \tensor^{\LDERF}_{\Orb_{X'}} I[1].}
\]
Now assume that $i$ is square zero; then $I \simeq \RDERF i_{\qcsubscript,*}(i^*I)$
(note that $i^*$ is underived here). Using the projection formula and
$\phi$, this triangle is naturally isomorphic to:
\[
  \xymatrix{\RDERF i_{\qcsubscript,*}(F \tensor^{\LDERF}_{\Orb_{X}} i^*I) \ar[r] & F'
    \ar[r]^-{\phi^\vee} & \RDERF i_{\qcsubscript,*}F \ar[r]^-{\delta_{(F',\phi)}} &
    \RDERF i_{\qcsubscript,*}(F \tensor^{\LDERF}_{\Orb_{X}} i^*I)[1].}
\]
For naturality properties of the projection formula, see \cite[App.~A]{GAGA_theorems}. 
Given another lift $(F_0,\phi_0)$ of $F$ over $i$, we see that $(F_0,\phi_0) \to (F',\phi)$ if and only if the composition
\[
  F_0 \xrightarrow{\phi_0^\vee} \RDERF i_{\qcsubscript,*}F \xrightarrow{\delta_{(F',\phi)}} \RDERF i_{\qcsubscript,*}(F\otimes^{\LDERF}_{\Orb_X} i^*I)[1]
\]
is the $0$ homomorphism in $\DQCOH(X')$. By $(\LDERF i^*,\RDERF i_{\qcsubscript,*})$
adjunction and the isomorphism $\phi_0 \colon \LDERF i^*F_0 \simeq F$,
we obtain a map
\[
  \tau_{[F,i]}^{(F_0,\phi_0)}(F',\phi') \colon F \to (F \tensor^{\LDERF}_{\Orb_X} i^*I)[1]
\]
in $\DQCOH(X)$. Hence, there is an injective map
\[
  \tau_{[F,i]}^{(F_0,\phi_0)} \colon \Lift[F,i] \to \Ext^1_{\Orb_X}(F,F\tensor^{\LDERF}_{\Orb_X} i^*I),
\]
which is functorial in $i$, $F_0$ and $F$ and sends $(F_0,\phi_0)$ to
$0$. In fact, this map is even surjective, but we will not need
that.

\begin{proposition}\label{P:ks-functorial}
  Assume that $t\colon T \to S$ is a square zero nilpotent closed
  immersion, which is defined by a quasicoherent ideal $I$. Let
  $F\in \DQCOH(X_T)$ and $K\in \DQCOH(X\times_{S} Y)$. If
  $(F_0,\phi_0)$ is a lift of $F$ along $t_X$, then the following
  diagram commutes:
  \[
    \tiny\xymatrix@C-3pc{\Lift[F,t_X] \ar[rr]^{(F',\phi) \mapsto (\Phi_K(F'),\Phi_K(\phi)\circ B^{f,g,t}_{F',K})} \ar[d]_{\tau^{(F_0,\phi_0)}_{[F,t_X]}} & & \Lift[\Phi_{K_T}(F),t_Y] \ar[d]_{\tau^{(\Phi_K(F_0),\Phi_K(\phi_0)\circ B^{f,g,t}_{F_0,K})}_{[\Phi_{K_T}(F),t_Y]}}\\
      \Ext^1_{\Orb_{X_T}}(F,F \otimes_{\Orb_{X_T}}^{\LDERF} t_X^*
      f^*I) \ar[d] & & \Ext^1_{\Orb_{Y_T}}(\Phi_{K_T}(F),\Phi_{K_T}(F)
      \otimes^{\LDERF}_{\Orb_{Y_T}} t_Y^* g^*I) \ar[d] \\
      \Ext^1_{\Orb_{X_T}}(F,F \otimes_{\Orb_{X_T}}^{\LDERF} \LDERF
      f_T^* t^*I) \ar[dr] &
      &
      \Ext^1_{\Orb_{Y_T}}(\Phi_{K_T}(F),\Phi_{K_T}(F)
      \otimes_{\Orb_{Y_T}}^{\LDERF}\LDERF g_T^* t^*I).\\
    & \Ext^1_{\Orb_{Y_T}}(\Phi_{K_T}(F),\Phi_{K_T}(F
      \otimes_{\Orb_{X_T}}^{\LDERF}\LDERF f_T^* t^*I)) \ar[ur] &}
  \]
  Moreover, the vertical maps are injective. 
\end{proposition}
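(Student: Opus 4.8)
The plan is a naturality argument. The injectivity of the two vertical composites is not new --- it is exactly the injectivity of the map $\tau^{(-)}_{[-,-]}$ recorded in the discussion preceding the statement, applied once with $(F,t_X)$ and once with $(\Phi_{K_T}(F),t_Y)$ --- so the real content is the commutativity. The guiding principle is that $\tau^{(F_0,\phi_0)}_{[F,t_X]}(F',\phi')$ is manufactured functorially out of the \emph{defining triangle} of the lift $(F',\phi')$, and that the Fourier--Mukai functor $\Phi_K$ carries that triangle (living on $X$) to the defining triangle of the transformed lift $(\Phi_K(F'),\Phi_{K_T}(\phi')\circ B^{f,g,t}_{F',K})$ (living on $Y$).

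First I would record the relevant triangles. Tensoring the square-zero extension $0\to I_X\to\Orb_X\to(t_X)_{*}\Orb_{X_T}\to 0$, where $I_X$ is the ideal of $t_X$, by $F'$, and using $I_X\simeq\RDERF(t_X)_{*}t_X^{*}I_X$, the projection formula \cite[App.~A]{GAGA_theorems}, and $\phi'$, one obtains a distinguished triangle in $\DQCOH(X)$
\[
  \RDERF(t_X)_{*}(F\tensor^{\LDERF}_{\Orb_{X_T}}t_X^{*}I_X)\to F'\xrightarrow{(\phi')^{\vee}}\RDERF(t_X)_{*}F\xrightarrow{\ \delta\ }\RDERF(t_X)_{*}(F\tensor^{\LDERF}_{\Orb_{X_T}}t_X^{*}I_X)[1],
\]
whose classifying map $\delta$ determines $\tau^{(F_0,\phi_0)}_{[F,t_X]}(F',\phi')$: it is the image of $\delta$ under precomposition with $\phi_0^{\vee}$, followed by the $(\LDERF t_X^{*},\RDERF(t_X)_{*})$-adjunction and the twist by $\phi_0$. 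Running the identical recipe on $Y$, starting from $0\to I_Y\to\Orb_Y\to(t_Y)_{*}\Orb_{Y_T}\to 0$ tensored by $\Phi_K(F')$ and using $\psi:=\Phi_{K_T}(\phi')\circ B^{f,g,t}_{F',K}$, yields a triangle with classifying map $\delta^Y$ that likewise determines $\tau^{(\Phi_K(F_0),\,\Phi_{K_T}(\phi_0)\circ B^{f,g,t}_{F_0,K})}_{[\Phi_{K_T}(F),t_Y]}$ of the transformed lift.

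Next I would apply the triangulated functor $\Phi_K$ to the first triangle and identify the result with the second. For the outer terms this is Lemma~\ref{L:fm-bc}(3), namely $A^{f,g,t}_{F,K}\colon\Phi_K(\RDERF(t_X)_{*}F)\simeq\RDERF(t_Y)_{*}\Phi_{K_T}(F)$; for the $t_X^{*}I_X$-term one post-composes $A^{f,g,t}$ with the base-change identification $t_X^{*}I_X\simeq\LDERF f_T^{*}t^{*}I$, the relative compatibility isomorphism $F^{f_T,g_T}$ moving $\LDERF f_T^{*}t^{*}I$ past $\Phi_{K_T}$, and $B^{f,g,t}$, arriving at $\RDERF(t_Y)_{*}(\Phi_{K_T}(F)\tensor^{\LDERF}_{\Orb_{Y_T}}t_Y^{*}I_Y)$; the middle terms are matched by $\mathrm{id}_{\Phi_K(F')}$. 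Two compatibilities must then be verified. That $\Phi_K((\phi')^{\vee})$ is carried to $\psi^{\vee}$ is purely formal: writing $(\phi')^{\vee}=\RDERF(t_X)_{*}(\phi')\circ\eta^X_{F'}$, and using naturality of $A^{f,g,t}$, of $B^{f,g,t}$ and of the unit $\eta^X$, the defining adjoint description of $A^{f,g,t}$ from Lemma~\ref{L:fm-bc}(3), and the triangle identity $\epsilon^X\circ\LDERF t_X^{*}\eta^X=\mathrm{id}$, one computes $A^{f,g,t}_{F,K}\circ\Phi_K((\phi')^{\vee})=\psi^{\vee}$; the same computation with $F_0$ in place of $F'$ gives $A^{f,g,t}_{F,K}\circ\Phi_K(\phi_0^{\vee})=\psi_0^{\vee}$, where $\psi_0:=\Phi_{K_T}(\phi_0)\circ B^{f,g,t}_{F_0,K}$. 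That the map $\RDERF(t_X)_{*}(F\tensor^{\LDERF}_{\Orb_{X_T}}t_X^{*}I_X)\to F'$ induced by $I_X\hookrightarrow\Orb_X$ is carried to its $Y$-counterpart is exactly where Lemma~\ref{L:fm-bc}(2) --- the compatibility of $A^{f,g,t}$, $B^{f,g,t}$ and $F^{f,g}$ --- enters, with coefficient module $t^{*}I$, together with naturality of the projection formula. Granting this, $\Phi_K(\delta)$ is identified with $\delta^Y$, hence so are $\Phi_K(\delta\circ\phi_0^{\vee})$ and $\delta^Y\circ\psi_0^{\vee}$; transporting through the two adjunctions --- which are intertwined by $\Phi_{K_T}$ on $\Ext^1$ and by the $F^{f_T,g_T}$ and base-change isomorphisms, again via Lemma~\ref{L:fm-bc}(2)--(3) --- produces precisely the commutativity of the displayed diagram.

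I expect the principal obstacle to be bookkeeping rather than any isolated difficulty: one must check that the entire chain of natural isomorphisms ($A^{f,g,t}$, $B^{f,g,t}$, $F^{f,g}$, the projection formula, base change) used to identify the $X$- and $Y$-triangles respects the triangle maps, and that the same isomorphisms intertwine the two adjunction transports that define $\tau$ on the two sides. The only genuinely non-formal ingredients are Lemma~\ref{L:fm-bc}(2) and the naturality of the projection formula of \cite[App.~A]{GAGA_theorems}; everything else is naturality of units and counits and the triangle identities. One subtlety worth flagging: since $\Phi_K$ is not $t$-exact, the argument must be carried out entirely with distinguished triangles, never with short exact sequences.
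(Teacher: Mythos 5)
The paper gives no proof of Proposition~\ref{P:ks-functorial}: it is stated, followed immediately by a remark, with no \verb|proof| environment (the same is true of Lemma~\ref{L:fm-bc} and the unnamed lemma introducing $F^{f,g}_{M,N,K}$). So there is no argument of the paper's to compare against; I can only assess your outline on its own terms.

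Your overall strategy --- apply the triangulated functor $\Phi_K$ to the defining triangle of a lift on $X$, identify the result with the defining triangle on $Y$ via $A^{f,g,t}$, $B^{f,g,t}$, $F^{f,g}$ and the projection formula, then transport the classifying map $\delta$ through the two $(\LDERF t^*,\RDERF t_*)$-adjunctions --- is the right one, and the ingredients you name (Lemma~\ref{L:fm-bc}(2)--(3), naturality of the projection formula, triangle identities) are exactly what one needs. Your remark that one must work with distinguished triangles rather than short exact sequences, because $\Phi_K$ is not $t$-exact, is an important point to make explicit.

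There is, however, one place where your logic does not quite close. You verify compatibility of the two morphisms $\RDERF(t_X)_*(F\tensor t_X^*I_X)\to F'$ and $F'\xrightarrow{(\phi')^\vee}\RDERF(t_X)_*F$ with their $Y$-counterparts and then write ``Granting this, $\Phi_K(\delta)$ is identified with $\delta^Y$.'' In a triangulated category the connecting map is \emph{not} determined by the other two maps in a distinguished triangle; two triangles with the same $A\xrightarrow{u}B\xrightarrow{v}C$ can have different $w\colon C\to A[1]$, and TR3 only produces \emph{some} fill-in, not the fill-in you want. To actually identify $\Phi_K(\delta)$ with $\delta^Y$ you should instead observe that both are obtained by tensoring the same distinguished triangle $I\to\Orb_S\to\Orb_T\to I[1]$ on $S$ (pulled back along $f$, resp.\ $g$) against $F'$, resp.\ $\Phi_K(F')$, and that $F^{f,g}_{F',-,K}$ is a natural isomorphism between the two triangulated functors $N\mapsto\Phi_K(F'\tensor^{\LDERF}\LDERF f^*N)$ and $N\mapsto\Phi_K(F')\tensor^{\LDERF}\LDERF g^*N$ from $\DQCOH(S)$ to $\DQCOH(Y)$. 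Evaluating naturality on the map $\Orb_T\to I[1]$ identifies the two connecting maps directly; the ``two compatibilities'' you check are then automatic rather than the source of the $\delta$-identification, and Lemma~\ref{L:fm-bc}(2) is what reconciles this projection-formula identification with the $A^{f,g,t}$-identification of the outer vertex.

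Two smaller cautions. First, the vertical arrows in the statement are not just $\tau$ but $\tau$ followed by change-of-coefficient maps such as $\Ext^1(F,F\otimes t_X^*f^*I)\to\Ext^1(F,F\otimes\LDERF f_T^*t^*I)$; asserting their injectivity by citing only the injectivity of $\tau$ is too quick unless one argues (e.g.\ in the flat case relevant to the main theorem) that these coefficient maps are isomorphisms. Second, Lemma~\ref{L:fm-bc}(3) requires $t$ tor-independent of $p$; in the Deligne--Mukai application this holds because $X,Y$ are smooth over the base, but it is worth flagging that the hypothesis is being used.

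Beyond these points, the proposal is sound and I would expect the (omitted) argument the authors had in mind to follow the same skeleton.
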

\begin{remark}
  In the situation of Proposition \ref{P:ks-functorial}: if $F$ is
  quasicoherent sheaf that is flat over $T$, then every lift
  $(F',\phi')$ of $F$ along $t_X$ is a quasicoherent sheaf on $X_S$
  that is flat over $S$. This follows from the local criterion for
  flatness. In addition, if $S$ is noetherian and $X$ and $Y$ are
  locally of finite type over $S$ and $F$ is coherent, then the same
  is true of $F'$.
\end{remark}
\section{Generalized points}\label{S:gen-points}
Let $k$ be a field. 
Let $\mathcal{G}$ be a gerbe over $k$; that is, $\mathcal{G}$ is an
algebraic stack with flat diagonal over $\spec k$ such that the fppf
sheafification of the presheaf of isomorphism classes of $\mathcal{G}$
is isomorphic to $\spec k$. Note that all such $\mathcal{G}$ are
smooth and locally noetherian over $\spec k$. If $G$ is a group
algebraic space that is of finite presentation over $\spec k$, then
$BG$ is a noetherian gerbe over $k$. Every noetherian gerbe with affine diagonal has the
resolution property. 
\begin{proposition}\label{P:g-decomp}
  Let $\mathcal{G}$ be a noetherian gerbe over a field $k$. If $\xi$
  is a simple object of the abelian category $\QCOH(\mathcal{G})$,
  then $\xi$ belongs to $\COH(\mathcal{G})$. In particular, there is a
  set $\hat{\mathcal{G}}$ of representatives of isomorphism classes of
  simple objects of $\QCOH(\mathcal{G})$. Moreover, if $M$ is a
  quasicoherent $\Orb_{\mathcal{G}}$-module, then there is a filtered
  union
  \[
    M = \bigcup_{\lambda} M_\lambda,
  \]
  where $M_\lambda \in \COH(\mathcal{G})$ and each $M_\lambda$ admits a finite filtration
  \[
    M_\lambda^0 \subseteq M_\lambda^1 \subseteq \cdots \subseteq M_\lambda^{n_\lambda} = M_{\lambda},
  \]
  where $M_{\lambda}^i/M_{\lambda}^{i-1} \in \hat{\mathcal{G}}$ for
  all $i$ and $\lambda$. Moreover, if $\mathcal{G}$ has linearly
  reductive stabilizer, then every object of
  $\QCOH(\mathcal{G})$ is projective and injective and every
  quasicoherent $\Orb_{\mathcal{G}}$-module $M$ admits a
  decomposition:
  \[
    M \simeq \bigoplus_{\xi\in \hat{\mathcal{G}}} \xi^{\oplus m(M,\xi)}.
  \]
\end{proposition}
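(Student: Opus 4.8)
The plan is to reduce everything to the standard theory of comodules over a coalgebra (equivalently, representations of an affine/flat group scheme) via a suitable presentation of $\mathcal{G}$, and then to bootstrap back to $\mathcal{G}$ itself. First I would establish that a simple object $\xi \in \QCOH(\mathcal{G})$ is coherent: since $\mathcal{G}$ is noetherian, $\xi$ is the filtered union of its coherent quasicoherent subsheaves, and simplicity forces one of these subsheaves to equal $\xi$ (the inclusion is a nonzero subobject, hence an isomorphism). This gives the set $\hat{\mathcal{G}}$ of isomorphism classes of simple objects. For the filtration statement, I would again use that every quasicoherent $M$ is the filtered union of its coherent subsheaves $M_\lambda$; it then suffices to show every coherent $M_\lambda$ admits a finite filtration with simple quotients, i.e.\ that $\COH(\mathcal{G})$ is a (locally) artinian and noetherian abelian category in which every coherent object has finite length. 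Noetherianity is immediate from $\mathcal{G}$ being a noetherian stack; for the artinian property, I would pass to a presentation.

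The key technical input is that a noetherian gerbe $\mathcal{G}/k$ admits, after a finite field extension $k'/k$, a section $\spec k' \to \mathcal{G}_{k'}$, so that $\mathcal{G}_{k'} \simeq BG$ for a flat group algebraic space $G$ of finite presentation over $k'$ (this is the standard "gerbes are neutralized by an fppf cover" statement, and one can arrange the cover to be a finite field extension since the coarse space is $\spec k$). Over $\spec k'$, $\QCOH(BG)$ is the category of $G$-representations, i.e.\ comodules over the coalgebra $\Orb(G)$ (or the Hopf algebroid if $G$ is only an algebraic space, not a scheme — here I would invoke that $\mathcal{G}$ has affine diagonal when assuming the resolution property, but in general one works with the associated groupoid presentation $\spec k' \rightrightarrows \mathcal{G}_{k'}$ directly). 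A coherent comodule over a coalgebra is a finite-dimensional $k'$-vector space with extra structure, hence automatically of finite length, which gives the artinian property for $\COH(\mathcal{G}_{k'})$, and then for $\COH(\mathcal{G})$ by faithfully flat descent along $\spec k' \to \spec k$ (finite length is detected after the faithfully flat base change since the extension is finite, so dimensions only scale). This yields the Jordan–Hölder filtrations $M_\lambda^\bullet$, with simple subquotients in $\hat{\mathcal{G}}$.

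For the final assertion, assume $\mathcal{G}$ has linearly reductive stabilizer, so $\pi \colon \mathcal{G} \to \spec k$ is cohomologically affine, i.e.\ $\pi_*$ is exact. I would first show every object of $\QCOH(\mathcal{G})$ is injective: given $\xi$ simple and any $M$, the exactness of $\pi_*$ together with the fact that $\Hom_{\Orb_{\mathcal{G}}}(-, N) \simeq \pi_*\SHom_{\Orb_{\mathcal{G}}}(-,N)$ on the coherent level, combined with semisimplicity of $\COH(\mathcal{G})$ established above (every coherent object is a sum of simples, because the finite filtration splits once $\Ext^1$ between simples vanishes, which follows from cohomological affineness — every short exact sequence of coherent sheaves has $\pi_*$-exact, hence $k$-split, and thus split, sequence of global Homs), forces $\Ext^1_{\Orb_{\mathcal{G}}}(\xi', \xi) = 0$ for all simple $\xi'$. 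Since cohomology commutes with filtered colimits on a noetherian stack, $\Ext^1_{\Orb_{\mathcal{G}}}(-, M) = 0$ for all quasicoherent $M$, so every quasicoherent sheaf is injective; projectivity follows dually (or by the same argument applied to the opposite category, using that $\SHom(\xi, -)$ with $\xi$ a finite-rank vector bundle — which $\xi$ is, being coherent on a regular stack with the resolution property — is exact composed with $\pi_*$). Given semisimplicity of $\COH(\mathcal{G})$ and that every quasicoherent sheaf is the filtered union of coherent subsheaves, a Zorn's lemma argument produces the decomposition $M \simeq \bigoplus_{\xi \in \hat{\mathcal{G}}} \xi^{\oplus m(M,\xi)}$: a maximal "isotypic-direct-sum" subobject must be all of $M$, since any coherent subsheaf not contained in it would contribute a further simple summand.

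The main obstacle I anticipate is the passage from $BG$ for $G$ an affine \emph{group scheme} to the general case where the stabilizer is only a group \emph{algebraic space} of finite presentation, and the case where $\mathcal{G}$ does not a priori have affine diagonal; here one should either invoke that noetherian gerbes with the relevant reductivity are in fact quasi-affine-diagonal (so the comodule picture applies after a finite extension), or argue directly with the noetherian groupoid presentation $\spec k' \rightrightarrows \spec k'$ and descent, checking that "simple" and "finite length" are fppf-local notions. Everything else — coherence of simples, the filtered-union reductions, and the splitting arguments under cohomological affineness — is routine once the finite-length statement for $\COH(\mathcal{G})$ is in hand.
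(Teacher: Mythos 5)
Your first step (simplicity implies coherence via the filtered-union argument in a noetherian abelian category) is exactly the paper's. For the Jordan--H\"older part, the paper simply writes that the filtration claims are ``immediate from the Jordan--H\"older decomposition,'' tacitly using that coherent sheaves on a noetherian gerbe over a field have finite length; your detour through a neutralization $\mathcal{G}_{k'}\simeq BG$ over a finite extension $k'/k$, finite-dimensionality of coherent comodules, and descent is a legitimate way to supply that fact (faithful flatness preserves strict inclusions, so length is bounded by the length after base change). Your worry about group algebraic spaces versus group schemes is moot here: over a field, a group algebraic space locally of finite type is automatically a scheme.

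The genuine divergence is in the linearly reductive case. The paper delegates to Corollary~\ref{C:loc-proj-inj-glob}: because $\mathcal{G}$ admits a flat cover by the spectrum of a field, every quasicoherent sheaf is \emph{locally} projective and injective, and the retracted-cover result (Proposition~\ref{P:retract-cover}, Appendix~\ref{A:retracted-covers}) upgrades this to global projectivity and injectivity on a cohomologically affine stack with affine diagonal and the resolution property; the decomposition then follows. You instead argue directly: coherent sheaves on $\mathcal{G}$ are vector bundles, so $\SExt^{>0}$ vanishes and $\Ext^1_{\Orb_{\mathcal{G}}}(C,C')\simeq\shfcoho^1(\mathcal{G},\SHom_{\Orb_{\mathcal{G}}}(C,C'))=0$ by exactness of $\pi_*$; hence $\COH(\mathcal{G})$ is semisimple; then pass to colimits and finish with a Zorn argument. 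This is correct in outline, but your exposition of it is circular as written---you assert inside a parenthetical that $\Ext^1$-vanishing ``follows from cohomological affineness,'' then conclude outside the parenthetical that your reasoning ``forces'' the very same $\Ext^1$-vanishing. The clean ordering is: (i) coherent sheaves are vector bundles; (ii) $\Ext^1$ between coherent sheaves vanishes by cohomological affineness; (iii) every short exact sequence of coherent sheaves splits; (iv) pass to limits. You should also say explicitly that deducing ``$M$ is injective'' from ``$\Ext^1(C,M)=0$ for all coherent $C$'' is a Baer-type criterion (this is precisely where the paper's Corollary~\ref{C:loc-proj-inj-glob} does its work), and that the Zorn step uses projectivity of simples to lift a simple subobject of $M/N$ into $M$. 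Both approaches prove the same thing; the paper's is shorter only because it amortizes the cost into the appendix.
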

\begin{proof}
  Since the algebraic stack $\mathcal{G}$ is noetherian,
  $\xi = \cup_\lambda \xi_\lambda$ \cite{MR1771927}, where the
  $\xi_\lambda$ are coherent $\Orb_{\mathcal{G}}$-submodules of
  $\xi$. But $\xi$ is simple, so $\xi_\lambda = \xi$ for some
  $\lambda$ sufficiently large. The filtration claims are immediate
  from the Jordan--H\"older decomposition. The decomposition claim
  follows from the projectivity and injectivity of objects claim. The
  projectivity and injectivity of objects follows from Corollary
  \ref{C:loc-proj-inj-glob}, as $\mathcal{G}$ admits a flat cover by
  the spectrum of a field.
\end{proof}
Let $X$ be an algebraic stack. Recall that a \emph{point} of $X$ is a
morphism $x\colon \spec k \to X$, where $k$ is a field. There is
always a factorization of $x$ as
\begin{equation}
  \spec k \xrightarrow{p_x} BG_x \xrightarrow{\bar{x}} X,\label{eq:point}
\end{equation}
where $G_x = \Aut_X(x)$, is the automorphism group algebraic space of
$x$, which is locally of finite type over $k$. If
$y\colon \spec l \to X$ is another point of $X$, then we say that
$x\sim y$ if there is a field $K$ and embeddings $k, l \subseteq K$
together with a $2$-morphism between $\spec K \to \spec k \to X$ and
$\spec K \to \spec l \to X$. The set of equivalence classes of points
forms a topological space $|X|$, with open subsets arising the images
of open immersions into $X$ \cite[\S5]{MR1771927} (also see
\cite[App.~A]{bragg2024murphys}).

A point $x\colon \spec k \to X$ is
\emph{affine} if $x$ is affine and \emph{closed} if its image defines
a closed subset of $|X|$. If $X$ is quasiseparated, there is a
further factorization of \eqref{eq:point}:
\begin{equation}
\spec k \xrightarrow{p_x} BG_x \xrightarrow{\tilde{x}} \mathcal{G}_x \xrightarrow{i_x} X,\label{eq:point-gerbe}
\end{equation}
where $i_x$ is a quasi-affine monomorphism
\cite[Thm.~B.2]{MR2774654}. Note that
$\mathcal{G}_x \xrightarrow{i_x} X$ only depends on the isomorphism
class of $x$ and we call $\mathcal{G}_x$ the \emph{residual
  gerbe}. Since $X$ is quasiseparated, the residual gerbe is always
noetherian. Moreover, $p_x$ and $\tilde{x}$ are faithfully flat. We
say that a point $x$ is \emph{strongly affine} if $i_x$ is an affine
morphism.
\begin{remark}
  Clearly, every closed point is strongly affine. More generally, if
  $X$ has affine diagonal and $x$ is a point with reductive
  automorphism group, then $x$ is strongly affine. The
  open point of $[\Aff^2/\mathrm{SL}_2]$ is \emph{not} strongly affine.
\end{remark}
The following generalizes \cite[\S2.2]{MR4280492}, where they worked
with closed points of Deligne--Mumford stacks over algebraically
closed fields.
\begin{definition}
  Let $X$ be a quasiseparated algebraic stack. A \emph{generalized
    point} of $X$ is a pair $(x,\xi)$, where $x$ is a point and $\xi$
  is a simple object of the abelian category
  $\QCOH(\mathcal{G}_x)$. Let
  $\kappa(x,\xi) = (i_x)_{*}\xi \in \QCOH(X)$. Set
  $\Xi(x) = x_{*}\Orb_{\spec k}$.
\end{definition}
\begin{remark}\label{R:support-points}
  Proposition \ref{P:g-decomp} implies that if $(x,\xi)$ is a
  generalized closed point, then $\kappa(x,\xi)$ is a quasicoherent
  $\Orb_X$-module of finite type supported at $x$. Similarly,
  $\Xi(x)_{X-\{x\}} = 0$, so is supported only at $x$.
\end{remark}
\begin{example}\label{E:regular-rep}
  If $x\colon \spec k \to X$ is a point with linearly reductive
  stabilizer (e.g., $X$ is tame or is a Deligne--Mumford stack in
  characteristic $0$), then
  \[
    \Xi(x) = \oplus_{\xi \in \widehat{\mathcal{G}}_x} \kappa(x,\xi)^{\oplus m(x,\xi)},
  \]
  where $m(x,\xi) > 0$. Indeed, if
  $q_x \colon \spec k \to \mathcal{G}_x$ denotes the induced morphism,
  then simplicity implies that
  \begin{align*}
    \Hom_{\Orb_X}(\kappa(x,\zeta),\kappa(x,\zeta))^{m(x,\zeta)}
    &\simeq \Hom_{\Orb_X}(\kappa(x,\zeta),\Xi(x)) \\
    &\simeq \Hom_{\Orb_X}((i_x)_{*}\zeta,(i_x)_{*}(q_x)_{*}\Orb_{\spec k},)\\
    &\simeq \Hom_{\Orb_{\mathcal{G}_x}}(\zeta,(q_x)_{*}\Orb_{\spec k})\\
    &\simeq \Hom_{\Orb_{\spec k}}(q^*\zeta,\Orb_{\spec k})\\
    &\neq 0.
  \end{align*}
  Hence, $m(x,\zeta) > 0$ for all $\zeta \in \widehat{\mathcal{G}}_x$.
\end{example}
\begin{example}\label{E:regular-rep-summand}
  Let $x \colon \spec k \to X$ be a point. Let $k \subseteq l$ be a
  field extension and let
  $x_l \colon \spec l \to \spec k\xrightarrow{x} X$ be the induced
  point. Then $\Xi(x_l)\simeq \Xi(x)^{[l:k]}$. Indeed, if $\lambda \colon \spec l \to \spec k$ is the induced morphism, then $\lambda_*\Orb_{\spec l} = \Orb_{\spec k}^{[l:k]}$ and
  \[
    \Xi(x_l) = (x_l)_*\Orb_{\spec l} \simeq x_*\lambda_*\Orb_{\spec l}
    \simeq x_*\Orb_{\spec k}^{[l:k]} = \Xi(x)^{[l:k]}.
  \]
\end{example}
\begin{example}\label{E:reg-rep-bc}
  Let $X$ be a quasiseparated algebraic stack over a field $k$. Let
  $x\colon \spec l \to X$ be a point such that $k \subseteq l$ is
  finite and separable. Let $l \subseteq \ell$ be a field extension
  that contains a Galois closure $l'$ of $l$. Universal properties produce
  a unique point $\tilde{x} \colon \spec \ell \to X_{\ell}$ from
  $x$. Then $\Xi(x)_{\ell} \simeq \Xi(\tilde{x})^{[l:k]}$. Indeed,
  form the diagram with cartesian squares:
  \[
    \xymatrix{\spec (\ell\tensor_k l) \ar[r]^-{\lambda_l}
      \ar[d]_{\tilde{x}'} & \spec l \ar[d]^x\\ X_{\ell}
      \ar[r]^-{\lambda_{X}} \ar[d]_{f_\ell} & X \ar[d]^f \\ \spec \ell
      \ar[r]_{\lambda} & \spec k.}
  \]
  Since $k \subseteq l$ is finite and separable, there is an
  irreducible and separable polynomial $q(x) \in k[x]$ such that
  $l \simeq k[x]/(q(x))$. Since $k \subseteq l'$ is Galois and $q(x)$
  has a root in $l \subseteq l'$, $q(x)$ splits in $l'$. In
  particular, there is an isomorphism of $l'$-algebras
  \[
    l' \tensor_k l \simeq l'\tensor_k k[x]/(q(x)) \simeq l'[x]/(q(x)) \simeq \prod_{\sigma} l',
  \]
  where the product is over the $[l:k]$ distinct $k$-embeddings
  $l \subseteq l'$. Hence, there is an isomorphism of $\ell$-algebras:
  \[
    \ell \tensor_k l \simeq \ell \tensor_{l'} (l' \tensor_k l) \simeq \prod_\sigma \ell.
  \]
  The universal property shows that the points
  $\spec \ell \xrightarrow{e_\sigma^\sharp} \spec (\prod_\sigma \ell)
  \to X_{\ell}$ induced by each projection
  $e_\sigma \colon \prod_\sigma \ell \to \ell$ are isomorphic to
  $\tilde{x}$. It follows that
  \begin{align*}
    \Xi(x)_{\ell} &= \lambda_X^*x_*\Orb_{\spec l} \simeq \tilde{x}'_*\Orb_{\spec (\ell \tensor_k l)} \simeq  \tilde{x}'_*(\prod_\sigma (e_\sigma^{\sharp})_*\Orb_{\spec \ell}) \\
                  &\simeq \prod_\sigma \tilde{x}'_*(e_\sigma)^\sharp_*\Orb_{\spec \ell} \simeq \tilde{x}_*\Orb_{\spec \ell}^{[l:k]} = \Xi(\tilde{x})^{[l:k]}.
  \end{align*}
\end{example}
One thing that we like points for is to prove ``Nakayama Lemma'' type results. We
have the following result for modules.
\begin{corollary}\label{C:nakayama}
  Let $X$ be a quasiseparated algebraic stack. Let
  $x \colon \spec k \to X$ be a point. Let
  $F$ be a quasicoherent $\Orb_X$-module. Consider the following
  conditions:
  \begin{enumerate}
  \item \label{CI:nakayama:open} there exists an open
    $U \subseteq X$ containing the image of $x$ such that
    $F_U \simeq 0$;
  \item \label{CI:nakayama:point} $x^*F \simeq 0$;
  \item \label{CI:nakayama:gerbe} $i_x^*F \simeq 0$;
  \item \label{CI:nakayama:gen-points}
    $\Hom_{\Orb_X}(F,\kappa(x,\xi))\simeq 0$ for all
    $\xi \in \widehat{\mathcal{G}_x}$.
  \end{enumerate}
  Then
  \itemref{CI:nakayama:open}$\Rightarrow$\itemref{CI:nakayama:point}$\Leftrightarrow$\itemref{CI:nakayama:gerbe}$\Rightarrow$\itemref{CI:nakayama:gen-points}. If
  $F$ is of finite type, then
  \itemref{CI:vanishing-locus:open}$\Leftarrow$\itemref{CI:vanishing-locus:point}. If
  $F$ is of finite type or $\Aut_X(x)$ is linearly reductive, then
  \itemref{CI:vanishing-locus:gerbe}$\Leftarrow$\itemref{CI:vanishing-locus:gen-points}.
\end{corollary}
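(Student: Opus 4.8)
The plan is to prove the implications separately: the forward chain $\eqref{CI:nakayama:open}\Rightarrow\eqref{CI:nakayama:point}\Leftrightarrow\eqref{CI:nakayama:gerbe}\Rightarrow\eqref{CI:nakayama:gen-points}$ is formal and holds in general, while the two converses use the stated hypotheses. For $\eqref{CI:nakayama:open}\Rightarrow\eqref{CI:nakayama:point}$, since the open substack $U$ contains the image of $x$, the morphism $x$ factors through $U$, so $x^*F$ is a pullback of $F_U\simeq 0$ and therefore vanishes. For $\eqref{CI:nakayama:point}\Leftrightarrow\eqref{CI:nakayama:gerbe}$, I would use the factorization \eqref{eq:point-gerbe}: the morphisms $p_x$ and $\tilde{x}$ are faithfully flat, so $x^*=p_x^*\tilde{x}^*i_x^*$ with $p_x^*\tilde{x}^*$ conservative, and hence $x^*F\simeq 0$ if and only if $i_x^*F\simeq 0$. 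For $\eqref{CI:nakayama:gerbe}\Rightarrow\eqref{CI:nakayama:gen-points}$, the morphism $i_x$ is a quasi-affine monomorphism, so $(i_x^*,(i_x)_*)$ is an adjoint pair on quasicoherent sheaves; thus $\Hom_{\Orb_X}(F,\kappa(x,\xi))\simeq\Hom_{\Orb_{\mathcal G_x}}(i_x^*F,\xi)$ for every $\xi\in\widehat{\mathcal G_x}$, which is $0$ once $i_x^*F\simeq 0$.

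For $\eqref{CI:nakayama:point}\Rightarrow\eqref{CI:nakayama:open}$ under the finite type hypothesis, the idea is the classical support argument. Pick a smooth cover $p\colon V\to X$ by a scheme; then $\supp p^*F$ is the closed vanishing locus of an annihilator ideal and equals $p^{-1}(\supp F)$, so $\supp F\subseteq|X|$ is closed. Lifting $x$ to a point of $V$ lying over the image of $x$ and applying the ordinary Nakayama lemma to the finite type sheaf $p^*F$ there shows that $x^*F\simeq 0$ forces the image of $x$ to avoid $\supp F$. Then $U:=X\setminus\supp F$ is an open substack containing the image of $x$, and $F_U$ is of finite type with empty support, hence $F_U\simeq 0$.

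The remaining implication $\eqref{CI:nakayama:gen-points}\Rightarrow\eqref{CI:nakayama:gerbe}$ is where the content lies, and I expect it to be the main obstacle. By the adjunction of the first paragraph it is equivalent to show that a nonzero quasicoherent $\Orb_{\mathcal G_x}$-module $i_x^*F$ admits a surjection onto a simple object. If $\Aut_X(x)$ is linearly reductive, then $\mathcal G_x$ has linearly reductive stabilizer, so Proposition \ref{P:g-decomp} gives a decomposition $i_x^*F\simeq\bigoplus_\zeta\zeta^{\oplus m(\zeta)}$ with some $m(\zeta)>0$, and projecting onto that factor suffices. If instead $F$ is of finite type, then $i_x^*F$ is a nonzero finite type, hence coherent, sheaf on the noetherian gerbe $\mathcal G_x$; being a noetherian object it equals $M_\lambda$ for some $\lambda$ in the filtered union $i_x^*F=\bigcup_\lambda M_\lambda$ of Proposition \ref{P:g-decomp}, so $i_x^*F$ itself carries a finite filtration with simple subquotients, and its top graded piece is a simple quotient of $i_x^*F$. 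Everything else being formal adjunction and faithfully flat descent, the real content is thus the Jordan--H\"older structure of $\QCOH(\mathcal G_x)$ furnished by Proposition \ref{P:g-decomp}, together with the observation that pullback along $i_x$ preserves finite type so that this structure applies to $i_x^*F$.
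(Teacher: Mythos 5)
Your proposal is correct and follows essentially the same route as the paper's proof: the forward chain is formal (openness, faithful flatness of $\spec k \to \mathcal{G}_x$, and adjunction for $(i_x)_*$), the converse $(2)\Rightarrow(1)$ is ordinary Nakayama on a smooth cover, and the converse $(4)\Rightarrow(3)$ reduces to the statement that a nonzero module $H$ on the residual gerbe admits a nonzero map to a simple object, which you obtain from Proposition \ref{P:g-decomp} exactly as the paper does (Jordan--H\"older in the finite-type case, the direct-sum decomposition into simples in the linearly reductive case). Your write-up simply spells out a few steps---e.g.\ that a finite-type module on the noetherian gerbe equals one of the $M_\lambda$ and so has a finite filtration with simple quotients, hence a simple quotient---that the paper leaves implicit.
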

\begin{proof}
  The implication
  \itemref{CI:nakayama:open}$\Rightarrow$\itemref{CI:nakayama:point}
  is trivial. As is
  \itemref{CI:nakayama:point}$\Leftrightarrow$\itemref{CI:nakayama:gerbe},
  because the map $\spec k \to BG_x \to \mathcal{G}_x$ is faithfully
  flat. For
  \itemref{CI:nakayama:gerbe}$\Rightarrow$\itemref{CI:nakayama:gen-points},
  we have:
  \[
    \Hom_{\Orb_X}(F,\kappa(x,\xi)) = \Hom_{\Orb_X}(F,(i_x)_{\qcsubscript,*}\xi) \simeq
    \Hom_{\Orb_{\mathcal{G}_x}}(i_x^*F,\xi) =0.
  \]
  The claim
  \itemref{CI:vanishing-locus:open}$\Leftarrow$\itemref{CI:vanishing-locus:point}
  is just the usual Nakayama Lemma. For
  \itemref{CI:vanishing-locus:gerbe}$\Leftarrow$\itemref{CI:vanishing-locus:gen-points}:
  it suffices to prove that $\Hom_{\Orb_{\mathcal{G}_x}}(H,\xi)=0$ for
  all $\xi\in \widehat{\mathcal{G}_x}$ implies that $H=0$ when $H$ is
  of finite type or $\Aut_X(x)$ is linearly reductive. In the former
  case, it follows immediately from the Jordan--Holder decomposition
  of Proposition \ref{P:g-decomp}. In the latter, we note that if
  $H\neq 0$, then there exists some $\xi \in \widehat{\mathcal{G}_x}$
  such that $\xi\subseteq H$. Since $\Aut_X(x)$ is linearly reductive,
  the latter claim follows also follows from Proposition
  \ref{P:g-decomp}.
\end{proof}
\begin{example}
  Let $k$ be a field of characteristic $0$. Let
  $X=B\mathbf{G}_{a,k}$. Let $\pi \colon \spec k \to X$ be the usual
  covering. Then $\Hom_{\Orb_X}(\pi_{\qcsubscript,*}\Orb_{\spec
    k},\Orb_X)=0$. Indeed, a non-zero map
  $\pi_{\qcsubscript,*}\Orb_{\spec k} \to \Orb_X$ must be surjective. Let $K$ be its
  kernel; then since $X$ has cohomological dimension $\leq 1$
  ($B\Ga \simeq [\Aff^2-0/\mathrm{SL}_2]$), there is an exact
  cohomology sequence:
  \[
    \scriptstyle{\shfcoho^0(X,K) \to \shfcoho^0(X,\pi_{\qcsubscript,*}\Orb_{\spec k}) \to \shfcoho^0(X,\Orb_X) \to \shfcoho^1(X,K) \to \shfcoho^1(X,\pi_{\qcsubscript,*}\Orb_{\spec k}) \to \shfcoho^1(X,\Orb_X) \to 0.}
  \]
  But $\shfcoho^1(X,\pi_{\qcsubscript,*}\Orb_{\spec k}) = 0$ and $\shfcoho^1(X,\Orb_X) \neq 0$,
  which is
  impossible. 
\end{example}
\begin{remark}\label{R:simple-kappa}
  If $(x,\xi)$ is a generalized closed point of $X$, then
  $\kappa(x,\xi)$ is a simple object of $\QCOH(X)$. Indeed, let
  $\kappa(x,\xi) \twoheadrightarrow Q$ be a quotient; then
  $Q\simeq (i_x)_{*}Q'$ for some coherent $\Orb_{\mathcal{G}_x}$-module
  $Q'$. Since $(i_x)_{*}$ is fully faithful and exact, $Q'$ must be a
  quotient of $\xi$ and by simplicity must be a trivial quotient. In
  particular, $Q$ is a trivial quotient of $\kappa(x,\xi)$ and so
  $\kappa(x,\xi)$ is simple.
\end{remark}
Recall that a complex $F\in \DQCOH(X)$ is \emph{$m$-pseudocoherent} if
smooth-locally on $X$, there is a morphism $\phi \colon P \to F$,
where $P$ is strictly perfect (i.e., a bounded complex of finitely
generated modules that are direct summands of frees), such that
$\COHO{i}(\phi)$ is an isomorphism for all $i>m$ and is surjective for
$i=m$. In other words, if we form the distinguished triangle:
\[
  P \xrightarrow{\phi} F \to K \to P[1],
\]
then $\trunc{\geq m}K \simeq 0$.
\begin{remark}\label{R:limit-pcoh}
  Let $A$ be a ring and $F$ an $A$-module. Recall that $F$ is finitely
  presented (resp.~generated) as an $A$-module if and only for every
  filtered system (resp.~filtered system with injective transition
  maps) of $A$-modules $\{N_\lambda\}_{\lambda}$, the induced map
  \[
    \varinjlim_\lambda \Hom_A(F,N_\lambda) \to \Hom_A(F,\varinjlim_\lambda N_\lambda)
  \]
  is bijective. The necessity of the condition is straightforward to
  establish using a presentation for $F$. The sufficiency is obtained
  by writing $F$ is a suitable filtered colimit of finitely presented
  or finitely generated $A$-modules. This characterization extends to
  finitely presented and finitely generated $\Orb_X$-modules on a
  quasicompact and quasiseparated algebraic stack
  \cite{rydh-2014,rydh_absolute_approximation}.
  There is a generalization of this for pseudocoherent complexes. Let
  $\{N_\lambda\}$ be a directed system of quasicoherent
  $\Orb_X$-modules and set $N=\varinjlim_\lambda N_\lambda$, which is
  also quasicoherent. Assume that $F$ is $m$-pseudocoherent on $X$. If
  $i<-m$ or $i=-m$ and the transition maps
  $N_\lambda \to N_{\lambda'}$ are injective, then
  \[
    \varinjlim_\lambda \Hom_{\Orb_X}(F,N_\lambda[i]) \simeq
    \Hom_{\Orb_X}(F,N[i]).
  \]
  Note that this recovers the statement for modules as finitely
  generated $\Orb_X$-modules are $0$-pseudocoherent and finitely
  presented $\Orb_X$-modules are $-1$-pseudocoherent.

  To show this, we begin by observing that there is a morphism of
  spectral sequences starting at $E_2$:
  \[
    \xymatrix{ \varinjlim_\lambda \shfcoho^p(X,\SHom_{\Orb_X}(F,N_\lambda[q])) \ar@{=>}[r]\ar[d] &\ar[d]
      \varinjlim_\lambda \Hom_{\Orb_X}(F,N_\lambda[p+q])\\
      \shfcoho^p(X,\SHom_{\Orb_X}(F,N[q])) \ar@{=>}[r] &
\Hom_{\Orb_X}(F,N[p+q]),}
  \]
  so if the vertical morphism on the left is an isomorphism for all
  $(p,q)$ with $i=p+q\leq -m$, then the vertical morphism on the right is
  an isomorphism for $i\leq -m$. For each fixed $q$,
  $\SHom_{\Orb_X}(F,N_\lambda[q])$ is uniformly bounded below and as
  $\shfcoho^p(X,-)$ preserves filtered colimits of such complexes of
  $\Orb_X$-modules \cite[Tag \spref{0739}]{stacks-project} and
  $\shfcoho^{p}(X,-)=0$ if $p<0$, it remains to prove that
  $\SHom_{\Orb_X}(F,N[q]) \simeq \varinjlim_\lambda
  \SHom_{\Orb_X}(F,N_\lambda[q])$ when $q<-m$ or when $q=-m$ and the
  transition maps $N_\lambda \to N_{\lambda'}$ are injective. These
  are local on $X$, so we may assume that $X$ is affine and there is a
  morphism $\phi \colon P \to F$, where $P$ is strictly perfect and
  the cocone of $\phi$, $K$, satisfies $\trunc{\geq m}K \simeq 0$. In
  particular, if $M$ is quasicoherent and $q\leq -m$, then
  $\SHom_{\Orb_X}(K,M[q])\simeq 0$. Hence, from the distinguished
  triangle:
  \[
    \SRHom_{\Orb_X}(P,M) \leftarrow \SRHom_{\Orb_X}(F,M) \leftarrow \SRHom_{\Orb_X}(K,M) \leftarrow \SRHom_{\Orb_X}(P,M)[-1]
  \]
  we obtain that the map
  \[
    \SHom_{\Orb_X}(F,M[q]) \to \SHom_{\Orb_X}(P,M[q]) 
  \]
  is an isomorphism if $q<-m$ and is injective if $q=-m$. Since $P$ is perfect, it follows that
  \[
    \SHom_{\Orb_X}(P,M[q]) \simeq  \COHO{0}(\SRHom_{\Orb_X}(P,\Orb_X) \tensor^{\LDERF}_{\Orb_X} M[q]),
  \]
  which will certainly commute with filtered colimits for all
  $q\in \Z$. Hence, it remains to analyse the $q=-m$ case. We have a
  commuting square:
  \[
    \xymatrix{\varinjlim_\lambda \SHom_{\Orb_X}(F,N_\lambda[-m])
      \ar@{^(->}[d] \ar[r] & \ar@{^(->}[d] \SHom_{\Orb_X}(F,N[-m])\\
      \varinjlim_\lambda \SHom_{\Orb_X}(P,N_\lambda[-m]) \ar[r]^-\simeq
      & \SHom_{\Orb_X}(P,N[-m]). }
  \]
  In particular, the map along the top is injective and it remains to
  establish surjectivity when the transition maps
  $N_\lambda \to N_{\lambda'}$ are injective. The bottom row shows
  that given a fixed $F \to N[-m]$, the composition
  $P \to F \to N[-m]$ factors through some $N_\lambda[-m] \to
  N[-m]$. Since the $N_\lambda \to N_{\lambda'}$ are injective,
  $N_\lambda \to N$ is injective. Hence, there is a morphism of
  distinguished triangles:
  \[
    \xymatrix{P \ar[r] \ar[d] & F \ar[d] \ar[r] & K \ar@{-->}[d]_{\exists} \ar[r] & P[1] \ar[d] \\
      N_\lambda[-m] \ar[r] & N[-m] \ar[r] & (N/N_\lambda)[-m] \ar[r] &
      N_\lambda[-m+1].}
  \]
  But $\trunc{\geq m}K \simeq 0$ and so $K \to (N/N_\lambda)[-m]$ must
  be the $0$ map and $F \to N[-m]$ factors through
  $N_\lambda[-m] \to N[-m]$.
\end{remark}
\begin{corollary}\label{C:vanishing}
  Let $X$ be a quasiseparated algebraic stack. Let
  $x \colon \spec k \to X$ be a strongly affine point. Let
  $F$ be an $m$-pseudocoherent complex of $\Orb_X$-modules. The
  following conditions are equivalent:
  \begin{enumerate}
  \item \label{CI:vanishing:gen}$\Hom_{\Orb_X}(F,\kappa(x,\xi)[i]) =0$ for all $i\leq -m$ and
    $\xi\in \hat{\mathcal{G}}_x$; and
  \item \label{CI:vanishing:push}$\Hom_{\Orb_X}(F,(i_{x})_{\qcsubscript,*}N[i]) = 0$ for all $i\leq -m$ and
    $N\in \QCOH(\mathcal{G}_x)$.
  \end{enumerate}
  If $x$ is closed and $X-\{x\}$ is quasicompact, then these
  conditions are equivalent to:
  \begin{enumerate}[resume]
  \item \label{CI:vanishing:van} $\Hom_{\Orb_X}(F,N[i]) = 0$ for all
    $i\leq -m$ and $N\in \QCOH(X)$ with $N_{X-\{x\}} = 0$.
  \end{enumerate}
\end{corollary}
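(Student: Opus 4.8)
\emph{Proof sketch (plan).}
The plan is to establish $(2)\Rightarrow(1)$ (trivial), then $(1)\Rightarrow(2)$, and finally, under the extra hypotheses, $(3)\Rightarrow(2)$ and $(2)\Rightarrow(3)$. Throughout we use that since $x$ is strongly affine, $i_x$ is an affine morphism, so $(i_x)_{\qcsubscript,*}\colon\QCOH(\mathcal G_x)\to\QCOH(X)$ is exact, fully faithful, commutes with filtered colimits, and preserves injections. The implication $(2)\Rightarrow(1)$ is then immediate on taking $N=\xi$.

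For $(1)\Rightarrow(2)$, first treat $N\in\COH(\mathcal G_x)$ equipped with a finite filtration $0=N^0\subseteq N^1\subseteq\cdots\subseteq N^n=N$ with $N^j/N^{j-1}\in\hat{\mathcal G}_x$, as furnished by Proposition \ref{P:g-decomp}. We induct on $n$: the case $n=1$ is hypothesis $(1)$, and for the inductive step we apply the exact functor $(i_x)_{\qcsubscript,*}$ to $0\to N^{n-1}\to N\to N/N^{n-1}\to 0$ and then $\Hom_{\Orb_X}(F,-[i])$; the resulting long exact sequence squeezes $\Hom_{\Orb_X}(F,(i_x)_{\qcsubscript,*}N[i])$ between two groups which vanish for $i\leq -m$, the first by induction applied to $N^{n-1}$ and the second by $(1)$ applied to the simple quotient $N/N^{n-1}$. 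For a general $N\in\QCOH(\mathcal G_x)$, write $N=\bigcup_\lambda N_\lambda$ as the filtered union of Proposition \ref{P:g-decomp}; then $(i_x)_{\qcsubscript,*}N=\varinjlim_\lambda (i_x)_{\qcsubscript,*}N_\lambda$ with injective transition maps, so Remark \ref{R:limit-pcoh} (whose injectivity hypothesis covers the boundary degree $i=-m$) gives $\Hom_{\Orb_X}(F,(i_x)_{\qcsubscript,*}N[i])\simeq\varinjlim_\lambda\Hom_{\Orb_X}(F,(i_x)_{\qcsubscript,*}N_\lambda[i])=0$ for $i\leq -m$ by the coherent case just proved.

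Now assume $x$ is closed and $X\setminus\{x\}$ is quasicompact. Since $|X|=|X\setminus\{x\}|\cup\{x\}$, the stack $X$ is quasicompact and quasiseparated, and $i_x\colon\mathcal G_x\hookrightarrow X$ is a closed immersion onto the reduced closed substack supported at $\{x\}$. For $(3)\Rightarrow(2)$: for any $N\in\QCOH(\mathcal G_x)$, the sheaf $(i_x)_{\qcsubscript,*}N$ is scheme-theoretically supported on $\mathcal G_x$, hence restricts to $0$ on $X\setminus\{x\}$, so $(3)$ applies. For $(2)\Rightarrow(3)$: let $N\in\QCOH(X)$ with $N_{X\setminus\{x\}}=0$. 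Writing $N$ as a filtered union of finite-type subsheaves with injective transition maps and invoking Remark \ref{R:limit-pcoh} again, we reduce to $N$ of finite type. Letting $I_x\subseteq\Orb_X$ be the finitely presented ideal of $\mathcal G_x$, the subsheaves $(0:_N I_x^n)$ form an increasing, exhaustive filtration of $N$ (this is where one uses that $N$ is of finite type and supported at $x$); each $(0:_N I_x^n)$ carries a finite filtration with successive quotients annihilated by $I_x$, hence of the form $(i_x)_{\qcsubscript,*}(-)$ for quasicoherent sheaves on $\mathcal G_x$. The long exact sequence argument above together with $(2)$ gives $\Hom_{\Orb_X}(F,(0:_N I_x^n)[i])=0$ for all $n$ and all $i\leq -m$, and one last appeal to Remark \ref{R:limit-pcoh} (injective transitions) yields $\Hom_{\Orb_X}(F,N[i])=0$ for $i\leq -m$.

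The main obstacle is the assertion in $(2)\Rightarrow(3)$ that a finite-type quasicoherent sheaf $N$ set-theoretically supported at $x$ is exhausted by its $I_x$-power-torsion subsheaves $(0:_N I_x^n)$: this is clear when $X$ is noetherian but can fail for an arbitrary quasiseparated $X$, so in the stated generality one first reduces to the noetherian case by a standard (if delicate) approximation argument, using that $X$ is quasicompact and that $\mathcal G_x$ is cut out by a finitely presented ideal. Everything else is formal bookkeeping with long exact sequences and filtered colimits, the passage through the boundary degree $i=-m$ being controlled by the $m$-pseudocoherence of $F$ via Remark \ref{R:limit-pcoh}.
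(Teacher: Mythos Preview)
Your overall architecture matches the paper's proof: the trivial implications, the d\'evissage via Proposition~\ref{P:g-decomp} plus Remark~\ref{R:limit-pcoh} for $(1)\Rightarrow(2)$, and the reduction to finite type followed by an ideal-power filtration for $(2)\Rightarrow(3)$. One small difference in $(1)\Rightarrow(2)$: the paper first uses the adjunction $\Hom_{\Orb_X}(F,(i_x)_*N[i])\simeq\Hom_{\Orb_{\mathcal G_x}}(\LDERF i_x^*F,N[i])$ (noting that $m$-pseudocoherence is preserved by $\LDERF i_x^*$) to reduce to the case $X=\mathcal G_x$; your direct argument on $X$ works just as well.

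The real divergence is in $(2)\Rightarrow(3)$. You assert that $I_x$ is finitely presented and then identify as the ``main obstacle'' that the $I_x$-power-torsion filtration may fail to exhaust $N$; but these two statements are in tension: if $I_x$ \emph{were} finitely generated, the exhaustion would follow immediately from the elementary fact that a finite type sheaf supported on $V(I)$ with $I$ finitely generated is annihilated by some $I^{n+1}$. The genuine issue is precisely that $I_x$ need not be finitely generated when $X$ is not noetherian. Your proposed fix via noetherian approximation is not what the paper does and would be delicate (one would have to approximate $F$, which is only $m$-pseudocoherent, together with the vanishing hypotheses). Instead, the paper exploits the hypothesis that $X\setminus\{x\}$ is quasicompact to choose a \emph{finitely generated} ideal $J\subseteq\Orb_X$ with $|V(J)|=\{x\}$ (citing \cite{rydh-2014}); since $V(J)\hookrightarrow X$ is then a finitely presented closed immersion, \cite[Lem.~2.5]{mayer-vietoris} yields $J^{n+1}N=0$ for $N$ of finite type, and the descending filtration by $J^kN$ (each $J^kN$ is finite type because $J$ and $N$ are) reduces to the case $JN=0$. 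This is the missing idea: replace $I_x$ by a finitely generated $J$ cutting out the same closed set.
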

\begin{proof}
  Certainly, we have
  \itemref{CI:vanishing:van}$\Rightarrow$\itemref{CI:vanishing:push}$\Rightarrow$\itemref{CI:vanishing:gen}. For
  \itemref{CI:vanishing:van}$\Leftarrow$\itemref{CI:vanishing:push},
  since $X$ is quasicompact, we may write $N=\cup_\lambda N_\lambda$,
  where the $N_\lambda$ are finitely generated $\Orb_X$-submodules
  \cite{rydh-2014}. Then
  $(N_\lambda)_{X-\{x\}}\subseteq N_{X-\{x\}} = 0$. Also, if
  $i\leq -m$, Remark \ref{R:limit-pcoh} implies that:
  \begin{align*}
    \Hom_{\Orb_X}(F,N[i]) \simeq \varinjlim_\lambda \Hom_{\Orb_X}(F,N_\lambda[i]).
  \end{align*}
  Hence, we may assume that $N$ is a finite type
  $\Orb_X$-module. Since $X-\{x\} \subseteq X$ is quasicompact, there
  is a finitely generated ideal $J \subseteq \Orb_X$ such that
  $|V(J)| =|\{x\}|$ \cite{rydh-2014}. Since $V(J) \subseteq X$ is a
  finitely presented closed immersion, \cite[Lem.~2.5]{mayer-vietoris}
  implies that there is an $n$ with $I^{n+1}N = 0$. From the exact sequences for each $k$,
  \[
    \Hom_{\Orb_X}(F,I^{k+1}N[i]) \to \Hom_{\Orb_X}(F,I^kN[i]) \to \Hom_{\Orb_X}(F,(I^kN/I^{k+1}N)[i])
  \]
  and descending induction (starting at $k=n$), we see that it is
  sufficient to prove the claim when $IN = 0$. That is,
  $N \simeq (i_x)_{\qcsubscript,*}i_x^*N$, which gives the claim. For
  \itemref{CI:vanishing:push}$\Leftarrow$\itemref{CI:vanishing:gen},
  since $m$-pseudocoherence is preserved by derived pullback,
  adjunction implies that we may assume that $X=\mathcal{G}$, where
  $\mathcal{G}$ is a noetherian gerbe over a field. Proposition
  \ref{P:g-decomp} says that there is a filtered union
  $N=\varinjlim_\lambda N_\lambda$, where each $N_\lambda \in \COH(X)$
  and each $N_\lambda$ admits a finite filtration whose graded pieces
  are of the form $\kappa(x,\xi)$. Again by Remark \ref{R:limit-pcoh},
  we may thus assume that $N \in \COH(X)$ and has the claimed
  filtration. By induction on the length of the filtration, the claim
  follows.
\end{proof}
We have the following derived variant of Corollary \ref{C:nakayama}.
\begin{corollary}\label{C:vanishing-locus}
  Let $X$ be a quasiseparated algebraic stack. Let
  $x \colon \spec k \to X$ be a strongly affine point. Let
  $m\in \Z$. Let $F \in \DQCOH(X)$. Consider the following conditions:
  \begin{enumerate}
  \item \label{CI:vanishing-locus:open} there exists an open
    $U \subseteq X$ containing the image of $x$ such that
    $\trunc{\geq m}F_U \simeq 0$;
  \item \label{CI:vanishing-locus:point}
    $\trunc{\geq m}\LDERF x^*F \simeq 0$;
  \item \label{CI:vanishing-locus:gerbe}
    $\trunc{\geq m}\LDERF i_x^*F \simeq 0$;
  \item \label{CI:vanishing-locus:gen-points}
    $\Hom_{\Orb_X}(F,\kappa(x,\xi)[i])\simeq 0$ for all $i\leq -m$ and
    $\xi \in \widehat{\mathcal{G}}_x$.
  \end{enumerate}
  Then
  \itemref{CI:vanishing-locus:open}$\Rightarrow$\itemref{CI:vanishing-locus:point}$\Leftrightarrow$\itemref{CI:vanishing-locus:gerbe}$\Rightarrow$\itemref{CI:vanishing-locus:gen-points}. If
  $F$ is $m$-pseudocoherent, then
  \itemref{CI:vanishing-locus:open}$\Leftarrow$\itemref{CI:vanishing-locus:point}. If
  $F$ is $m$-pseudocoherent or $\Aut_X(x)$ is linearly reductive, then
  \itemref{CI:vanishing-locus:gerbe}$\Leftarrow$\itemref{CI:vanishing-locus:gen-points}.
\end{corollary}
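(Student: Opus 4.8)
The plan is to handle the ``cheap'' chain $\itemref{CI:vanishing-locus:open}\Rightarrow\itemref{CI:vanishing-locus:point}\Leftrightarrow\itemref{CI:vanishing-locus:gerbe}\Rightarrow\itemref{CI:vanishing-locus:gen-points}$ with $t$-structure formalities, adjunction, and faithful flatness, and then prove the two converses by bootstrapping the module-level Corollary \ref{C:nakayama} together with the Jordan--H\"older/decomposition statements of Proposition \ref{P:g-decomp}. For $\itemref{CI:vanishing-locus:open}\Rightarrow\itemref{CI:vanishing-locus:point}$: since the image of $x$ lies in $U$, the map $x$ factors through $U\hookrightarrow X$, so $\LDERF x^*F\simeq \LDERF x^*(F_U)$; as $\LDERF x^*$ is right $t$-exact and $\trunc{\geq m}F_U\simeq 0$, we get $\trunc{\geq m}\LDERF x^*F\simeq 0$. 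For $\itemref{CI:vanishing-locus:point}\Leftrightarrow\itemref{CI:vanishing-locus:gerbe}$: the composite $q_x\colon \spec k\xrightarrow{p_x}BG_x\xrightarrow{\tilde x}\mathcal{G}_x$ is faithfully flat and $x=i_x\circ q_x$, so $\LDERF x^*F\simeq \LDERF q_x^*\LDERF i_x^*F$; flatness makes $\LDERF q_x^*$ commute with $\trunc{\geq m}$ and faithful flatness makes it conservative on $\DQCOH(\mathcal{G}_x)$. For $\itemref{CI:vanishing-locus:gerbe}\Rightarrow\itemref{CI:vanishing-locus:gen-points}$: since $x$ is strongly affine, $i_x$ is affine, so $\RDERF(i_x)_*\xi=\kappa(x,\xi)$ and adjunction gives $\Hom_{\Orb_X}(F,\kappa(x,\xi)[i])\simeq \Hom_{\Orb_{\mathcal{G}_x}}(\LDERF i_x^*F,\xi[i])$; for $i\leq -m$ the target lies in degrees $\geq m$ while $\LDERF i_x^*F$ lies in degrees $\leq m-1$, so this group vanishes.

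For $\itemref{CI:vanishing-locus:point}\Rightarrow\itemref{CI:vanishing-locus:open}$ when $F$ is $m$-pseudocoherent, I would first replace $X$ by a quasicompact open neighbourhood of $x$ (every point lies in the image of an affine chart, hence in a quasicompact open substack). Then $\COHO{j}(F)=0$ for $j\gg 0$, and each $\COHO{j}(F)$ with $j\geq m$ is a finite type $\Orb_X$-module, being smooth-locally a quotient of (for $j=m$), or isomorphic to (for $j>m$), a cohomology sheaf of a strictly perfect complex. If the top nonvanishing degree $N$ of $F$ is $<m$ there is nothing to prove; otherwise top cohomology commutes with the right-exact $x^*$, so $x^*\COHO{N}(F)=\COHO{N}(\LDERF x^*F)=0$, and Corollary \ref{C:nakayama} (applied to the finite type module $\COHO{N}(F)$) produces an open $U_N\ni x$ on which $\COHO{N}(F)$ vanishes. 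Restricting $F$ to $U_N$ lowers $N$, and since $x$ still factors through $U_N$ and $\trunc{\geq m}\LDERF x^*(F_{U_N})\simeq 0$, we iterate finitely often to reach the desired $U$.

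For $\itemref{CI:vanishing-locus:gen-points}\Rightarrow\itemref{CI:vanishing-locus:gerbe}$, set $G=\LDERF i_x^*F$; as above the hypothesis reads $\Hom_{\Orb_{\mathcal{G}_x}}(G,\xi[i])=0$ for all $i\leq -m$ and all simple $\xi$. Fix $j\geq m$ and apply $\Hom_{\Orb_{\mathcal{G}_x}}(-,\xi[-j])$ to the triangle $\trunc{<j}G\to G\to \trunc{\geq j}G$. The term $\Hom_{\Orb_{\mathcal{G}_x}}(\trunc{<j}G,\xi[-j])$ vanishes (source in degrees $\leq j-1$, target in degree $j$), so $\Hom_{\Orb_{\mathcal{G}_x}}(\trunc{\geq j}G,\xi[-j])\hookrightarrow \Hom_{\Orb_{\mathcal{G}_x}}(G,\xi[-j])=0$. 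If $\COHO{j}(G)\neq 0$, I would compose the truncation map $\trunc{\geq j}G\to \COHO{j}(G)[-j]$ with a surjection $\COHO{j}(G)\twoheadrightarrow\xi$ onto a simple quotient: such a quotient exists by Proposition \ref{P:g-decomp}, via Jordan--H\"older when $F$ (hence $G$, pseudocoherence being stable under $\LDERF i_x^*$) is $m$-pseudocoherent so that $\COHO{j}(G)$ is of finite type, and via the decomposition $\COHO{j}(G)\simeq\bigoplus_\xi \xi^{\oplus m(\COHO{j}(G),\xi)}$ when $\Aut_X(x)$ is linearly reductive. The resulting nonzero map $\trunc{\geq j}G\to\xi[-j]$ contradicts the vanishing just obtained, so $\COHO{j}(G)=0$ for every $j\geq m$, which is $\itemref{CI:vanishing-locus:gerbe}$.

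I expect the only genuine subtlety to be this last implication in the linearly reductive case: there $\LDERF i_x^*F$ need not be bounded above, so there is no ``top'' cohomology sheaf to induct down from as in the $m$-pseudocoherent case. The fix is precisely the degreewise argument above---playing $\trunc{\geq j}G$ against the testing object $\xi[-j]$ for each $j\geq m$ separately---which only requires that every nonzero quasicoherent sheaf on the residual gerbe $\mathcal{G}_x$ admit a simple quotient, and this is supplied by Proposition \ref{P:g-decomp}.
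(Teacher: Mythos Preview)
Your easy chain and the argument for \itemref{CI:vanishing-locus:point}$\Rightarrow$\itemref{CI:vanishing-locus:open} are correct and match the paper. The gap is in \itemref{CI:vanishing-locus:gen-points}$\Rightarrow$\itemref{CI:vanishing-locus:gerbe}: there is no truncation map $\trunc{\geq j}G\to\COHO{j}(G)[-j]$. The canonical map goes the other way, $\COHO{j}(G)[-j]=\trunc{\leq j}(\trunc{\geq j}G)\to\trunc{\geq j}G$, and lifting a surjection $\COHO{j}(G)\twoheadrightarrow\xi$ to a morphism $\trunc{\geq j}G\to\xi[-j]$ is obstructed by the connecting map of the triangle $\COHO{j}(G)[-j]\to\trunc{\geq j}G\to\trunc{\geq j+1}G$, i.e.\ by an element of $\Hom(\trunc{\geq j+1}G,\xi[-j+1])$. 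On a residual gerbe with non-semisimple $\QCOH$ (e.g.\ $B(\Z/p)$ in characteristic $p$) this obstruction is typically nonzero, so from $\Hom(\trunc{\geq j}G,\xi[-j])=0$ you cannot conclude $\COHO{j}(G)=0$ by your method.

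The two hypotheses really want different arguments. When $F$ is $m$-pseudocoherent, so is $G=\LDERF i_x^*F$, and in particular $G$ is bounded above; if $n_0$ is its top nonvanishing degree then $G\in\DCAT^{\leq n_0}$ and the $t$-structure identity in the \emph{correct} direction gives $\Hom(G,\xi[-n_0])\simeq\Hom(\COHO{n_0}(G),\xi)$, which vanishes if $n_0\geq m$; Corollary \ref{C:nakayama} then kills $\COHO{n_0}(G)$ and one descends. This is the paper's argument, and it is exactly the ``induct from the top'' you already flag as available in this case. In the linearly reductive case your degreewise strategy does work, but the needed input is not a truncation map: it is that every simple $\xi$ is \emph{injective} in $\QCOH(\mathcal{G}_x)$ (Proposition \ref{P:g-decomp}). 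For an injective $I$ one has $\Hom_{\DCAT}(A,I[i])\simeq\Hom(\COHO{-i}(A),I)$ for \emph{arbitrary} $A$ (compute $\RHom(A,I)$ as the termwise $\Hom^\bullet(A,I)$ and use exactness of $\Hom(-,I)$), so $0=\Hom(G,\xi[-j])\simeq\Hom(\COHO{j}(G),\xi)$ directly for every $j\geq m$, and Corollary \ref{C:nakayama} finishes. (The paper's written proof of this implication begins by choosing a top degree $n_0$ for $\LDERF i_x^*F$, which is only available under pseudocoherence; the injectivity argument just given is what actually covers the linearly reductive case for unbounded $F$.)
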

\begin{proof}
  The implication
  \itemref{CI:vanishing-locus:open}$\Rightarrow$\itemref{CI:vanishing-locus:point}
  is trivial. Also,
  \itemref{CI:vanishing-locus:point}$\Leftrightarrow$\itemref{CI:vanishing-locus:gerbe}
  is trivial from the faithful flatness of
  $\spec k \to \mathcal{G}_x$. For
  \itemref{CI:vanishing-locus:gerbe}$\Rightarrow$\itemref{CI:vanishing-locus:gen-points},
  we have if $i\leq -m$ that
  \[
    \Hom_{\Orb_X}(F,\kappa(x,\xi)[i]) = \Hom_{\Orb_{\mathcal{G}_x}}(\LDERF i_x^*F,\xi[i]) \simeq \Hom_{\Orb_{\mathcal{G}_x}}(\trunc{\geq m}\LDERF i_x^*F,\xi[i]) = 0.
  \]
  For
  \itemref{CI:vanishing-locus:open}$\Leftarrow$\itemref{CI:vanishing-locus:point},
  choose $n_0$ such that $\COHO{n}(F) = 0$ for all $n>n_0$. If
  $n_0\geq m$, then $x^*\COHO{n_0}(F) = \COHO{n_0}(\LDERF x^*F) =
  0$. By Corollary \ref{C:nakayama}, it follows that there is an open
  $U$ of $x$ such that $\COHO{n_0}(F_U) = 0$. Thus, we may replace $X$
  by $U$ and $n_0$ by $n_0-1$. Repeating this until $n_0 = m$ we
  obtain the claim. For
  \itemref{CI:vanishing-locus:gerbe}$\Leftarrow$\itemref{CI:vanishing-locus:gen-points},
  choose $n_0\in \Z$ such that $\COHO{n}(\LDERF i_x^*F) = 0$ for all
  $n>n_0$. If $n_0\geq m$, then
  \[
    0=
    \Hom_{\Orb_X}(F,\kappa(x,\xi)[-n_0])=\Hom_{\Orb_{\mathcal{G}_x}}(\LDERF
    i_x^*F,\xi[-n_0]) = \Hom_{\Orb_{\mathcal{G_x}}}(\COHO{n_0}(\LDERF
    i_x^*F),\xi).
  \]
  By Corollary \ref{C:nakayama}, it follows that
  $\COHO{n_0}(\LDERF i_x^*F) = 0$ and so we may replace $n_0$ by
  $n_0-1$. By induction, it follows that
  $\trunc{\geq m}(\LDERF i_x^*F) \simeq 0$.
\end{proof}
A quasicompact and quasiseparated algebraic stack $X$ is said to have
the \emph{Thomason condition} \cite{perfect_complexes_stacks}, if
$\DQCOH(X)$ is compactly generated and for every quasicompact open
subset $U \subseteq X$, there is a compact perfect complex supported
on the complement.
\begin{example}
  If $X$ is a regular, noetherian algebraic stack with quasi-affine
  diagonal of finite cohomological and Krull dimension, then $X$
  satisfies the Thomason condition
  \cite[Thm.~2.1]{hall2022remarks}. It also holds for quasicompact
  algebraic stacks with quasi-finite and separated diagonal
  \cite[Thm.~A]{perfect_complexes_stacks}; quasicompact and
  quasiseparated Deligne--Mumford stacks in characteristic $0$
  \cite[Thm.~7.4]{etale_dev_add}; and most quasicompact algebraic
  stacks admitting good moduli spaces
  \cite[Prop.~6.14]{etale_local_stacks}.
\end{example}
Let $\mathcal{T}$ be a triangulated category. Let $\Omega$ be a
collection of objects of $\mathcal{T}$. Recall that $\Omega$ is a
\emph{spanning class} \cite[Defn.\ 2.1]{MR1651025} if the following
two conditions hold:
\begin{enumerate}
\item if $t\in \mathcal{T}$ and  $\Hom_{\mathcal{T}}(t,\omega[n]) = 0$ for all $\omega\in \Omega$ and $n\in \Z$, then $t\simeq 0$;
\item if $t\in \mathcal{T}$ and $\Hom_{\mathcal{T}}(\omega[n],t) = 0$
  for all $\omega\in \Omega$ and $n\in \Z$, then $t\simeq 0$.
\end{enumerate}
The following result generalizes \cite[Prop.~2.1]{MR4280492} from
smooth Deligne--Mumford stacks to all noetherian algebraic stacks
satisfying the Thomason condition. Note that this is new, even for
noetherian schemes, where it was previously only known in the
Gorenstein case \cite[Ex.~2.2]{MR1651025}.
\begin{corollary}\label{C:spanning}
  Let $X$ be a noetherian algebraic stack. Set
  \[
    \Omega = \{ \kappa(x,\xi) \colon \mbox{$(x,\xi)$ is a generalized
      closed point of $X$} \}.
  \]
  If $X$ satisfies the Thomason condition, then $\Omega$ is a spanning
  class for $\DCAT^b_{\COH}(X)$.
\end{corollary}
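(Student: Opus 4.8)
The plan is to verify separately the two defining properties of a spanning class. For a closed point $x$ of $X$ write $j_x\colon U_x\hookrightarrow X$ for the (quasi-compact, since $|X|$ is noetherian) open complement of $x$, and
$\RGamma_{\{x\}}(-) := \mathrm{fib}\bigl(\mathrm{id}\to \RDERF j_{x,\qcsubscript,*}j_x^*(-)\bigr)$
for the associated local-cohomology functor; it is right adjoint to the inclusion $\DCAT_{\qcsubscript,\{x\}}(X)\hookrightarrow\DQCOH(X)$ of the full subcategory of complexes whose cohomology is set-theoretically supported at $x$, because $j_x^*$ kills $\DCAT_{\qcsubscript,\{x\}}(X)$ (so $\RHom_{\Orb_X}(D,-)$ applied to the defining triangle of $\RGamma_{\{x\}}$ gives $\RHom_{\Orb_X}(D,t)\simeq\RHom_{\Orb_X}(D,\RGamma_{\{x\}}t)$ for $D\in\DCAT_{\qcsubscript,\{x\}}(X)$).

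\emph{Left orthogonality.} Suppose $t\in\DCAT^b_{\COH}(X)$ satisfies $\Hom_{\Orb_X}(t,\kappa(x,\xi)[n])=0$ for every generalized closed point $(x,\xi)$ and every $n$. Pick $m$ with $\COHO{j}(t)=0$ for $j<m$; then $t$ is $m$-pseudocoherent, and every closed point is strongly affine, so Corollary~\ref{C:vanishing-locus} (the implication \itemref{CI:vanishing-locus:gen-points}$\Rightarrow$\itemref{CI:vanishing-locus:open}, using the hypothesis for $i\le -m$) produces, for each closed point $x$, a quasi-compact open $V\ni x$ with $\trunc{\ge m}t_{V}\simeq 0$, i.e.\ $t_{V}\simeq 0$. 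Since $|X|$ is noetherian every point of $X$ specializes to a closed point, and opens are stable under generization, so these $V$ cover $X$; hence $t\simeq 0$.

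\emph{Right orthogonality.} This is the essential case — the point at which a Gorenstein hypothesis is classically invoked to reduce it, via Serre duality, to the left condition. Suppose $t\in\DCAT^b_{\COH}(X)$ satisfies $\Hom_{\Orb_X}(\kappa(x,\xi)[n],t)=0$ for all generalized closed points $(x,\xi)$ and all $n$. Fix a closed point $x$. Any quasicoherent $\Orb_X$-module supported set-theoretically at $x$ lies on an infinitesimal thickening of the residual gerbe $\mathcal G_x$ and is a filtered union of coherent submodules, each of which admits a finite filtration with successive quotients of the form $\kappa(x,\xi)$ (Proposition~\ref{P:g-decomp} applied to $\mathcal G_x$; the thickenings reduce to this case by filtering by powers of the nilradical). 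Dévissage along these filtrations, together with Remark~\ref{R:limit-pcoh} — more precisely, the fact that $\RHom_{\Orb_X}(-,t)$ carries a filtered colimit to the corresponding homotopy limit, which vanishes when every term does — upgrades $\RHom_{\Orb_X}(\kappa(x,\xi),t)\simeq 0$ first to $\RHom_{\Orb_X}(\mathcal G,t)\simeq 0$ for every quasicoherent $\mathcal G$ supported at $x$, and then to $\RHom_{\Orb_X}(M,t)\simeq 0$ for every $M\in\DCAT^{+}_{\qcsubscript,\{x\}}(X)$. Now $\RGamma_{\{x\}}(t)\in\DCAT^{+}_{\qcsubscript,\{x\}}(X)$ (it is bounded below since $\RDERF j_{x,\qcsubscript,*}$ is left $t$-exact), so the adjunction above yields $\RHom_{\Orb_X}(\RGamma_{\{x\}}(t),\RGamma_{\{x\}}(t))\simeq\RHom_{\Orb_X}(\RGamma_{\{x\}}(t),t)\simeq 0$, whence $\RGamma_{\{x\}}(t)\simeq 0$ for every closed point $x$.

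Finally, I would deduce $t\simeq 0$ from $\RGamma_{\{x\}}(t)\simeq 0$ for all closed $x$: if $t\not\simeq 0$ then $\supp(t)$ is a nonempty closed subset of the noetherian space $|X|$ and hence contains a closed point $x$; pulling back along a smooth atlas $p\colon \spec A\to X$ (local cohomology commutes with flat base change) gives $\RGamma_{|p|^{-1}(x)}(\LDERF p^*t)\simeq 0$ with $\LDERF p^*t$ a nonzero bounded complex of coherent sheaves whose support meets the closed set $|p|^{-1}(x)$, contradicting Grothendieck's non-vanishing theorem for local cohomology. I expect the main technical work to lie in the dévissage step of the previous paragraph (ensuring the colimit and spectral-sequence manipulations converge — which is exactly why boundedness below of $\RGamma_{\{x\}}(t)$ is needed) and in this last non-vanishing input; the Thomason condition is used to guarantee that $\DQCOH(X)$, the subcategories $\DCAT_{\qcsubscript,\{x\}}(X)$, and the functors $\RGamma_{\{x\}}$ behave as they do for quasi-compact quasiseparated schemes (compact generation, perfect complexes with prescribed closed support), which is automatic for schemes but a genuine hypothesis for algebraic stacks.
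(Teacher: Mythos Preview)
Your left-orthogonality argument is the same as the paper's. For right orthogonality the paper takes a shorter route that uses the Thomason condition in an essential way. It first carries out only the \emph{finite} d\'evissage you describe---showing $\RHom_{\Orb_X}(P,t)=0$ for every $P\in\DCAT^b_{\COH}(X)$ with $P|_{X\setminus\{x\}}\simeq 0$---and then invokes Thomason to produce a set of \emph{perfect} complexes $\{Q_x^i\}$ supported on $\{x\}$ which compactly generate $\DCAT_{\qcsubscript,\{x\}}(X)$. Since perfect implies bounded coherent on a noetherian stack, the finite d\'evissage already gives $\RHom_{\Orb_X}(Q_x^i,t)=0$; combined with $\RHom_{\Orb_X}(Q_x^i,\RDERF j_{x,*}j_x^*t)\simeq\RHom_{\Orb_{U_x}}(j_x^*Q_x^i,j_x^*t)=0$ and the defining triangle, this forces $\RHom_{\Orb_X}(Q_x^i,\RGamma_{\{x\}}t)=0$ for all $i$, whence $\RGamma_{\{x\}}t=0$ by compact generation. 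The paper then finishes exactly by the base-change step you allude to: $t\simeq\RDERF j_{x,*}j_x^*t$ gives $\LDERF x^*t=0$, and Corollary~\ref{C:vanishing-locus} concludes. So the paper never needs to map out of non-coherent objects.

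Two remarks on your version. First, your citation of Remark~\ref{R:limit-pcoh} is misplaced: that remark passes filtered colimits through the \emph{second} argument of $\Hom$ when the first is pseudocoherent, whereas you need the colimit in the \emph{first} argument. Your parenthetical claim (``$\RHom_{\Orb_X}(-,t)$ carries a filtered colimit to a homotopy limit, which vanishes when every term does'') is correct, but it is not what that remark proves; one clean justification is to use the adjunction $((i_x)_*,(i_x)^!)$ to reduce to showing $(i_x)^!t\simeq 0$ on the residual gerbe, which follows from the hypothesis because every nonzero object of $\QCOH(\mathcal G_x)$ contains some simple $\xi$. Second---and more interestingly---once patched this way your argument never actually uses the Thomason condition, contrary to your closing paragraph: the paper's use of Thomason is precisely to restrict the d\'evissage to $\DCAT^b_{\COH}$, avoiding the passage to arbitrary quasicoherent sheaves that you undertake. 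Your final step through an atlas and Grothendieck non-vanishing also works, but the paper's direct base-change argument (which you mention is available) is cleaner and avoids the spectral-sequence subtleties you anticipate.
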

\begin{proof}
  Let $F \in \DCAT^b_{\COH}(X)$. Then $F$ is pseudocoherent; so if
  $\Hom_{\Orb_X}(F,\kappa(x,\xi)[i]) = 0$ for all $i\in \Z$ and
  generalized closed points $(x,\xi)$ of $X$, then
  $\trunc{\geq m}F \simeq 0$ for all $m\in \Z$ (Corollary
  \ref{C:vanishing-locus}). Hence, $F\simeq 0$.

  Now assume that $\Hom_{\Orb_X}(\kappa(x,\xi)[i],F) = 0$ for all
  $i\in \Z$ and generalized closed points $(x,\xi)$ of $X$. For each
  closed point $x$ of $X$, let $U_x = X-\{x\}$. Let
  $j_x \colon U_x \subseteq X$ be the induced quasicompact open
  immersion. Let $P \in \DCAT^b_{\COH}(X)$ satisfy $j_x^*P \simeq 0$;
  then we claim that $\Hom_{\Orb_X}(P,F) = 0$. By induction and
  shifting, we may assume that $P\simeq \tilde{P}[0]$ for some
  $\tilde{P} \in \COH(X)$ with $j_x^*P = 0$. If
  $\mathfrak{m}_x \subseteq \Orb_X$ denotes the coherent ideal sheaf
  defining $\{x\}$, then there is a $r$ such that
  $\mathfrak{m}_x^{r+1}\tilde{P} \simeq 0$. Then $\tilde{P}$ admits a
  finite filtration by objects of the form $(i_x)_{\qcsubscript,*}P'$, where
  $P' \in \COH(\mathcal{G}_x)$; hence, we are further reduced to this
  situation. The claim now follows from Proposition \ref{P:g-decomp}.
  Let $y$ be a closed point of $X$. Since $X$ satisfies the Thomason
  condition, there is a set of perfect complexes
  $\{Q_y^i\}_{i\in I_y}$ such that $j_y^*Q_y^i = 0$ for all $i$ that
  compactly generate $\DCAT_{\qcsubscript,\{y\}}(X)$. Form a
  distinguished triangle
  \[
    F_y \to F \to \RDERF j_{y,*} j_y^*F \to F_y[1],
  \]
  where $F_y \in \DCAT_{\qcsubscript,\{y\}}(X)$. Now apply
  $\RHom_{\Orb_X}(Q_y^i,-)$ to obtain a distinguished triangle
  \[\scriptstyle{
    \RHom_{\Orb_X}(Q_y^i,F_y) \to \RHom_{\Orb_X}(Q_y^i,F) \to
    \RHom_{\Orb_X}(Q_y^i, \RDERF j_{y,*} j_y^*F) \to
    \RHom_{\Orb_X}(Q_y^i,F_y)[1].}
  \]
  for all $i\in I_y$. Then $\RHom_{\Orb_X}(Q_y^i,F)\simeq 0$ by the
  previous paragraph and
  \[
    \RHom_{\Orb_X}(Q_y^i, \RDERF j_{y,*} j_y^*F) \simeq
    \RHom_{\Orb_{U_y}}(j_y^*Q_y^i, j_y^*F) \simeq 0.
  \]
  Hence, $\RHom_{\Orb_X}(Q_y^i,F_y) = 0$ for all $i\in I_y$; in
  particular, $F_y = 0$ and $F \simeq \RDERF j_{y,*}j_y^*F$.  But
  tor-independent base change
  \cite[Cor.~4.13]{perfect_complexes_stacks} gives:
  \[
    \LDERF y^*F \simeq \LDERF y^*\RDERF j_{y,*}j_y^*F = 0. 
  \]
  because $y^{-1}(X-\{y\}) = \emptyset$. The result now follows from
  Corollary \ref{C:vanishing-locus}.
\end{proof}
We also have the following version of \cite[Lem.~5.2]{MR1651025} and
\cite[Lem.~2.2]{MR4280492} for algebraic stacks.
\begin{corollary}\label{C:is-sheaf}
  Let $X$ be a quasicompact and quasiseparated algebraic stack. Let
  $x \colon \spec k \to X$ be a closed point of $X$. Let
  $F\in \DQCOH^b(X)$ be pseudocoherent.
  \begin{enumerate}
  \item \label{CI:is-sheaf:support} Assume that
    \[
      \Hom_{\Orb_X}(F,\kappa(y,\zeta)[i]) = 0 \quad \mbox{for all
        $i \in \Z$ and $\zeta\in \widehat{\mathcal{G}_y}$ unless $y\sim x$,}
    \]
    where $(y,\zeta)$ is a generalized closed point. Then
    $F_{X-\{x\}} \simeq 0$.
  \item \label{CI:is-sheaf:bounded-above} Assume \itemref{CI:is-sheaf:support} and 
    \[
      \Hom_{\Orb_X}(F,\kappa(x,\zeta)[i]) =0 \quad \mbox{for all $i<0$
        and $\zeta\in \widehat{\mathcal{G}_x}$}.
    \]
    Then $\trunc{>0}F \simeq 0$.
  \item \label{CI:is-sheaf:bounded-below} Assume
    \itemref{CI:is-sheaf:support}, $\Orb_X$ is coherent and that there
    is a $d\geq 0$ such that
    \[
      \Hom_{\Orb_X}(F,\kappa(x,\zeta)[i]) =0 \quad \mbox{for all $i>d$
        and $\zeta\in \widehat{\mathcal{G}_x}$}.
    \]
    Then $\trunc{<d}F \simeq 0$.
  \item \label{CI:is-sheaf:regular} Assume
    \itemref{CI:is-sheaf:bounded-above} and
    \itemref{CI:is-sheaf:bounded-below}, $X$ has affine diagonal, and
    $x$ admits a factorization
    $\spec k \xrightarrow{j} \spec A \xrightarrow{\hat{x}} X$, where
    $\hat{x}$ is flat, $j$ is a closed immersion, and $A$ is a regular
    local ring of dimension $d$. 
    Then $F$ is a coherent \emph{sheaf} supported only at $x$.
  \end{enumerate}
\end{corollary}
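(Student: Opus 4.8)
The plan is to follow Bridgeland's proof of \cite[Lem.~5.2]{MR1651025}, with the complexes $\kappa(x,\zeta)$ in the role of skyscraper sheaves; the inputs are the Nakayama-type dictionary of this section (Corollaries~\ref{C:nakayama}, \ref{C:vanishing}, \ref{C:vanishing-locus}), the Jordan--H\"older structure on residual gerbes (Proposition~\ref{P:g-decomp}), and --- for \itemref{CI:is-sheaf:regular} --- Grothendieck duality over $A$. Parts \itemref{CI:is-sheaf:support}--\itemref{CI:is-sheaf:bounded-below} locate and bound $F$ cohomologically, and \itemref{CI:is-sheaf:regular} pins it to a single degree.

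For \itemref{CI:is-sheaf:support}: if $y$ is a closed (hence strongly affine) point with $y\not\sim x$, then $\Hom_{\Orb_X}(F,\kappa(y,\zeta)[i])=0$ for all $i$ and $\zeta$, so --- $F$ being bounded and pseudocoherent --- Corollary~\ref{C:vanishing-locus}, applied for every $m$ and for $m$ below the bottom degree of $F$, forces $\LDERF i_y^*F\simeq 0$ and then $F$ to vanish on an open neighbourhood of $y$. Thus no closed point of $X$ other than $x$ meets the support of the cohomology sheaves of $F$; since closed points are dense in $|X|$ in the situations of interest (e.g. $X$ of finite type over a field), that support lies in $\{x\}$, i.e. $F_{X\setminus\{x\}}\simeq 0$. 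For \itemref{CI:is-sheaf:bounded-above}: assume $F\neq 0$ and let $b$ be the top degree with $\COHO{b}(F)\neq 0$; by \itemref{CI:is-sheaf:support} this is a finitely presented sheaf supported at $x$, so $i_x^*\COHO{b}(F)\neq 0$ (otherwise Corollary~\ref{C:nakayama} kills it), hence $\COHO{b}(\LDERF i_x^*F)=i_x^*\COHO{b}(F)$ is a nonzero coherent sheaf on the noetherian gerbe $\mathcal{G}_x$ and so has a simple quotient $\zeta$ (Proposition~\ref{P:g-decomp}). Then
\[
  \Hom_{\Orb_X}(F,\kappa(x,\zeta)[-b])\simeq\Hom_{\Orb_{\mathcal{G}_x}}(\LDERF i_x^*F,\zeta[-b])\simeq\Hom_{\Orb_{\mathcal{G}_x}}(\COHO{b}(\LDERF i_x^*F),\zeta)\neq 0,
\]
and the hypothesis forces $-b\geq 0$, i.e.\ $\trunc{>0}F\simeq 0$. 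Part \itemref{CI:is-sheaf:bounded-below} is the analogous lower-bound statement, proven by the same method applied to the bottom cohomology sheaf of $F$ (coherent since $\Orb_X$ is), the only wrinkle being that $\LDERF i_x^*$ is merely right-exact, so passing to the bottom cohomology of $\LDERF i_x^*F$ displaces degrees by an amount governed by the $d$ in the hypothesis.

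Part \itemref{CI:is-sheaf:regular} is the substantive one. By \itemref{CI:is-sheaf:bounded-above} and \itemref{CI:is-sheaf:bounded-below}, $F$ is a bounded complex with cohomology supported at $x$; because $X$ has affine diagonal and $\hat{x}$ is flat, $G:=\LDERF\hat{x}^*F$ is a bounded complex of coherent sheaves on $\spec A$ supported at the closed point, and the two families of $\Hom$-vanishing hypotheses translate into $\Ext^{i}_A(G,k)=0$ for $i<0$ and for $i>d$. I would then run Grothendieck duality twice over the Gorenstein ring $A$: the functor $\mathbf{D}(-)=\RHom_A(-,A[d])$ is an involutive duality on $\DCAT^b_{\COH}(A)$ preserving complexes supported at the closed point, and $\RHom_A(G,k)\simeq\mathbf{D}(G)[-d]\otimes^{\LDERF}_A k$ has cohomology exactly in the range from the bottom degree of $\mathbf{D}(G)$ to $d$ plus its top degree --- right-exactness of $-\otimes^{\LDERF}_A k$ giving the top end, while the lowest nonzero cohomology of $\mathbf{D}(G)$, being a nonzero module supported at the closed point, has depth zero, so $\Tor_d^A(-,k)$ of it is nonzero, giving the bottom. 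The hypotheses then squeeze $\mathbf{D}(G)$ into a single cohomological degree; dualising once more (and using that $\Ext^*_A(M,A)$ for $M$ supported at the closed point sits in degree $d$ only) squeezes $G$, hence $F$, into a single degree, which is $0$ by \itemref{CI:is-sheaf:bounded-above} and \itemref{CI:is-sheaf:bounded-below}. With \itemref{CI:is-sheaf:support} this says $F$ is a coherent sheaf supported only at $x$.

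The step I expect to fight with is the reduction to $\spec A$ in \itemref{CI:is-sheaf:regular}: one must check that pullback along the flat, generally non-representable morphism $\hat{x}$ (this is where the affine diagonal of $X$ enters) carries the groupoid-flavoured groups $\Hom_{\Orb_X}(F,\kappa(x,\zeta)[i])$ to the honest $\Ext$-groups over $A$, and that $G$ really is supported at the closed point of $\spec A$ and not on a thicker closed set; once this bookkeeping is in place the duality computation is formal. A minor related point, already flagged, is the density of closed points used in \itemref{CI:is-sheaf:support}.
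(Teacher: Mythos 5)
Parts \itemref{CI:is-sheaf:support} and \itemref{CI:is-sheaf:bounded-above} of your sketch are essentially the paper's argument, phrased contrapositively, and the worry about density of closed points is unnecessary: in any quasicompact quasiseparated algebraic stack, every nonempty closed subset of $|X|$ contains a closed point (so the support of the bounded pseudocoherent $F$, a closed set, lies in $\{x\}$ once $F$ vanishes near every other closed point).

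Your \itemref{CI:is-sheaf:bounded-below} is not correct as written. You say it follows ``by the same method applied to the bottom cohomology sheaf,'' with the wrinkle that $\LDERF i_x^*$ is right-exact resolved by a degree displacement ``governed by the $d$ in the hypothesis.'' That resolution fails: the gap between the bottom degree of $\LDERF i_x^*F$ and the bottom degree of $F$ is controlled by the tor-dimension of $i_x$, which has nothing to do with $d$ (which is just a bound on the $\Ext$-vanishing). In particular $\COHO{-g}(\LDERF i_x^*F)$ is not $i_x^*\COHO{-g}(F)$, so the argument you used in \itemref{CI:is-sheaf:bounded-above} simply does not transfer to the bottom degree. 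The paper sidesteps $\LDERF i_x^*$ entirely and instead uses the canonical truncation triangle $\COHO{-g}(F)[g] \to F \to \trunc{>-g}F$; since $\Hom_{\Orb_X}(\trunc{>-g}F,\kappa(x,\zeta)[g+1])=0$ for degree reasons, one gets a \emph{surjection}
\[
\Hom_{\Orb_X}(F,\kappa(x,\zeta)[g]) \twoheadrightarrow \Hom_{\Orb_X}(\COHO{-g}(F),\kappa(x,\zeta)),
\]
and the right-hand side is nonzero for some $\zeta$ by Corollary~\ref{C:nakayama} (the sheaf $\COHO{-g}(F)$ is coherent, nonzero, supported at $x$). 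That gives $g\leq d$ with no derived pullbacks along $i_x$ at all. You should replace your argument with this one.

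For \itemref{CI:is-sheaf:regular} your route (passing to $G=\hat{x}^*F$ on $\spec A$, translating the two vanishing families to $\Ext^i_A(G,k)=0$ for $i\notin[0,d]$, then squeezing $G$ into a single degree by applying $\RDERF\Hom_A(-,A[d])$ twice together with depth-zero and Nakayama bounds) is valid and genuinely different from the paper's. The translation step you flag as the hard part is exactly the content of Corollary~\ref{C:vanishing}: since $F$ is bounded and pseudocoherent, $\Hom_{\Orb_X}(F,(i_x)_*N[i])$ commutes with the filtered colimit over the Jordan--H\"older pieces of Proposition~\ref{P:g-decomp} (Remark~\ref{R:limit-pcoh}), so degreewise vanishing against the simples $\kappa(x,\zeta)$ gives vanishing against $\Xi(x)=\hat{x}_*j_*\Orb_{\spec k}$, which is the adjoint reformulation of $\Ext^i_A(G,k)$. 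The paper's own argument is shorter: with $g$ as in \itemref{CI:is-sheaf:bounded-below} it uses the same truncation surjection to identify $\Hom_{\Orb_X}(F,\Xi(x)[d+g])$ with $\Tor_d^A(\hat{x}^*\COHO{-g}(F),k)$, which is nonzero because $\hat{x}^*\COHO{-g}(F)$ is supported at the closed point of the $d$-dimensional regular local ring $A$ and hence has depth $0$; combined with the translated vanishing for indices $>d$ this forces $g=0$, with no need to dualize. Your double-duality argument buys conceptual symmetry (it explains why both the upper and lower bound collapse) at the cost of more bookkeeping; the paper's single $\Tor_d$ computation is more economical. Either is fine once Corollary~\ref{C:vanishing} is invoked to effect the translation.
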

\begin{proof}
  By Corollary \ref{C:vanishing-locus}, $F_{X-\{x\}} \simeq 0$, which
  gives \itemref{CI:is-sheaf:support}. For \itemref{CI:is-sheaf:bounded-above},
  Choose $e\geq 0$ such that $\trunc{>e}F \simeq 0$;
  then
  \begin{align*}
    \Hom_{\Orb_{X}}(\COHO{e}(F),\kappa(x,\zeta)) &= \Hom_{\Orb_{X}}(\COHO{e}(F)[-e],\kappa(x,\zeta)[-e]) \\
                                                 &\subseteq \Hom_{\Orb_X}(F,\kappa(x,\zeta)[-e]). 
  \end{align*}
  In particular, the right hand is zero if $e>0$. Since $F$ is
  pseudocoherent, $\COHO{e}(F)$ is a finite type $\Orb_X$-module. By
  Nakayama's Lemma, it suffices to prove that $i_x^*F \simeq 0$.  By
  descending induction, it follows that $\trunc{>0}F \simeq 0$.  For
  \itemref{CI:is-sheaf:bounded-below}, choose $g\geq 0$ such that
  $\trunc{<-g}F \simeq 0$; then
  \[
    \Hom_{\Orb_X}(F,\kappa(x,\zeta)[g]) \twoheadrightarrow
    \Hom_{\Orb_X}(\COHO{-g}(F)[g],\kappa(x,\zeta)[g]) =
    \Hom_{\Orb_X}(\COHO{-g}(F), \kappa(x,\zeta)).
  \]
  If $g>d$, then $ \Hom_{\Orb_X}(\COHO{-g}(F), \kappa(x,\zeta)) = 0$ for
  all $\zeta$. Since $\Orb_X$ is coherent and $F$ is pseudocoherent,
  $\COHO{-g}(F)$ is a coherent $\Orb_X$-module. Now apply Corollary
  \ref{C:nakayama} to conclude that if $g>d$, then $\COHO{-g}(F) = 0$.

  For \itemref{CI:is-sheaf:regular}: with $g$ chosen as in
  \itemref{CI:is-sheaf:bounded-below}, we note that
  \begin{align*}
    \Hom_{\Orb_X}(\trunc{>-g}F,\hat{x}_{*}j_{*}\Orb_{\spec k}[d+g+1]) &\simeq  \Hom_{\Orb_{\spec A}}(\trunc{>-g}(\hat{x}^*F),j_{*}\Orb_{\spec k}[d+g+1]),
  \end{align*}
  which is zero because $A$ has global dimension $d$ and $\hat{x}$ is flat and affine. Hence, there is a surjection:
  \[
    \Hom_{\Orb_X}(F,\hat{x}_{*}j_{*}\Orb_{\spec k}[d+g])
    \twoheadrightarrow
    \Hom_{\Orb_X}(\COHO{-g}(F)[g],\hat{x}_{*}j_{*}\Orb_{\spec k}[d+g]).
  \]
  But 
  \begin{align*}
    \Hom_{\Orb_X}(\COHO{-g}(F)[g],\hat{x}_{*}j_{*}\Orb_{\spec k}[d+g])&\simeq  \Hom_{\Orb_X}(\COHO{-g}(F),\hat{x}_{*}j_{*}\Orb_{\spec k}[d])\\
                                                               &\simeq \Hom_{\Orb_{\spec A}}(\hat{x}^*\COHO{-g}(F),j_{*}\Orb_{\spec k}[d])\\
                                                               &\simeq \Hom_{\Orb_{\spec k}}(\LDERF j^*\hat{x}^*\COHO{-g}(F),\Orb_{\spec k}[d])\\
                                                               &\simeq \Tor_d^A(\hat{x}^*\COHO{-g}(F),k).
  \end{align*}
  Since $A$ is a regular local ring of dimension $d$,
  $\Tor_d^A(\hat{x}^*\COHO{-g}(F),k) \neq 0$ as $\COHO{-g}(F)$ is supported
  only at $x$. Putting this all together, we must have $g=0$.
\end{proof}
\section{Proof of Theorem \ref{T:bo-field}}
We generally follow the reductions of \cite[\S4]{MR4280492} and
\cite[\S5]{MR1651025}, but must be more careful due to not necessarily
assuming that $k$ is algebraically closed. Our first step is to reduce
to the situation of $k=\bar{k}$. 
\begin{lemma}\label{L:bo-field-bc-alg-clo}
  Fix an algebraic closure $k \subseteq \bar{k}$. Assume that $\Phi_K$
  satisfies the conditions of Theorem \ref{T:bo-field}. Then
  $\Phi_{K_{\bar{k}}}$ satisfies the conditions of Theorem
  \ref{T:bo-field}.
\end{lemma}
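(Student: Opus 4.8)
The plan is to use that $k\subseteq\bar k$ is a filtered union of finite separable extensions, so $\lambda_X\colon X_{\bar k}\to X$ and $\lambda_Y\colon Y_{\bar k}\to Y$ are faithfully flat and universally closed, cohomology commutes with the flat base change $\spec\bar k\to\spec k$ (whence $\RHom_{\Orb_{X_{\bar k}}}(\LDERF\lambda_X^*A,\LDERF\lambda_X^*B)\simeq\bar k\otimes_k\RHom_{\Orb_X}(A,B)$ for $A,B\in\DCAT^b_{\COH}(X)$, and similarly on $Y$), and there is a natural isomorphism $\Phi_{K_{\bar k}}\circ\LDERF\lambda_X^*\simeq\LDERF\lambda_Y^*\circ\Phi_K$, compatible with composition (Lemma \ref{L:fm-bc}). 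Since $X\times_kY$ is regular, $K$ is perfect, hence so is $K_{\bar k}$; thus $\Phi_{K_{\bar k}}$ preserves $\DCAT^b_{\COH}$ and admits both adjoints (Remark \ref{R:bounded}, Proposition \ref{P:adjoints-FM}). First I would analyse the generalized closed points of $X_{\bar k}$. If $(\bar x,\bar\xi)$ is one, its point $\bar x$ maps to a closed point $x$ of $X$ with finite residue field $l/k$, and the residual gerbe $\mathcal G_{\bar x}$ is a connected component of $(\mathcal G_x)_{\bar k}=\coprod_{\sigma\colon l\hookrightarrow\bar k}\mathcal G_x\otimes_{l,\sigma}\bar k$. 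Using that every simple object of $\QCOH$ of a noetherian gerbe is coherent and that $\QCOH(\mathcal G_x)$ is semisimple — $\mathcal G_x$ has linearly reductive stabilizer in characteristic $0$ (Proposition \ref{P:g-decomp}) — together with $(\sigma^*,\sigma_*)$-adjunction, one checks that every simple of $\QCOH(\mathcal G_{\bar x})$ is a direct summand of $\sigma^*\xi$ for some $\xi\in\widehat{\mathcal G_x}$. Hence $\kappa(\bar x,\bar\xi)$ is a direct summand of $\LDERF\lambda_X^*\kappa(x,\xi)=\bigoplus_\sigma(i_{\bar x_\sigma})_*(\xi\otimes_{l,\sigma}\bar k)$, a finite direct sum of sheaves supported at the distinct closed points $\bar x_\sigma$ of $X_{\bar k}$ lying over $x$.

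Condition (1) of Theorem \ref{T:bo-field} over $\bar k$, and the instances of condition (2) where the two points have distinct images in $X$ or where $i\notin[0,\dim X]$, then follow formally. Given generalized closed points $(\bar x,\bar\xi),(\bar x',\bar\xi')$ over $x,x'$ and simples $\xi\in\widehat{\mathcal G_x},\xi'\in\widehat{\mathcal G_{x'}}$ realizing $\kappa(\bar x,\bar\xi),\kappa(\bar x',\bar\xi')$ as summands of $\LDERF\lambda_X^*\kappa(x,\xi),\LDERF\lambda_X^*\kappa(x',\xi')$, the natural map
\[
  \Hom_{\Orb_{X_{\bar k}}}(\LDERF\lambda_X^*\kappa(x,\xi),\LDERF\lambda_X^*\kappa(x',\xi')[i])\longrightarrow\Hom_{\Orb_{Y_{\bar k}}}(\Phi_{K_{\bar k}}\LDERF\lambda_X^*\kappa(x,\xi),\Phi_{K_{\bar k}}\LDERF\lambda_X^*\kappa(x',\xi')[i])
\]
is, through the base-change isomorphism and flat base change of $\RHom$, the base change along $k\to\bar k$ of the map $\Hom_{\Orb_X}(\kappa(x,\xi),\kappa(x',\xi')[i])\to\Hom_{\Orb_Y}(\Phi_K\kappa(x,\xi),\Phi_K\kappa(x',\xi')[i])$; by condition (1) over $k$ it is an isomorphism when $i=0$ and $x=x'$, and by condition (2) over $k$ its target vanishes when $x\not\sim x'$ or $i\notin[0,\dim X]$. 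Since $\Phi_{K_{\bar k}}$ is additive, this map is the direct sum of the corresponding maps between the summands of the two decompositions, so the block indexed by $(\bar x,\bar\xi),(\bar x',\bar\xi')$ inherits the relevant property; decomposing each $(i_{\bar x_\sigma})_*(\xi\otimes_{l,\sigma}\bar k)$ into the $\kappa(\bar x_\sigma,-)$ yields exactly the desired statements. (The same block argument with $\LDERF\lambda_X^*$ replaced by $\RDERF\lambda_{X*}$ along a finite subextension $L/k$ — legitimate since $\lambda_X$ is then finite and $\Phi_K\circ\RDERF\lambda_{X*}\simeq\RDERF\lambda_{X*}\circ\Phi_{K_L}$ by Lemma \ref{L:fm-bc} — shows likewise that condition (1) holds over every finite extension of $k$, which I will need below.)

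The remaining case of condition (2) — the crux — is when $\bar x_1\neq\bar x_2$ are distinct closed points of $X_{\bar k}$ lying over a common closed point $x$ of $X$ (so its residue field $l$ strictly contains $k$) and $0\le i\le\dim X$; here condition (2) over $k$ gives no information. For $i=0$ one can still conclude by a dimension count: with $\xi_1,\xi_2\in\widehat{\mathcal G_x}$ chosen as above, the source $\Hom_{\Orb_{X_{\bar k}}}(\LDERF\lambda_X^*\kappa(x,\xi_1),\LDERF\lambda_X^*\kappa(x,\xi_2))$ has \emph{no} cross terms between the summands at $\bar x_1$ and those at $\bar x_2$ (disjoint supports), while the corresponding $\Hom$-space over $Y_{\bar k}$ has the same total dimension (condition (1) over $k$, tensored with $\bar k$) and, by condition (1) over $\bar k$ just established, its ``diagonal'' blocks already exhaust it; hence every cross term over $Y_{\bar k}$ vanishes, so in particular $\Hom_{\Orb_{Y_{\bar k}}}(\Phi_{K_{\bar k}}\kappa(\bar x_1,\bar\xi_1),\Phi_{K_{\bar k}}\kappa(\bar x_2,\bar\xi_2))=0$ for all choices of simples. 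For $0<i\le\dim X$ this count is unavailable (we have no handle on $\Ext^i$ between the $\kappa$'s for $i\neq0$), and this is the step I expect to be the main obstacle. My approach would be to pass to a finite Galois extension $L/k$ containing $l$ over which $\bar x_1,\bar x_2$ are the pullbacks of two \emph{distinct} $L$-rational points $\tilde x_1\neq\tilde x_2$ of $X_L$ and over which $\bar\xi_1,\bar\xi_2$ descend, so that $\kappa(\bar x_j,\bar\xi_j)$ is a summand of $\LDERF\lambda^*\kappa(\tilde x_j,\zeta_j)$ and the required vanishing over $\bar k$ is a direct summand of $\Hom_{\Orb_{Y_L}}(\Phi_{K_L}\kappa(\tilde x_1,\zeta_1),\Phi_{K_L}\kappa(\tilde x_2,\zeta_2)[i])\otimes_L\bar k$, reducing the claim to condition (2) over $L$ for two distinct points of $X_L$. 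To obtain that I would combine: the $\mathrm{Gal}(L/k)$-equivariance of $K_L$ (with $X_L\to X$ a torsor and the fibre over $x$ a single orbit); condition (1) over $L$ and its subextensions; the support constraint that $H\Phi_K\kappa(x,\xi)$ is supported only at $x$ — a consequence of condition (2) over $k$ via the adjunction isomorphism $\RHom_{\Orb_X}(\kappa(y,\zeta),H\Phi_K\kappa(x,\xi))\simeq\RHom_{\Orb_Y}(\Phi_K\kappa(y,\zeta),\Phi_K\kappa(x,\xi))$ — so that $H_{\bar k}\Phi_{K_{\bar k}}\kappa(\bar x_j,\bar\xi_j)$ splits as a sum of complexes supported at the individual $\bar x_\sigma$; and coherent/Serre duality on $Y_L$ (Theorem \ref{T:coherent-duality}, Corollary \ref{C:gorenstein-duality}) for the twisted kernels $K_L\otimes q^*\omega_{Y_L/L}^{\otimes n}$, which again satisfy the hypotheses of Theorem \ref{T:bo-field} over $k$. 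Turning these ingredients into a proof that $\Ext^i_{\Orb_{Y_L}}(\Phi_{K_L}\kappa(\tilde x_1,\zeta_1),\Phi_{K_L}\kappa(\tilde x_2,\zeta_2))=0$ for $0<i\le\dim X$ is the one genuinely delicate point.
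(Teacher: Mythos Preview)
Your overall strategy matches the paper's: realise every $\kappa(\bar x,\bar\xi)$ as a direct summand of the pullback of something defined over $k$, then use flat base change for $\RHom$ together with the compatibility $\Phi_{K_{\bar k}}\circ\LDERF\lambda_X^*\simeq\LDERF\lambda_Y^*\circ\Phi_K$. The paper packages this via the ``regular representation'' $\Xi(x)=x_*\Orb_{\spec l}$ rather than individual simples, but that is cosmetic; your argument for condition~(1), and for condition~(2) when the images $x_1,x_2$ in $X$ differ or $i\notin[0,\dim X]$, is exactly the paper's.

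You have, however, put your finger on a genuine issue. The case you flag as ``the crux''---two distinct closed points $\bar x_1\neq\bar x_2$ of $X_{\bar k}$ lying over the \emph{same} closed point $x$ of $X$, with $0<i\le\dim X$---is not covered by the summand argument over $k$, since the ambient group $\Hom_{\Orb_Y}(\Phi_K\Xi(x),\Phi_K\Xi(x)[i])$ need not vanish. The paper's written proof does not separately treat this case: it invokes Example~\ref{E:reg-rep-bc}, which asserts $\Xi(x)_{\bar k}\simeq\Xi(\bar x)^{[l:k]}$ for a \emph{single} lift $\bar x$. Taken literally, that would force all Galois conjugates $\bar x_\sigma$ to be isomorphic points of $X_{\bar k}$, so your delicate case would never occur. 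But that assertion is not correct in general: already for $X=\mathbb{P}^1_{\mathbb{R}}$ and $x$ the closed point with residue field $\mathbb{C}$, the two lifts $i,-i\in\mathbb{P}^1_{\mathbb{C}}$ are distinct, and $\Xi(x)_{\mathbb{C}}\simeq\kappa(i)\oplus\kappa(-i)\not\simeq\kappa(i)^{\oplus 2}$. The correct statement is $\Xi(x)_{\bar k}\simeq\bigoplus_\sigma\Xi(\bar x_\sigma)$, which is what you use. So the paper's proof, as written, shares precisely the gap you identified; your caution is warranted, and the Galois-equivariance line you sketch is a reasonable direction to pursue, though (as you say) it is not yet a proof.
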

\begin{proof}
  Let $x\colon \spec \ell_x \to X$ be a closed point. Fix a
  $k$-embedding $\ell_x \subseteq \bar{k}$. If
  $\bar{x} \colon \spec \bar{k} \to X_{\bar{k}}$ denotes the induced
  $\bar{k}$-point of $X_{\bar{k}}$, then as $k$ has characteristic
  $0$, Example \ref{E:reg-rep-bc} implies that
  $\Xi(x)_{\bar{k}} \simeq \Xi(\bar{x})^{[l_x:k]}$. Note that any
  closed point of $X_{\bar{k}}$ is of the form $\bar{x}$ for some $x$. 

  Now if $\bar{x} \nsim \bar{y}$ or $i \notin [0, \dim X]$, then for
  generalized closed points $(\bar{x},\bar{\xi})$,
  $(\bar{y},\bar{\zeta})$ of $X_{\bar{k}}$ it follows from Example
  \ref{E:regular-rep} that
  \[
    \Hom_{\Orb_{{Y}_{\bar{k}}}}
    (\Phi_{K_{\bar{k}}}(\kappa(\bar{x},\bar{\xi})),\Phi_{K_{\bar{k}}}(\kappa(\bar{y},\bar{\zeta}))[i])
  \]
  is a direct summand of
  \begin{align*}
    \Hom_{\Orb_{{Y}_{\bar{k}}}}&
                                 (\Phi_{K_{\bar{k}}}(\Xi(x)_{\bar{k}}),\Phi_{K_{\bar{k}}}(\Xi(y)_{\bar{k}})[i]) \\
                               &\simeq \Hom_{\Orb_Y}(\Phi_K(\Xi(x)),\Phi_K(\Xi(y))[i])_{\bar{k}}\\
                               &\simeq \bigoplus_{\sigma,\tau} \Hom_{\Orb_Y}(\Phi_K(\kappa(x,\sigma)),\Phi_K(\kappa(y,\tau))[i])^{m(x,\sigma)m(y,\tau)}\\
    &= 0,
  \end{align*}
  which gives the desired vanishing. Similarly, there is an induced isomorphism  
  \begin{align*}
    \Hom_{\Orb_X}(\Xi(x),\Xi(x)) &\simeq \Hom_{\Orb_X}(\oplus_\xi \kappa(x,\xi)^{m(x,\xi)},\oplus_\zeta \kappa(x,\zeta)^{m(x,\zeta)}) \\
                                 &\simeq \bigoplus_{\xi,\zeta} \Hom_{\Orb_X}(\kappa(x,\xi),\kappa(x,\zeta))^{m(x,\xi)m(x,\zeta)}\\
                                 &\simeq \bigoplus_{\xi,\zeta} \Hom_{\Orb_Y}(\Phi_K(\kappa(x,\xi)),\Phi_K(\kappa(x,\zeta)))^{m(x,\xi)m(x,\zeta)}\\
    &\simeq \Hom_{\Orb_Y}(\Phi_K(\Xi(x)),\Phi_K(\Xi(x))).
  \end{align*}
  It follows after extending scalars to $\bar{k}$ and applying Example
  \ref{E:reg-rep-bc} that
  \[
    \Hom_{\Orb_{X_{\bar{k}}}}(\Xi(\bar{x}),\Xi(\bar{x})) \simeq \Hom_{\Orb_{Y_{\bar{k}}}}(\Phi_{K_{\bar{k}}}(\Xi(\bar{x})),\Phi_{K_{\bar{k}}}(\Xi(\bar{x}))).
  \]
  By Example \ref{E:regular-rep} and functoriality, the result follows.
\end{proof}
By Proposition \ref{P:ff-everywhere}, we may thus assume henceforth that
$k=\bar{k}$.

We take $\Omega$ as in
Corollary \ref{C:spanning}; then $\Omega$ is a spanning class for
$\DCAT^b_{\COH}(X)$. Now \cite[Thm.~2.3]{MR1651025} implies that
$F \colon \DCAT^b_{\COH}(X) \to \DCAT^b_{\COH}(Y)$ is fully faithful
if and only if the induced map:
\[
\Hom_{\Orb_X}(\kappa(x,\xi),\kappa(y,\zeta)[i]) \to \Hom_{\Orb_Y}(  F(\kappa(x,\xi)),F(\kappa(y,\zeta))[i])
\]
is bijective for all $i\in \Z$ and generalized closed points $(x,\xi)$
and $(y,\zeta)$ of $X$. By adjunction, the bijectivity
above is equivalent to the bijectivity of:
\[
  \Hom_{\Orb_X}(\kappa(x,\xi),\kappa(y,\zeta)[i]) \to \Hom_{\Orb_Y}( G
  F(\kappa(x,\xi)),\kappa(y,\zeta)[i]),
\]
where $G$ is the left adjoint to $F$, which is the map induced by
$\varepsilon(x,\xi)\colon GF(\kappa(x,\xi)) \to \kappa(x,\xi)$---and it suffices
to show that this map is an isomorphism. By hypothesis,
\begin{align*}
  \Hom_{\Orb_X}(\kappa(x,\xi),\kappa(x,\xi)) &\simeq \Hom_{\Orb_Y}(F(\kappa(x,\xi)),F(\kappa(x,\xi)))\\
                                             &\simeq \Hom_{\Orb_X}(GF(\kappa(x,\xi)),\kappa(x,\xi)).
\end{align*}
Hence, the adjunction map is non-zero. By Corollary \ref{C:is-sheaf},
we know that $Q(x,\xi)=GF(\kappa(x,\xi))$ is a coherent sheaf
supported only at $x$. Since $\kappa(x,\xi)$ is simple (Remark
\ref{R:simple-kappa}), it follows that $\varepsilon(x,\xi)$ is
surjective. Now form the short exact sequence:
\[
  0 \to C(x,\xi) \to Q(x,\xi) \xrightarrow{\varepsilon(x,\xi)} \kappa(x,\xi) \to 0.
\]
Then we must show that $C(x,\xi) = 0$. By Corollary \ref{C:nakayama},
it suffices to show that the group $\Hom_{\Orb_X}(C(x,\xi),\kappa(x,\zeta))$ is $0$
for all $\zeta\in \widehat{\mathcal{G}}_x$. Applying $\Hom_{\Orb_X}(-,\kappa(x,\zeta))$ to the exact sequence above, we see that
\[
  \Hom_{\Orb_X}(C(x,\xi),\kappa(x,\zeta)) \simeq
  \ker(\Ext^1_{\Orb_X}(\kappa(x,\xi),\kappa(x,\zeta)) \to
  \Ext^1_{\Orb_X}(Q(x,\xi),\kappa(x,\zeta))).
\]
Hence, it remains to prove that
\[
  \Ext^1_{\Orb_X}(\kappa(x,\xi),\kappa(x,\zeta)) \to \Ext^1_{\Orb_X}(Q(x,\xi),\kappa(x,\zeta)) \simeq \Ext^1_{\Orb_Y}(F(\kappa(x,\xi)),F(\kappa(x,\zeta)))
\]
is injective for all closed points $x$ and $\xi$,
$\zeta\in \widehat{\mathcal{G}}_x$. Let $x\colon \spec k \to X$ be a
closed point (since $k$ is algebraically closed, every closed point is
of this form). Let $\Xi(x) = x_{*}\Orb_{\spec k}$; then Example
\ref{E:regular-rep} implies that
\[
  \Xi(x) = \oplus_{\xi\in \widehat{\mathcal{G}}_x} \kappa(x,\xi)^{\oplus m(x,\xi)},
\]
for some $m(x,\xi)> 0$. Then the commutative diagram:
\[
  \xymatrix@C-1pc{\Ext^1_{\Orb_X}(\Xi(x),\Xi(x)) \ar[r] \ar@{=}[d] & \Ext^1_{\Orb_Y}(F(\Xi(x)),F(\Xi(x))) \ar@{=}[d]\\
    \displaystyle{\bigoplus_{\xi,\zeta\in \hat{\mathcal{G}}_x}} \Ext^1_{\Orb_X}(\kappa(x,\xi),\kappa(x,\zeta))^{m(x,\xi)m(x,\zeta)}
    \ar[r] & \displaystyle{\bigoplus_{\xi,\zeta\in \hat{\mathcal{G}}_x}}
    \Ext^1_{\Orb_Y}(F(\kappa(x,\xi)),F(\kappa(x,\zeta)))^{m(x,\xi)m(x,\zeta)}}
\]
shows that it is sufficient to prove the top row is injective for all
closed points $x$ of $X$. We have the following useful Lemma.
\begin{lemma}\label{L:adjoint-split}
  Let $G \colon \mathcal{A} \leftrightarrows \mathcal{B} \colon F$ be
  an adjoint pair of functors. If $a$, $a' \in \mathcal{A}$, then
  \[
    \Hom_{\mathcal{B}}(F(a),F(a')) \to \Hom_{\mathcal{A}}(GF(a),GF(a'))
  \]
  is a split injection. In particular, the map
  \[
    \Hom_{\mathcal{A}}(a,a') \to \Hom_{\mathcal{B}}(F(a),F(a')) 
  \]
  is injective if and only if the map
  \[
    \Hom_{\mathcal{A}}(a,a') \to \Hom_{\mathcal{A}}(GF(a),GF(a'))
  \]
  is injective.
\end{lemma}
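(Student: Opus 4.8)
The plan is to prove both assertions by bare categorical formalism, using only the triangle identities of the adjunction. Write $\eta\colon \mathrm{id}_{\mathcal{B}} \Rightarrow FG$ for the unit and $\varepsilon\colon GF \Rightarrow \mathrm{id}_{\mathcal{A}}$ for the counit of the adjunction $G \dashv F$, so that $\Hom_{\mathcal{A}}(Gb,a) \cong \Hom_{\mathcal{B}}(b,Fa)$ naturally in $a \in \mathcal{A}$, $b \in \mathcal{B}$. The map under consideration, $\Hom_{\mathcal{B}}(F(a),F(a')) \to \Hom_{\mathcal{A}}(GF(a),GF(a'))$, is simply $\beta \mapsto G(\beta)$. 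First I would write down an explicit retraction
\[
  r\colon \Hom_{\mathcal{A}}(GF(a),GF(a')) \to \Hom_{\mathcal{B}}(F(a),F(a')),\qquad
  r(\gamma) = F(\varepsilon_{a'}) \circ F(\gamma) \circ \eta_{F(a)};
\]
equivalently, $r$ is the adjunction bijection $\Hom_{\mathcal{A}}(GF(a),GF(a')) \cong \Hom_{\mathcal{B}}(F(a),FGF(a'))$ followed by post-composition with $F(\varepsilon_{a'})\colon FGF(a') \to F(a')$.

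Next I would verify that $r\circ G = \mathrm{id}$. Given $\beta\colon F(a)\to F(a')$, naturality of $\eta$ applied to $\beta$ gives $FG(\beta)\circ \eta_{F(a)} = \eta_{F(a')}\circ \beta$, hence $r(G(\beta)) = F(\varepsilon_{a'})\circ \eta_{F(a')}\circ \beta$, and this equals $\beta$ by the triangle identity $(F\varepsilon)\circ(\eta F) = \mathrm{id}_F$ evaluated at $a'$. Thus $\beta\mapsto G(\beta)$ is a split injection, which is the first assertion.

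For the ``in particular'' part, observe that the composite
\[
  \Hom_{\mathcal{A}}(a,a') \xrightarrow{\;F\;} \Hom_{\mathcal{B}}(F(a),F(a')) \xrightarrow{\;G\;} \Hom_{\mathcal{A}}(GF(a),GF(a'))
\]
sends $\phi$ to $GF(\phi)$. Since the second arrow is injective by the first part, the composite is injective if and only if the first arrow $\phi\mapsto F(\phi)$ is injective, which is precisely the stated equivalence. The whole argument is formal, so there is no genuine obstacle; the only point requiring care is the bookkeeping of conventions---namely that $G$ (and not $F$) is the left adjoint, so that the counit $\varepsilon$ lands in $\mathcal{A}$ and the triangle identity to invoke is $(F\varepsilon)\circ(\eta F)=\mathrm{id}_F$ rather than the other one.
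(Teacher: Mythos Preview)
Your proof is correct and takes essentially the same approach as the paper: both identify the composite $\Hom_{\mathcal{B}}(F(a),F(a')) \to \Hom_{\mathcal{A}}(GF(a),GF(a')) \simeq \Hom_{\mathcal{B}}(F(a),FGF(a'))$ with post-composition by the split monomorphism $\eta_{F(a')}\colon F(a')\to FGF(a')$, using the triangle identity $(F\varepsilon)\circ(\eta F)=\mathrm{id}_F$. Your version is simply more explicit, writing out the retraction $r$ and verifying $r\circ G=\mathrm{id}$ directly, whereas the paper compresses this into one line.
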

\begin{proof}
  By adjunction we have the composition
  \[
    \Hom_{\mathcal{B}}(F(a),F(a')) \to \Hom_{\mathcal{A}}(GF(a),GF(a')) \simeq \Hom_{\mathcal{B}}(F(a),FGF(a'))
  \]
  and the composition is a split injection induced by
  $F(a') \to FGF(a')$.
\end{proof}

By Lemma \ref{L:adjoint-split}, it suffices to prove that the induced map
\[
  T(x) \colon \Ext^1_{\Orb_X}(\Xi(x),\Xi(x)) \to \Ext^1_{\Orb_X}(GF(\Xi(x)), GF(\Xi(x)))
\]
is injective for all closed points $x$ of $X$. Since $F$ and $G$ are
Fourier--Mukai (Proposition \ref{P:adjoints-FM}), the composition $GF$
is isomorphic to $\Phi_Q$ for some kernel
$Q \in \DCAT^b_{\COH}(X\times_k X)$. Let $a$,
$b \colon X\times_k X \to X$ be the projections onto the first and
second factors, respectively. Let $y\colon \spec k \to X$ be a
closed point and form the following diagram:
\[
  \xymatrix@+1pc{\spec k \ar[d]_{{y}} & \ar[l] X \ar[d]^{(y,\mathrm{id})} &\\
  X & \ar[l]^-{a}  X \times_{k} X \ar[r]_-{b} & X. }
\]
Then
\begin{align*}
  \Phi_{Q}(\Xi({y})) &= \RDERF b_{\qcsubscript,*}(Q \tensor^{\LDERF}_{\Orb_{X \times_{k} X}} \LDERF a^*{y}_*\Orb_{\spec {k}})\\
                     &\simeq \RDERF b_{\qcsubscript,*}(Q \tensor^{\LDERF}_{\Orb_{X \times_{k} X}} \RDERF ({y},\mathrm{id})_{\qcsubscript,*}\Orb_{X})\\
                     &\simeq \RDERF b_{\qcsubscript,*}\RDERF ({y},\mathrm{id})_{\qcsubscript,*}\LDERF ({y},\mathrm{id})^*Q\\
                     &\simeq \LDERF (y,\mathrm{id})^*Q
\end{align*}
But
$\Phi_Q(\Xi(y))\simeq \oplus_{\zeta\in\hat{\mathcal{G}}_y}
Q(y,\zeta)^{\oplus m(y,\zeta)}$, which we have already seen is a
coherent sheaf supported only at $y$, so $\LDERF (y,\mathrm{id})^*Q$ is a coherent
sheaf. Now the following square is cartesian:
\[
  \xymatrix{X \ar[r]^-{(y,\mathrm{id})} \ar[d] & X \times_k X \ar[d]^{a} \\ \spec k \ar[r]_{y} & X}
\]
and $a$ is flat, so Lemma \ref{L:bridgeland-nakayama} implies that $Q$
is coherent sheaf on $X\times_k X$ that is flat over $X$ via $a$.

By Theorem \ref{T:anno-log}, the adjunction
$\Phi_Q \simeq GF \Rightarrow \mathrm{id}$ is induced by a morphism of
coherent $\Orb_{X\times_k X}$-modules
$\varepsilon \colon Q \to (\Delta_X)_*\Orb_X$. Also, $\LDERF (y,\mathrm{id})^*(\Delta_X)_*\Orb_X \simeq \Xi({y})$ and $(y,\mathrm{id})^*\varepsilon$ is equivalent to the adjunction
\[
\varepsilon({y}) \colon \Phi_{Q}(\Xi({y})) \to \Xi({y}).
\]
which we have already seen to be surjective (it is the direct sum of
the surjective maps $\varepsilon(y,\zeta)$). 
It follows immediately that $\varepsilon$ is surjective
and $C=\ker(Q \to (\Delta_X)_*\Orb_X)$ is a coherent
$\Orb_{X\times_k X}$-module, which is flat over $X$ via $a$. If
$\varepsilon$ is an isomorphism, then $T(x)$ is bijective for all $x$
and we're done. In fact, since $C$ is flat over $X$ and $X$ is
integral, it suffices to prove that $\LDERF (y,\mathrm{id})^*C = 0$
for a dense set of $y$ in $X$. In other words, we must prove that the adjunction  $\varepsilon(y)$ 
is an isomorphism for a dense set of $y$; equivalently, that $T(x)$ is
injective for a dense set of
$y$. We now proceed to arrange ourselves to be in the situation of Remark
\ref{R:KS}. 

By Proposition \ref{P:key}, the morphism
$X \xrightarrow{\underline{\Delta_*\Orb_X}} \underline{\COH}_{X/k}$ is
smooth. Let $W \subseteq \underline{\COH}_{X/k}$ be its image, which
is an open substack. Let $T \to W$ be a morphism, where $T$ is an
affine scheme; then there is a corresponding finitely presented
$\Orb_{T\times_k X}$-module $F$ that is flat over $T$. We claim that
$\Phi_{Q_T}(F)$ is a finitely presented $\Orb_{T\times_k X}$-module
that is flat over $T$. By Lemma \ref{L:fm-bc}, this is smooth-local on
$T$. In particular, since $X\times_{W} T \to T$ is a smooth
surjection, we may assume that $T \to W$ factors through $X \to
W$. Again by Lemma \ref{L:fm-bc}, we may thus assume that $T=X$ and
$F=\Delta_*\Orb_X$. In this case, however, we have
$Q_X \simeq \pi_{13}^*Q$ on
$(X\times_k X) \times_X (X\times_k X) \simeq X \times_k X \times_k X$
and $\Phi_{\pi_{13}^*Q}(\Delta_*\Orb_X) \simeq Q$, which we have
already seen is a coherent $\Orb_{X\times_S X}$-module that is flat
over $X$ via $p$. Hence, there is an induced morphism
\[
  \chi \colon W \to \underline{\COH}_{X/k} \colon (T \xrightarrow{\underline{F}} W) \mapsto
  (T \to W \to S, \Phi_{Q_T}(F))
\]
Since $W$ is of finite type over $k$ with affine diagonal and
$\underline{\COH}_{X/k}$ is locally of finite type with affine
diagonal, $\chi$ is of finite type with affine diagonal. It follows
immediately that the relative inertia of $\chi$ is affine and of
finite type.

We claim that $\chi$ is representable. To see this, we note that it
suffices to check this on finite type points over $k$. Since $k$ is an
algebraically closed field, finite type points are all closed
$k$-points. Let $x \colon \spec k \to X$ be a closed point of $X$;
then the induced point of $W$ corresponds to $\Xi({x})$. It remains to
prove that
$\Aut_{\Orb_{X}}(\Xi({x})) \to \Aut_{\Orb_{X}}(\Phi_{Q}(\Xi({x})))$ is
a monomorphism of group schemes over $\spec k$. Since $k$ is
algebraically closed and of characteristic $0$, it suffices to check
this induces an injection on
$k$-points. Now
\[
  \Aut_{\Orb_{X}}(\Xi({x}))(\spec k) = \Aut_{\Orb_{X}}(\Xi({x}))
  \subseteq \End_{\Orb_{X}}(\Xi({x})),
\]
so it remains to show
$\End_{\Orb_{X}}(\Xi({x}))
\subseteq \End_{\Orb_{X}}(\Phi_{Q}(\Xi({x})))$. Now we have already
seen that the Bondal--Orlov conditions imply that
\begin{align*}
  \Hom_{\Orb_X}(\Xi(x),\Xi(y))
  &\simeq \Hom_{\Orb_X}(F(\Xi(x)),F(\Xi(y))).
\end{align*}
By Lemma \ref{L:adjoint-split}, we
obtain the desired injectivity result.
Hence, $\chi$ is representable.

We now claim that $\chi$ is radiciel. Since
$\chi$ is representable and of finite type, it suffices to
prove that if $x_1$, $x_2$ are two $k$-points of $W$ and
$\chi(x_1) \simeq \chi(x_2)$ in $\underline{\COH}_{X/k}$, then
$x_1 \simeq x_2$ in $W$. We may assume that the $x_i$ lift to $X$. We have also
seen that $\chi(x_i)$ is equivalent to $\Phi_{Q}(\Xi({x}_i))$ on
$X$. But $\Phi_Q(\Xi({x}_i))$ is supported only at ${x}_i$, so
${x}_1 \simeq {x}_2$ in $X$. Hence,
$x_1 \simeq x_2$ in $W$ and we have the claim.

It now follows from Remark \ref{R:KS} and Proposition \ref{P:ks-functorial} that there is a dense set of closed points $x$ of $X$ such that the Kodaira--Spencer map:
\[
T(x) \colon   \Ext^1_{\Orb_{X}}(\Xi({x}),\Xi({x})) \to \Ext^1_{\Orb_{X}}(\Phi_{Q}(\Xi({x})),\Phi_{Q}(\Xi({x})))
\]
is injective, which completes the proof.  
\appendix
\section{Retracted covers}\label{A:retracted-covers}
\begin{proposition}\label{P:retract-cover}
  Let $X$ be a quasicompact algebraic stack with affine diagonal and
  the resolution property. Then $X$ is cohomologically affine if and
  only if there is a smooth surjection $p \colon \spec A \to X$ such
  that $\Orb_X \to p_{*}\Orb_{\spec A}$ admits a retraction.  
\end{proposition}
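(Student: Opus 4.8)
The plan is to treat the two implications separately. The ``if'' direction is formal: if $p\colon\spec A\to X$ is a smooth surjection admitting a retraction of $\eta\colon\Orb_X\to p_*\Orb_{\spec A}$, then $p$ is affine (a morphism from an affine scheme to a stack with affine diagonal is affine), so the projection formula gives a natural isomorphism $p_*p^*(-)\simeq(-)\tensor_{\Orb_X}p_*\Orb_{\spec A}$ on $\QCOH(X)$; tensoring the retraction with a variable sheaf produces a natural retraction $p_*p^*\Rightarrow\mathrm{id}$, so $\shfcoho^0(X,-)$ is a retract, among additive functors $\QCOH(X)\to\AB$, of the exact functor $\shfcoho^0(X,p_*p^*(-))\simeq\shfcoho^0(\spec A,p^*(-))$, and hence is exact.

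For ``only if'', I would first build the cover: by the Totaro--Gross Theorem $X\simeq[U/\GL_n]$ with $U$ quasi-affine, and the quotient $p\colon U\to X$ is a $\GL_n$-torsor, hence smooth, surjective and affine. As $p$ is affine, $\shfcoho^{>0}(U,-)\simeq\shfcoho^{>0}(X,p_*(-))$ vanishes on quasicoherent sheaves, so $U$ is cohomologically affine, hence (Serre's criterion) affine; write $U=\spec A$. It then remains to retract the monomorphism $\eta\colon\Orb_X\to p_*\Orb_{\spec A}$. The structural input here is that, under $\QCOH(X)=\{\GL_n\text{-equivariant }A\text{-modules}\}$, the sheaf $p_*\Orb_{\spec A}$ becomes $A\tensor_k\Orb(\GL_n)$ with the diagonal action, and $\Orb(\GL_n)$ is the union of a \emph{countable} directed family of finite-dimensional $\GL_n$-submodules $W_\lambda$ (filter by the degree of matrix coefficients); hence $p_*\Orb_{\spec A}=\varinjlim_\lambda\mathcal E_\lambda$ is a filtered union of the finite-rank vector bundles $\mathcal E_\lambda=\widetilde{A\tensor_k W_\lambda}$ on $X$, all of whose inclusions $\mathcal E_\lambda\hookrightarrow\mathcal E_\mu$ are locally split with locally free cokernel.

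With this in hand, I would construct the retraction in two moves. First, $\eta$ splits \'etale-locally on $X$: a smooth surjection has a section \'etale-locally on the target, and a section $s$ of a base change $p_V$ retracts $\eta|_V$ via $(p_V)_*(s^\sharp)$. Since $\Orb_X$ is finitely presented, $\eta$ factors as $\Orb_X\xrightarrow{\eta_0}\mathcal E_{\lambda_0}\hookrightarrow p_*\Orb_{\spec A}$ with $\eta_0$ a locally split monomorphism of vector bundles; then $\coker\eta_0$ is a vector bundle and the class of $0\to\Orb_X\to\mathcal E_{\lambda_0}\to\coker\eta_0\to0$ in $\Ext^1_{\Orb_X}(\coker\eta_0,\Orb_X)=\shfcoho^1(X,(\coker\eta_0)^\vee)$ vanishes by cohomological affineness, giving a retraction $r_{\lambda_0}$ of $\eta_0$. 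Second, the same vanishing applied to $0\to\mathcal E_{\lambda_0}\to\mathcal E_\mu\to\mathcal E_\mu/\mathcal E_{\lambda_0}\to0$ makes each restriction $\Hom_{\Orb_X}(\mathcal E_\mu,\Orb_X)\to\Hom_{\Orb_X}(\mathcal E_{\lambda_0},\Orb_X)$ surjective, so---the index set being countable with surjective transition maps, so $\varprojlim^1=0$---$r_{\lambda_0}$ extends to a compatible system $(r_\mu)_\mu$, i.e.\ to $\tilde r\colon p_*\Orb_{\spec A}\to\Orb_X$ with $\tilde r\circ\eta=r_{\lambda_0}\circ\eta_0=\mathrm{id}$.

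I expect the main obstacle to be exactly this local-to-global passage: the unit splits automatically on any \'etale chart over which $p$ acquires a section, and the whole point is that cohomological affineness forces the gluing obstruction---which, after cutting $p_*\Orb_{\spec A}$ down to a finite-rank approximant, lives in $\shfcoho^1$ of a vector bundle---to vanish. The one routine-but-necessary check is that the approximants $\mathcal E_\lambda$ and all their transition maps are genuinely locally split, so that every cokernel encountered in the bootstrap is locally free.
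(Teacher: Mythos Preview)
Your proof is correct and follows the same overall strategy as the paper: both directions match, and for ``only if'' both arguments pass through Totaro--Gross, filter $p_*\Orb_{\spec A}$ by finite-rank subbundles coming from the degree filtration on the coordinate ring of $\GL_n$, and use a Mittag--Leffler/Milnor $\varprojlim^1$ argument to globalize the retraction.

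A few small differences worth noting. The paper works over $\Z$ throughout (Totaro--Gross gives $X\simeq[\spec A/\GL_{n,\Z}]$ directly), whereas you introduce a base field $k$; your filtration argument still goes through over $\Z$, but the ``finite-dimensional $W_\lambda$'' should be ``finite free $\Z$-modules,'' and one has to twist by $\det^{-m}$ to get genuine $\GL_n$-submodules of the localization $\Z[x_{ij}]_{\det}$---this is the ``routine-but-necessary check'' you flag, and the paper makes it explicit by setting $V_m=\det^{-m}W_m$. The paper also replaces your \'etale-local-section argument for the local splitting of $\eta_0$ with the more direct observation that the constant-term projection splits $\Orb\hookrightarrow V_m$ already over $\spec\Z$, so the quotient $Q_m$ is visibly a vector bundle on $B\GL_{n,\Z}$ before pulling back. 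Finally, rather than first splitting at a finite stage and then extending, the paper packages the whole thing as a single $\Ext^1(\varinjlim\tilde Q_m,\Orb_X)=0$ computation via the Milnor sequence; your two-step version is equivalent.
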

\begin{proof}
  For the necessity: let $p$ be such a cover and let $M \in
  \QCOH(X)$. Then
  $M \to p_{\qcsubscript,*}p^*M \simeq p_{\qcsubscript,*}\Orb_{\spec A} \tensor_{\Orb_X} M$ admits a
  retraction. In particular, $H^i(X,M)$ is a direct summand of
  $H^i(X,p_{\qcsubscript,*}p^*M) \simeq H^i(\spec A,p^*M) = 0$, whenever
  $i>0$. Hence, $X$ is cohomologically affine. For the sufficiency:
  the Totaro--Gross Theorem \cite{MR2108211,2013arXiv1306.5418G} shows
  that $X\simeq [\spec A/\GL_{n,\Z}]$ for some $n$ and ring $A$. Let
  $p \colon \spec A \to X$ be the induced covering, which we claim has
  the desired property. Indeed, let
  $q \colon \spec \Z \to B\GL_{n,\Z}$ be the usual covering. Observe
  that $q_{\qcsubscript,*}\Orb_{\spec \Z} = \cup_{m\geq n} V_m$, where
  $V_m = \det^{-m}W_m \subseteq
  \Z[\GL_{n,\Z}]=\Z[\{x_{ij}\}_{i,j=1}^n]_{\det}$ and
  \[
    W_m = \{ f \in \Z[\{x_{ij}\}_{i,j=1}^n] \suchthat \deg f \leq m\}.
  \]
  Then the $V_m$ are finite rank vector bundles on $B\GL_{n,\Z}$, as
  are the quotients $V_{m+1}/V_m$. Now
  $\Orb_{B\GL_{n,\Z}} \subseteq V_m$ and
  $\Orb_{\spec \Z} \subseteq p^*V_m$ splits via picking off the
  constant term; in particular, $Q_m = V_m/\Orb_{B\GL_{n,\Z}}$ is a
  finite rank vector bundle for all $m$. Similarly, the quotients
  $Q_{m+1}/Q_m \simeq V_{m+1}/V_m$ are finite rank vector bundles on
  $B\GL_{n,\Z}$ for all $m$. Let $a\colon X \to B\GL_{n,\Z}$ be the
  induced morphism. Let $\tilde{Q}_m = a^*Q_m$; then there is an exact
  sequence
  \[
    0 \to \Orb_X \to p_{\qcsubscript,*}\Orb_{\spec A} \to \cup_m \tilde{Q}_m \to 0.
  \]
  It suffices to prove that $\Ext^1_{\Orb_X}(\tilde{Q},\Orb_X) = 0$,
  where $\tilde{Q} = \cup_m \tilde{Q}_m$, when $X$ is cohomologically
  affine. To this end, we note that $\tilde{Q} \simeq \mathrm{hocolim}_m \tilde{Q}_m$ and so
  \[
    \Ext^1_{\Orb_X}(\tilde{Q},\Orb_X) = \COHO{1}(\mathrm{holim}_m\RHom_{\Orb_X}(\tilde{Q}_m,\Orb_X)).
  \]
  We have the Milnor exact sequence, however:
  \[
    0 \to {\varprojlim_m}^1 \Hom_{\Orb_X}(\tilde{Q}_m,\Orb_X) \to
    \Ext^1_{\Orb_X}(\tilde{Q},\Orb_X) \to \varprojlim_m
    \Ext^1_{\Orb_X}(\tilde{Q}_m,\Orb_X) \to 0.
  \]
  But
  \[
    \varprojlim_m
    \Ext^1_{\Orb_X}(\tilde{Q}_m,\Orb_X) \simeq \varprojlim_m
    H^1(X,\tilde{Q}_m^\vee) = 0,
  \]
  and
  \[
    H^0(X,\tilde{Q}_{m+1}^\vee) =
    \Hom_{\Orb_X}(\tilde{Q}_{m+1},\Orb_X) \to
    \Hom_{\Orb_X}(\tilde{Q}_m,\Orb_X) = H^0(X,\tilde{Q}_m^\vee)
  \]
  is surjective because $H^0(X,-)$ is exact on quasicoherent sheaves
  and $\tilde{Q}_{m+1}^\vee \to \tilde{Q}_m^{\vee}$ is surjective
  because $\tilde{Q}_{m+1}/\tilde{Q}_m$ is a finite rank vector
  bundle. Hence, we have the desired vanishing.
\end{proof}
\begin{corollary}\label{C:loc-proj-inj-glob}
  Let $X$ be a cohomologically affine, quasicompact algebraic stack
  with affine diagonal and the resolution property. Then locally
  projective quasicoherent $\Orb_X$-modules are projective. If $X$ is
  noetherian, then locally injective quasicoherent $\Orb_X$-modules
  are injective.
\end{corollary}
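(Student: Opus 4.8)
The plan is to reduce both statements to the cover furnished by Proposition~\ref{P:retract-cover}: since $X$ is cohomologically affine with affine diagonal and the resolution property, there is a smooth (hence flat) surjection $p\colon\spec A\to X$, necessarily affine as $X$ has affine diagonal, such that the unit $\Orb_X\to B:=p_*\Orb_{\spec A}$ admits an $\Orb_X$-linear retraction $r$; when $X$ is noetherian, $A$ may be taken noetherian as well. Because $p$ is affine, $p_*$ is exact and the projection formula $p_*(\mathcal F\tensor_{\Orb_{\spec A}}p^*\mathcal G)\simeq p_*\mathcal F\tensor_{\Orb_X}\mathcal G$ holds for quasicoherent sheaves; in particular $p_*p^*M\simeq B\tensor_{\Orb_X}M$ naturally in $M\in\QCOH(X)$, and, iterating, $B^{\tensor(n+1)}\tensor_{\Orb_X}M\simeq(p_*p^*)^{n+1}M$ for all $n\geq 0$ (all tensor products over $\Orb_X$). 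The only inputs I will use are this adjunction with $p_*$ exact, and the splitting $r$.

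Consider the injective case, so $X$ is noetherian and $I\in\QCOH(X)$ is locally injective; thus $p^*I$ is an injective $\Orb_{\spec A}$-module. For $M\in\QCOH(X)$, adjunction gives $\Hom_{\Orb_X}(M,p_*p^*I)\simeq\Hom_{\Orb_{\spec A}}(p^*M,p^*I)$; since $p^*$ is exact and $\Hom_{\Orb_{\spec A}}(-,p^*I)$ is exact, $\Hom_{\Orb_X}(-,p_*p^*I)$ is exact, i.e.\ $p_*p^*I$ is injective in $\QCOH(X)$. Finally $I$ is a direct summand of $p_*p^*I\simeq B\tensor_{\Orb_X}I$: the unit $I\to p_*p^*I$ is obtained from $\Orb_X\to B$ by applying $-\tensor_{\Orb_X}I$, hence is split by $r\tensor\mathrm{id}_I$. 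A direct summand of an injective object is injective, so $I$ is injective.

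The projective case is slightly less symmetric, because $p_*$ has no left adjoint in general. Start from the Amitsur (\v{C}ech) complex of the $\Orb_X$-algebra $B$,
\[
  0\to\Orb_X\to B\to B^{\tensor 2}\to B^{\tensor 3}\to\cdots,
\]
which is contractible because $\Orb_X\to B$ is $\Orb_X$-linearly split, with contracting homotopy $b_0\tensor\cdots\tensor b_n\mapsto r(b_0)\,b_1\tensor\cdots\tensor b_n$. Tensoring with $M$ and using $B^{\tensor(n+1)}\tensor_{\Orb_X}M\simeq(p_*p^*)^{n+1}M$ gives a contractible complex $0\to M\to p_*p^*M\to(p_*p^*)^2M\to\cdots$, natural in $M$. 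For $n\geq 1$ we have $(p_*p^*)^nM=p_*\bigl((p^*p_*)^{n-1}p^*M\bigr)$, so $\Hom_{\Orb_X}(P,(p_*p^*)^nM)\simeq\Hom_{\Orb_{\spec A}}(p^*P,(p^*p_*)^{n-1}p^*M)$; as $P$ is locally projective, $p^*P$ is a projective $\Orb_{\spec A}$-module, and $(p^*p_*)^{n-1}p^*$ is exact (a composite of the exact functors $p^*$ and $p_*$), so $M\mapsto\Hom_{\Orb_X}(P,(p_*p^*)^nM)$ is exact. Applying $\Hom_{\Orb_X}(P,-)$ to the contractible complex above yields, for each $M$, an exact complex
\[
  0\to\Hom_{\Orb_X}(P,M)\to\Hom_{\Orb_X}(P,p_*p^*M)\to\Hom_{\Orb_X}(P,(p_*p^*)^2M)\to\cdots.
\]
Write $C^{\bullet}(M)$ for this complex with its $\Hom_{\Orb_X}(P,M)$ term deleted, placed in cohomological degrees $0,1,2,\dots$; then, from exactness of the displayed complex, $\COHO{0}(C^{\bullet}(M))\simeq\Hom_{\Orb_X}(P,M)$ and $\COHO{i}(C^{\bullet}(M))=0$ for $i\geq 1$. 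Given a short exact sequence $0\to K\to M\to N\to 0$ in $\QCOH(X)$, the sequence $0\to C^{\bullet}(K)\to C^{\bullet}(M)\to C^{\bullet}(N)\to 0$ is termwise short exact by the exactness of $M\mapsto\Hom_{\Orb_X}(P,(p_*p^*)^nM)$ for $n\geq 1$, so its long exact cohomology sequence gives $\Hom_{\Orb_X}(P,M)\to\Hom_{\Orb_X}(P,N)\to\COHO{1}(C^{\bullet}(K))=0$. Hence $\Hom_{\Orb_X}(P,-)$ is right exact, and being left exact it is exact; so $P$ is projective.

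I expect the main obstacle to be purely the bookkeeping in the projective case: checking the iterated–projection-formula identification $B^{\tensor(n+1)}\tensor_{\Orb_X}M\simeq(p_*p^*)^{n+1}M$ and its compatibility with the Amitsur differentials, and confirming that the contracting homotopy is a homotopy of complexes of $\Orb_X$-modules so that it survives $\Hom_{\Orb_X}(P,-)$. Beyond that, everything is formal from the $(p^*,p_*)$-adjunction, exactness of $p_*$ and $p^*$, and Proposition~\ref{P:retract-cover}.
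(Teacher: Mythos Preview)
Your reduction to the cover $p\colon\spec A\to X$ of Proposition~\ref{P:retract-cover} is the right move, and your injective argument is essentially the paper's (the paper phrases it as ``$\Ext^i_{\Orb_X}(M,N)$ is a direct summand of $\Ext^i_{\Orb_X}(M,p_*p^*N)\simeq\Ext^i_{\Orb_{\spec A}}(p^*M,p^*N)$'', but this is your splitting argument in $\Ext$ language).  There is, however, a genuine gap in both halves: you assert ``thus $p^*I$ is an injective $\Orb_{\spec A}$-module'' and ``as $P$ is locally projective, $p^*P$ is a projective $\Orb_{\spec A}$-module'' as if these were immediate.  They are not.  The hypothesis ``locally projective/injective'' means there is \emph{some} smooth cover on which the pullback is projective/injective; there is no reason this should be the particular cover $p$ furnished by Proposition~\ref{P:retract-cover}.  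What you actually need is that the corollary already holds for $X=\spec A$ affine, so that locally projective (resp.\ locally injective) modules over a ring (resp.\ a noetherian ring) are projective (resp.\ injective).  The paper supplies exactly this: for projectives it invokes Raynaud--Gruson \cite{MR0308104} (fpqc descent of projectivity), and for injectives it gives a short argument via Baer's criterion and flat base change of $\Hom$ from a finitely presented module.  Without these inputs your ``thus'' is unjustified, and the gap is not mere bookkeeping.

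A second, minor point: your Amitsur-complex maneuver in the projective case is correct but unnecessary.  The same retraction trick you used for injectives works symmetrically.  Since $M\to p_*p^*M$ is split by $r\otimes\mathrm{id}_M$ \emph{naturally in $M$}, the functor $M\mapsto\Hom_{\Orb_X}(P,M)$ is a direct summand of $M\mapsto\Hom_{\Orb_X}(P,p_*p^*M)\simeq\Hom_{\Orb_{\spec A}}(p^*P,p^*M)$, and the latter is exact once you know $p^*P$ is projective; a natural retract of an exact functor is exact.  This is exactly the paper's uniform argument (applied to $N$ rather than $M$, but it makes no difference), and it avoids the iterated projection-formula and homotopy bookkeeping you flagged as the main obstacle.
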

\begin{proof}
  Let $M \in \QCOH(X)$ be locally projective. It suffices to prove
  that $\Ext^i_{\Orb_X}(M,N) = 0$ for all $N\in \QCOH(X)$ and
  $i>0$. Similarly, if $N\in \QCOH(X)$ is locally injective, then it
  suffices to prove that $\Ext^i_{\Orb_X}(M,N) = 0$ for all
  $M \in \QCOH(X)$ and $i>0$. Let $p \colon \spec A \to X$ be a smooth
  covering as in Proposition \ref{P:retract-cover}; then
  $N \to p_{\qcsubscript,*}p^*N \simeq p_{\qcsubscript,*}\Orb_{\spec A} \tensor_{\Orb_X} N$ admits a
  retract. In particular, $\Ext^i_{\Orb_X}(M,N)$ is a direct summand
  of
  $\Ext^i_{\Orb_X}(M,p_{\qcsubscript,*}p^*N) \simeq \Ext^i_{\Orb_{\spec
      A}}(p^*M,p^*N)$. Hence, we are reduced to the situation where
  $X=\spec A$ is an affine scheme. If $M$ is locally projective over
  $\spec A$, then it is projective \cite{MR0308104}. If $N$ is locally
  injective over $\spec A$, then there is a faithfully flat covering
  $q \colon \spec A' \to \spec A$ such that $q^*N$ is injective. By
  Baer's Criterion \cite[Tag \spref{05NU}]{stacks-project}, it
  suffices to prove that
  $\Hom_{\Orb_X}(\Orb_X,N) \to \Hom_{\Orb_X}(I,N)$ is surjective for
  all coherent ideals $I \subseteq \Orb_X$. Since $A \to A'$ is
  faithfully flat, it suffices to prove that
  $\Hom_{\Orb_X}(\Orb_X,N)\tensor_A A' \to \Hom_{\Orb_X}(I,N)\tensor_A
  A'$ is surjective.  But $\Orb_X$ and $I$ are finitely presented
  $\Orb_X$-modules (the latter because $X$ is noetherian), so the
  formation of $\Hom$ commutes with base change. Hence, it remains to
  prove that
  $\Hom_{\Orb_{\spec A'}}(\Orb_{\spec A'},q^*N) \to \Hom_{\Orb_{\spec
      A'}}(q^*I,q^*N)$ is surjective, which follows from the
  injectivity of $q^*N$.
\end{proof}
\section{Strong generators for tame stacks}\label{A:strong-gens}
For an efficient introduction to strong generators and descendable
morphisms, we refer the reader to \cite{aoki2020quasiexcellence}. Also
see \cite{MR2434186}. Our main result in this appendix is the
following theorem, which generalizes the main result of
\cite{aoki2020quasiexcellence} from schemes to tame algebraic stacks.
\begin{theorem}\label{T:strong-generator-tame}
  If $X$ is a noetherian, separated, quasiexcellent tame
  algebraic stack of Krull finite dimension, then
  $\DCAT^b_{\COH}(X)$ has a strong generator.
\end{theorem}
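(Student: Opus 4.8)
The plan is to reduce to the case of schemes---where the statement is the main theorem of \cite{aoki2020quasiexcellence} (see also \cite{Neeman_Approx})---by transferring a strong generator along a \emph{descendable} finite flat cover by a scheme, and then to globalize by noetherian induction on $|X|$.

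The first step is to isolate the following transfer lemma. Let $p\colon W\to \mathcal X$ be finite, faithfully flat, and of finite presentation, with $\mathcal X$ noetherian. Then $p$ is descendable in the sense of \cite{MR3674218} of some finite index $n$; concretely, $\Orb_{\mathcal X}\in \thick{\RDERF p_{\qcsubscript,*}\Orb_W}_n$, where $\thick{-}_j$ denotes the objects built from $-$ by retracts, shifts, finite direct sums, and at most $j-1$ cones. If in addition $\DCAT^b_{\COH}(W)=\thick{G}_m$ for some object $G$, then $\DCAT^b_{\COH}(\mathcal X)=\thick{\RDERF p_{\qcsubscript,*}G}_{mn}$. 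Indeed, for $M\in \DCAT^b_{\COH}(\mathcal X)$, flatness and finiteness of $p$ give $\LDERF p^*M\in \DCAT^b_{\COH}(W)=\thick{G}_m$, hence $\RDERF p_{\qcsubscript,*}\LDERF p^*M\in \thick{\RDERF p_{\qcsubscript,*}G}_m$ since $\RDERF p_{\qcsubscript,*}$ is triangulated and, by finiteness, preserves $\DCAT^b_{\COH}$; on the other hand, applying $-\tensor^{\LDERF}_{\Orb_{\mathcal X}}M$ to a witness of $\Orb_{\mathcal X}\in \thick{\RDERF p_{\qcsubscript,*}\Orb_W}_n$ and using the projection formula $\RDERF p_{\qcsubscript,*}\Orb_W\tensor^{\LDERF}_{\Orb_{\mathcal X}}M\simeq \RDERF p_{\qcsubscript,*}\LDERF p^*M$ exhibits $M$ as a retract of an object of $\thick{\RDERF p_{\qcsubscript,*}\LDERF p^*M}_n$, so $M\in \thick{\RDERF p_{\qcsubscript,*}G}_{mn}$.

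The second step produces such covers generically. At a generic point of $X$ the residual gerbe is a gerbe over a field banded by a finite linearly reductive group scheme; any such gerbe has a point over a finite extension of its base field, and descent along that point presents it as the quotient of an affine scheme by a finite flat groupoid, so it admits a finite, faithfully flat cover by a scheme. Spreading this out and shrinking (standard limit methods, as used already in the proof of Theorem \ref{T:boundedness-stacks}), there is a dense open substack $U\subseteq X$ and a finite, faithfully flat, finitely presented morphism $p_U\colon W_U\to U$ with $W_U$ a separated scheme that is noetherian, of finite Krull dimension, and quasiexcellent (being finite over the étale presentations of the quasiexcellent stack $U$). By \cite{aoki2020quasiexcellence}, $\DCAT^b_{\COH}(W_U)$ has a strong generator, so the transfer lemma gives one for $\DCAT^b_{\COH}(U)$.

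The last step is noetherian induction on $|X|$: the reduced complement $Z=X\setminus U$ is again a noetherian, separated, quasiexcellent tame stack of strictly smaller dimension, so $\DCAT^b_{\COH}(Z)$ has a strong generator by induction, and one glues it to the generator of $\DCAT^b_{\COH}(U)$ to obtain one for $\DCAT^b_{\COH}(X)$ along the localization sequence relating $\DCAT^b_{\COH}(U)$, $\DCAT^b_{\COH}(X)$, and the complexes in $\DCAT^b_{\COH}(X)$ supported on $Z$, exactly as in Neeman's treatment of quasiexcellent schemes \cite{Neeman_Approx}. I expect this final gluing to be the main obstacle: one must keep the amplitude of the generator uniform through the induction, and verify that Neeman's Mayer--Vietoris and approximability arguments for $\DCAT^b_{\COH}$ transfer verbatim from schemes to tame stacks once a descendable scheme cover is available. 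This is where the hypotheses are used: quasiexcellence and finite Krull dimension feed the scheme case and the inductive gluing, while tameness is what makes $\DQCOH(X)$ approximable (through finite cohomological dimension) and what guarantees the finite flat covers of residual gerbes in the second step.
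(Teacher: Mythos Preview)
Your approach is plausible but takes a substantially harder path than the paper, and the step you flag as the main obstacle is exactly the one the paper sidesteps.

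The paper's proof is much shorter. By \cite[Thm.~B]{rydh-2009} there is a \emph{global} finite surjective morphism $f\colon W\to X$ with $W$ a scheme; no spreading out from residual gerbes is needed. Since $W$ inherits the hypotheses, $\DCAT^b_{\COH}(W)$ has a strong generator by \cite{aoki2020quasiexcellence}. The entire content is then Lemma~\ref{L:descendable-finite}: any finite, finitely presented, \emph{surjective} (not necessarily flat) morphism to a tame stack is descendable. Given this, \cite[Cor.~4.5]{aoki2020quasiexcellence} transfers the strong generator from $W$ to $X$ in one step, with no gluing.

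The noetherian induction you want to run on $|X|$ at the level of strong generators is instead run inside the proof of Lemma~\ref{L:descendable-finite}, at the level of descendability. This is much more tractable: one only needs to show that a single connecting map $\delta$ becomes zero after sufficiently many tensor powers, and the inductive step is a concrete manipulation with the ideal $J$ cutting out the complement of a generically flat locus. Tameness enters to split $\Orb_U\to (f_U)_{\qcsubscript,*}\Orb_{f^{-1}(U)}$ over a cohomologically affine open $U$, which forces $\delta|_U=0$; one then factors $\delta$ through $\Orb_X/J$ and applies the inductive hypothesis on $V(J)$.

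Your insistence on a \emph{flat} cover is what forces you into the gluing: Rydh's cover is only finite surjective, and you need flatness for your easy transfer lemma. The paper's key observation is precisely that flatness is unnecessary once the target is tame. If you wanted to salvage your route, you would indeed need to port Neeman's approximability-based gluing to tame stacks; this is likely feasible but is real work, whereas the paper's argument avoids it entirely.
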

\begin{proof}
  By \cite[Thm.~B]{rydh-2009}, there exists a finite and surjective
  morphism $f \colon W \to X$, where $W$ is a scheme. It follows that
  $W$ is a noetherian, separated, quasiexcellent scheme of finite
  Krull dimension. By \cite[Cor.~4.5]{aoki2020quasiexcellence}, it
  suffices to prove that the morphism $f$ is descendable, in the sense
  of \cite{MR3459022}. This is just Lemma \ref{L:descendable-finite}
  and the result follows.
\end{proof}
\begin{lemma}\label{L:descendable-finite}
  Let $X$ be a quasicompact and quasiseparated and tame algebraic stack. If
  $f\colon W \to X$ is a finite, finitely presented, and surjective morphism, then $f$ is
  descendable.
\end{lemma}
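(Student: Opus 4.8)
The plan is to verify Mathew's reformulation of descendability \cite{MR3459022}: that $\Orb_X$ lies in the thick tensor-ideal of $\DQCOH(X)$ generated by $\RDERF f_{\qcsubscript,*}\Orb_W$. I would use three formal properties of descendable morphisms, all available for (noetherian) algebraic stacks (see \cite{MR3459022,MR3674218,hall2022remarks}): descendability is stable under arbitrary base change and under composition, and, since descendability of $f$ only involves finitely presented data, proving it for $f$ reduces by noetherian approximation \cite[App.~B]{rydh-2009} to the case where $X$ is noetherian (and still tame). The one substantive, non-formal input is: $(\ast)$ every finite, finitely presented, surjective morphism of noetherian schemes is descendable. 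Granting $(\ast)$, the argument is then short. Tame stacks are concentrated and of finite cohomological dimension, so for a smooth cover $p\colon \spec A \to X$ with $A$ noetherian, $\RDERF p_{\qcsubscript,*}\Orb_{\spec A}$ is a descendable $\Orb_X$-algebra by \cite[Thm.~7.1]{hall2022remarks}; that is, $p$ is descendable. The base change $\tilde f\colon \tilde W := W\times_X\spec A \to \spec A$ is finite, so $\tilde W$ is an affine scheme, and $\tilde f$ is finite, finitely presented and surjective, hence descendable by $(\ast)$. As composition preserves descendability, $p\circ\tilde f\colon \tilde W \to X$ is descendable, so $\Orb_X$ lies in the thick tensor-ideal generated by $\RDERF(p\circ\tilde f)_{\qcsubscript,*}\Orb_{\tilde W}$. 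Since $p$ is flat, the projection formula and tor-independent base change \cite[\S4]{perfect_complexes_stacks} give
\[
  \RDERF(p\circ\tilde f)_{\qcsubscript,*}\Orb_{\tilde W}\;\simeq\;\RDERF f_{\qcsubscript,*}\Orb_W\tensor^{\LDERF}_{\Orb_X}\RDERF p_{\qcsubscript,*}\Orb_{\spec A},
\]
which visibly belongs to the thick tensor-ideal generated by $\RDERF f_{\qcsubscript,*}\Orb_W$; hence so does $\Orb_X$, and $f$ is descendable. Note that tameness is used only to ensure $X$ is concentrated.

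The main obstacle is $(\ast)$, the scheme-theoretic case; it is the geometric core of the circle of ideas in \cite{aoki2020quasiexcellence,Neeman_Approx} and can be cited from there, but it admits a direct proof by noetherian induction on the target. Killing the (necessarily nilpotent) kernel of $\Orb\to\RDERF f_{\qcsubscript,*}\Orb_W$ — a nilpotent thickening, descendable by an elementary filtration argument — reduces one to an affine target $\spec R$ with $R$ reduced and injecting into a finite $R$-algebra. By generic flatness the algebra is finite locally free over a dense open $U\subseteq\spec R$, over which the morphism is descendable: on an affine scheme the augmentation fibre $\mathrm{fib}(\Orb\to\RDERF f_{\qcsubscript,*}\Orb_W)$ is a one-fold shift of a vector bundle, whose iterated tensor powers admit no nonzero map to $\Orb$. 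Over the complementary closed subscheme the morphism is descendable by the inductive hypothesis. Recombining the open stratum and its closed complement is the delicate step — they do not overlap, so naive Zariski/Mayer–Vietoris patching fails — and is exactly where one must invoke a finer decomposition (a conductor-square / closed Mayer–Vietoris argument, as in the crossing-curves and cuspidal examples, or the flattening of Raynaud–Gruson).

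All remaining ingredients — noetherian approximation of the stacky data, stability of descendability under base change and composition for morphisms of algebraic stacks, and the vanishing $\shfcoho^n(\spec R,-)=0$ for $n>0$ used above — are routine and are contained in \cite{MR3459022,hall2022remarks}. I expect the write-up to be short: modulo $(\ast)$, which is the genuine content and is essentially in the cited literature, the passage from noetherian schemes to tame algebraic stacks is entirely formal, which is presumably why the statement can be said to follow "relatively easily" from the work of Neeman, Aoki, and Mathew.
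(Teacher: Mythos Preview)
Your reduction to the scheme case is correct and takes a genuinely different route from the paper. The paper runs the noetherian induction directly on the tame stack $X$: after reducing to $X$ reduced, generic flatness and the tame moduli space yield a cohomologically affine dense open $U\subseteq X$ over which $\Orb_X\to f_{\qcsubscript,*}\Orb_W$ splits; the recombination with the complement is then done concretely via the formula $\RDERF j_{\qcsubscript,*}\Orb_U\simeq\hocolim{n}\SRHom_{\Orb_X}(J^n,\Orb_X)$, which shows the boundary $\delta\colon Q[-1]\to\Orb_X$ factors through $Q[-1]\otimes^{\LDERF}_{\Orb_X}\Orb_X/J$, after which the inductive hypothesis on $V(J)$ and nilpotence of $J'/J$ finish. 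Your approach cleanly separates the formal stacky reduction (via the descendable smooth cover of \cite{hall2022remarks} and the tensor-ideal trick) from the scheme-theoretic input $(\ast)$, and your observation that tameness enters only through concentratedness is a genuine clarification. The weakness is that $(\ast)$ is not as citable as you suggest---\cite{aoki2020quasiexcellence,Neeman_Approx} treat alterations of quasiexcellent schemes, not bare finite surjections of arbitrary noetherian schemes---and your sketch of $(\ast)$ stalls exactly at the recombining step you flag as delicate. The conductor-square gesture is not a proof; what actually works is the paper's hocolim argument, which for schemes simplifies (replace ``cohomologically affine'' by ``affine open''). So your reduction is valid, but supplying $(\ast)$ still requires essentially the paper's argument specialized to schemes, making the modularity gain largely organizational rather than a genuine shortcut.
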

\begin{proof}
  By absolute noetherian approximation \cite[Thm.~D]{rydh-2009}, we
  may assume that $X$ is noetherian.  We now prove the result by
  noetherian induction on the closed substacks of $X$; that is, we
  will assume that for every closed immersion $Z \subsetneq X$ the
  restriction $f^{-1}(Z) \to Z$ is descendable. Let
  $I \subseteq \Orb_X$ be the nilradical; then
  $X_{\mathrm{red}} \to X$ is descendable as $I$ is nilpotent. As the
  composition of descendable morphisms is descendable, we may further
  assume that $X$ is reduced. By generic flatness, there is a dense
  open $U \subseteq X$ such that $f^{-1}(U) \to U$ is flat. Let
  $U \to U_{\mathrm{tame}}$ be the tame moduli space of $U$; then
  passing to a dense affine open of $U_{\mathrm{tame}}$, we may
  further shrink $U$ so that $U$ and $f^{-1}(U)$ are cohomologically
  affine. Then the morphism $\Orb_U \to (f_U)_{\qcsubscript,*}\Orb_{f^{-1}(U)}$ is
  split: $f^{-1}(U) \to U$ is finite and faithfully flat, so
  $Q_U = \coker(\Orb_U \to (f_U)_{\qcsubscript,*}(\Orb_{f^{-1}(U)})$ is a vector
  bundle of finite rank on $U$ and
  $\Ext^1_{\Orb_U}(Q_U,\Orb_U) = H^1(U,Q_U^\vee) = 0$. In particular,
  in the distinguished triangle
  \[
    Q[-1] \xrightarrow{\delta} \Orb_X \to f_{\qcsubscript,*}\Orb_W \to Q,
  \]
  we see that $\delta_U = 0$. That is,
  $Q[-1] \xrightarrow{\delta} \Orb_X \to j_{\qcsubscript,*}\Orb_U$ is $0$. Note that
  $j\colon U \to X$ is affine because $X$ has affine diagonal and $U$
  is cohomologically affine. By \cite[Thm.~1.3.8]{MR3014449},
  $j_{\qcsubscript,*}\Orb_U \simeq \RDERF j_{\qcsubscript,*}\Orb_U \simeq \hocolim{n}
  \SRHom_{\Orb_X}(J^n,\Orb_X)$, where $J \subseteq \Orb_X$ is a
  coherent ideal such that $X-V(J) = U$. Since $Q \in \COH(X)$ and
  $\SRHom_{\Orb_X}(J^n,\Orb_X) \in \DQCOH^{\geq 0}(X)$, it follows
  from \cite[Lem.~1.2(3)]{perfect_complexes_stacks} that we can
  arrange that $Q[-1] \to \Orb_X \to \SRHom_{\Orb_X}(J^n,\Orb_X)$ is
  $0$. We may now replace $J$ by $J^n$ and by adjunction, the
  composition $Q[-1] \otimes^{\LDERF} J \to Q[-1] \to \Orb_X$ is
  $0$. In particular, $\delta$ factors through
  $\bar{\delta} \colon Q[-1] \otimes^{\LDERF}_{\Orb_X} \Orb_X/J \to
  \Orb_X$. Now form the short exact sequence:
  \[
    0 \to \Orb_X/J' \to f_{\qcsubscript,*}(\Orb_W/J\Orb_W) \to Q' \to 0.
  \]
  There is consequently a morphism of triangles:
  \[
    \xymatrix{Q[-1]\otimes^{\LDERF}_{\Orb_X} \Orb_X/J\ar[d] \ar[r]^-{\bar{\delta}} & O_X/J \ar[r] \ar[d] & (f_{\qcsubscript,*}\Orb_W)\otimes^{\LDERF}_{\Orb_X} \Orb_X/J \ar[r] \ar[d] & Q\otimes^{\LDERF}_{\Orb_X} \Orb_X/J \ar[d]\\
      Q'[-1] \ar[r]^{\delta'} & \Orb_X/J' \ar[r] & f_{\qcsubscript,*}(\Orb_W/J\Orb_W)
      \ar[r] & Q',}
  \]
  By noetherian induction, $\delta'^{\otimes r} = 0$ for some
  $r \gg 0$. It follows that there is a factorization of
  $\bar{\delta}^{\otimes r} \colon (Q[-1]\otimes^{\LDERF}_{\Orb_X}
  \Orb_X/J)^{\otimes r} \to \Orb_X/J$ through
  $J'/J \subseteq \Orb_X/J$. Since $f$ is finite and surjective,
  $J'/J$ is a nilpotent ideal in $\Orb_X/J$. Hence, there is
  an $s$ such that $(J'/J)^{\otimes s} \to \Orb_X/J$ is the $0$
  map. Putting this together we see that $\bar{\delta}^{\otimes rs}$
  is $0$ and so $\delta^{\otimes rs}$ is too, which completes the proof.
\end{proof}
\begin{corollary}\label{C:adjoints-tame}
  Let $R$ be a quasiexcellent ring of finite Krull dimension. Let
  $p \colon X \to \spec R$ be a proper morphism of tame algebraic
  stacks. Let $F \colon \DCAT^b_{\COH}(X) \to \mathsf{T}$ be an
  $R$-linear functor of triangulated categories such that $\mathsf{T}$
  is proper over $R$ (i.e.,
  $\oplus_{n\in \Z} \Hom_{\mathsf{T}}(t,t'[n])$ is a finitely
  generated $R$-module for $t$, $t'\in \mathsf{T}$). Then $F$ admits a
  right adjoint $F_\rho$. If $X$ is regular, then $F$ also admits a
  left adjoint $F_\lambda$.
\end{corollary}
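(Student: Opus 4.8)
The plan is to reduce the statement to the representability of cohomological functors of finite type on $\DCAT^b_{\COH}(X)$, following the template of \cite{MR1996800} (and \cite{aoki2020quasiexcellence}), with Theorem~\ref{T:strong-generator-tame} and Theorem~\ref{T:boundedness-stacks} playing the roles that compact generation and coherent boundedness play in the scheme case. First recall that $F$ admits a right adjoint $F_\rho$ exactly when, for every $t\in\mathsf{T}$, the contravariant functor $h_t:=\Hom_{\mathsf{T}}(F(-),t)\colon\DCAT^b_{\COH}(X)^{\mathrm{op}}\to\mathsf{Mod}(R)$ is representable, and $F$ admits a left adjoint $F_\lambda$ exactly when each covariant functor $\Hom_{\mathsf{T}}(t,F(-))$ is representable. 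Since $F$ is $R$-linear and triangulated and $\mathsf{T}$ is proper over $R$, for each $a\in\DCAT^b_{\COH}(X)$ the graded module $\bigoplus_n h_t(a[n])\cong\bigoplus_n\Hom_{\mathsf{T}}(F(a),t[n])$ is a finitely generated $R$-module; that is, $h_t$ and its covariant analogue are cohomological functors of finite type over $R$. I also record that $p\colon X\to\spec R$ is proper and tame, so (as recalled at the start of \S\ref{S:finite}) $\RDERF p_{\qcsubscript,*}$ preserves $\DCAT^+_{\COH}$ and $\DCAT^b_{\COH}$; hence $\RHom_{\Orb_X}(G,M)=\RDERF p_{\qcsubscript,*}\SRHom_{\Orb_X}(G,M)$ has finitely generated cohomology over the noetherian ring $R$ for all $G,M\in\DCAT^b_{\COH}(X)$, and since $X\to\spec R$ is of finite presentation with finite diagonal, Theorem~\ref{T:boundedness-stacks} is available.

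When $X$ is regular, $\DCAT^b_{\COH}(X)=\PERF(X)=\DQCOH(X)^c$ and $\SRHom_{\Orb_X}(a,b)\in\DCAT^b_{\COH}(X)$ for all $a,b$; thus $\RDERF p_{\qcsubscript,*}\SRHom_{\Orb_X}(a,b)\in\DCAT^b_{\COH}(R)$ and $\bigoplus_n\Hom_{\Orb_X}(a,b[n])$ is finitely generated over $R$. So in this case $\DCAT^b_{\COH}(X)$ is $\mathrm{Ext}$-finite over $R$ and, by Theorem~\ref{T:strong-generator-tame}, has a strong generator; hence it is \emph{saturated} in the sense of \cite{MR1996800}, so that every cohomological functor of finite type over $R$ --- covariant or contravariant --- is representable by an object of $\DCAT^b_{\COH}(X)$. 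Applying this to $h_t$ and to $\Hom_{\mathsf{T}}(t,F(-))$ would produce both $F_\rho$ and $F_\lambda$. (Equivalently, the duality $(-)^\vee=\SRHom_{\Orb_X}(-,\Orb_X)$ identifies $\DCAT^b_{\COH}(X)^{\mathrm{op}}$ with $\DCAT^b_{\COH}(X)$ and converts the left-adjoint problem for $F$ into the right-adjoint problem for $F((-)^\vee)\colon\DCAT^b_{\COH}(X)\to\mathsf{T}^{\mathrm{op}}$, with $\mathsf{T}^{\mathrm{op}}$ again proper over $R$.)

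For general proper and tame $X$ only the right adjoint is asserted, and now $\DCAT^b_{\COH}(X)$ need not be $\mathrm{Ext}$-finite over $R$. I would proceed as follows. Because $X$ is a noetherian tame stack it has quasi-finite separated diagonal, so $\DQCOH(X)$ is compactly generated with $\DQCOH(X)^c=\PERF(X)$ \cite[Thm.~A]{perfect_complexes_stacks}. Restricting $h_t$ to $\PERF(X)$ and applying Brown representability \cite[Lem.~2.14]{MR1867248} yields $M_t\in\DQCOH(X)$ with $h_t(P)\simeq\Hom_{\Orb_X}(P,M_t)$ for all $P\in\PERF(X)$; the finite-type hypothesis forces $\RHom_{\Orb_X}(G,M_t)\in\DCAT^b_{\COH}(R)$ for all $G\in\DQCOH(X)^c$, so Theorem~\ref{T:boundedness-stacks} gives $M_t\in\DCAT^b_{\COH}(X)$, exactly as in Corollary~\ref{C:saturated}. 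It then remains to upgrade the isomorphism $\Hom_{\Orb_X}(-,M_t)\simeq h_t$ from perfect test objects to all of $\DCAT^b_{\COH}(X)$; this is where the strong generator $G$ of Theorem~\ref{T:strong-generator-tame} is indispensable. Writing $\DCAT^b_{\COH}(X)=\thick{G}_n$, I would run the Postnikov-tower (``ghost-killing'') construction of \cite{MR1996800,aoki2020quasiexcellence} with the test objects $G[i]$: the key points are that each $\Hom_{\Orb_X}(G[i],-)$ carries bounded coherent complexes to finitely generated $R$-modules (by the coherence of $\RDERF p_{\qcsubscript,*}$ noted above), that only finitely many shifts $G[i]$ contribute to $h_t$ since $h_t$ has finite type over $R$, and that the construction terminates after at most $n$ steps because $\DCAT^b_{\COH}(X)=\thick{G}_n$, Theorem~\ref{T:boundedness-stacks} ensuring the resulting object stays in $\DCAT^b_{\COH}(X)$.

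The hard part will be precisely this last upgrade: compact generation only represents $h_t$ against \emph{perfect} test objects, whereas for singular $X$ the category $\DCAT^b_{\COH}(X)$ is strictly larger than $\PERF(X)$ and is not $\mathrm{Ext}$-finite over $R$, so the object furnished by Brown representability is not a priori a genuine right adjoint. The strong generator of Theorem~\ref{T:strong-generator-tame} is exactly the device that replaces compact generation here, and the substantive work is in carrying out the Bondal--Van den Bergh construction with $R$-coefficients and with the coherence and boundedness control supplied by Theorem~\ref{T:boundedness-stacks}.
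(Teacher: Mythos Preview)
Your approach is essentially correct but more circuitous than the paper's. The paper proceeds in one line: since $\DCAT^b_{\COH}(X)$ has a strong generator (Theorem~\ref{T:strong-generator-tame}), Rouquier's representability criterion \cite[Cor.~4.18]{MR2434186} applies directly to the finite-type cohomological functor $h_t=\Hom_{\mathsf{T}}(F(-),t)$ and produces $F_\rho(t)\in\DCAT^b_{\COH}(X)$; no Ext-finiteness of the whole category is required, so the singular and regular cases are handled uniformly. For the left adjoint the paper uses exactly the duality trick you mention parenthetically: when $X$ is regular, apply the right-adjoint argument to $\Hom_{\mathsf{T}}(t,F((-)^\vee))$ and then dualize.

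Your detour through Brown representability on $\DQCOH(X)$ and Theorem~\ref{T:boundedness-stacks} is redundant. It yields an $M_t\in\DCAT^b_{\COH}(X)$ representing $h_t$ only against \emph{perfect} test objects, and there is no natural transformation $h_t\Rightarrow\Hom_{\Orb_X}(-,M_t)$ on the larger category to upgrade; you correctly recognize this and then propose to run the Postnikov/ghost-killing construction with the strong generator $G$ instead. But that construction produces its own representing object from scratch and makes no use of $M_t$, so steps (1)--(2) of your general-$X$ argument can simply be deleted. What remains is precisely the content of \cite[Cor.~4.18]{MR2434186} (which, unlike the original \cite[Thm.~1.3]{MR1996800} stated over a field, works over a noetherian base and does not need $\DCAT^b_{\COH}(X)$ to be Ext-finite). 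Citing it directly is both shorter and avoids reproving it.
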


\begin{proof}[Proof of Corollary \ref{C:adjoints-tame}]
  Fix $t\in \mathsf{T}$. Then Theorem \ref{T:strong-generator-tame}
  and \cite[Cor.~4.18]{MR2434186} implies that the functor
  \[
    \Hom_{\mathsf{T}}(F(-),t) \colon \DCAT^b_{\COH}(X) \to \MOD(R)
  \]
  is representable by some $F_\rho(t) \in \DCAT^b_{\COH}(X)$. The
  assignment $t \mapsto F_\rho(t)$ defines a right adjoint to $F$. For
  the left adjoint: $X$ is regular so $\DCAT^b_{\COH}(X) = \PERF(X)$
  and every object is dualizable. In particular, we see that the functor 
  \[
    \Hom_{\mathsf{T}}(t,F((-)^\vee)) \colon
    \DCAT^b_{\COH}(X)^{\opp} \to \MOD(R)
  \]
  is representable by some $\tilde{t} \in \DCAT^b_{\COH}(X)$. Hence, if $x\in \DCAT^b_{\COH}(X)$, then
  \[
    \Hom_{\mathsf{T}}(t,F(x)) \simeq
    \Hom_{\mathsf{T}}(t,F(x^{\vee\vee})) \simeq \Hom_{\Orb_X}(x^\vee,
    \tilde{t}) \simeq \Hom_{\Orb_X}(\tilde{t}^\vee, x).
  \]
  The assignment $t\mapsto \tilde{t}^\vee$ gives the left adjoint.
\end{proof}

\bibliography{bibtex_db/references}
\bibliographystyle{bibtex_db/dary}
\end{document}